\newtheorem{thm}{Theorem}[section]
\newtheorem{prop}[thm]{Proposition}
\newtheorem{lem}[thm]{Lemma}
\newtheorem{cor}[thm]{Corollary}
\newtheorem{fact}[thm]{Fact}
\newtheorem{quest}[thm]{Question}
\newtheorem{conj}[thm]{Conjecture}
\theoremstyle{definition}
\newtheorem{defn}[thm]{Definition}
\theoremstyle{remark}
\newcommand{\Rb}{\mathbb{R}}
\newcommand{\Nb}{\mathbb{N}}
\newcommand{\Zb}{\mathbb{Z}}
\newcommand{\Qb}{\mathbb{Q}}
\newcommand{\Rc}{\mathcal{R}}
\newcommand{\Cb}{\mathbb{C}}
\makeatletter \DeclareRobustCommand{\cset}{\@ifstar\star@cset\normal@cset}
\newcommand{\star@cset}[1]{{\left\llbracket#1\right\rrbracket}}
\newcommand{\normal@cset}[2][]{{\mathopen{#1\llbracket}#2\mathclose{#1\rrbracket}}}
\newcommand{\xbar}{\bar{x}}
\newcommand{\ybar}{\bar{y}}
\newcommand{\zbar}{\bar{z}}
\newcommand{\kbar}{\bar{k}}
\newcommand{\ellbar}{\bar{\ell}}
\newcommand{\mbar}{\bar{m}}
\newcommand{\nbar}{\bar{n}}
\newcommand{\IZF}{\ifmmode\mathsf{IZF}\else$\mathsf{IZF}$\fi}
\newcommand{\CZF}{\ifmmode\mathsf{CZF}\else$\mathsf{CZF}$\fi}
\newcommand{\ZF}{\ifmmode\mathsf{ZF}\else$\mathsf{ZF}$\fi}
\newcommand{\ZFC}{\ifmmode\mathsf{ZFC}\else$\mathsf{ZFC}$\fi}
\newcommand{\AC}{\ifmmode\mathsf{AC}\else$\mathsf{AC}$\fi}
\newcommand{\AD}{\ifmmode\mathsf{AC}\else$\mathsf{AD}$\fi}
\newcommand{\BZ}{\ifmmode\mathsf{BZ}\else$\mathsf{BZ}$\fi}
\newcommand{\GCH}{\ifmmode\mathsf{GCH}\else$\mathsf{GCH}$\fi}
\newcommand{\PA}{\ifmmode\mathsf{PA}\else$\mathsf{PA}$\fi}
\newcommand{\DC}{\ifmmode\mathsf{DC}\else$\mathsf{DC}$\fi}
\newcommand{\MP}{\ifmmode\mathsf{MP}\else$\mathsf{MP}$\fi}
\newcommand{\CT}{\ifmmode\mathsf{CT}\else$\mathsf{CT}$\fi}
\newcommand{\PAx}{\ifmmode\mathsf{PAx}\else$\mathsf{PAx}$\fi}
\newcommand{\VL}{\ifmmode{\mathsf{V}=\mathsf{L}}\else$\mathsf{V}=\mathsf{L}$\fi}
\DeclareMathOperator{\Ord}{Ord}
\newcommand{\Set}{\mathrm{Set}}
\newcommand{\LL}{L}
\newcommand{\VV}{V}
\newcommand{\olR}{\overline{\Rb}}
\newcommand{\bDelta}{\undertilde{\mathbf{\Delta}}} \newcommand{\bPi}{\undertilde{\mathbf{\Pi}}}
\newcommand{\bSigma}{\undertilde{\mathbf{\Sigma}}}
\newcommand{\lDelta}{\Delta}
\newcommand{\lPi}{\Pi}
\newcommand{\lSigma}{\Sigma}
\DeclareMathOperator*{\qqq}{qqq}
\newlength{\savedparindent}
\newcommand{\Lusin}{Lusin}
\def\ZFCModelFigure{
  \includegraphics{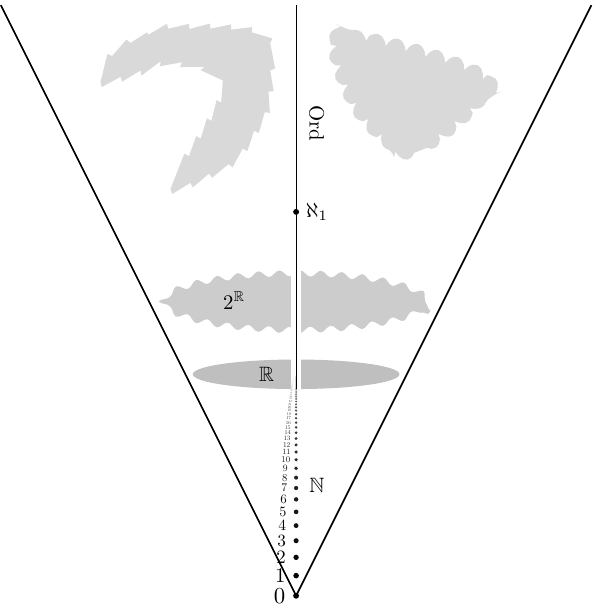}
}
\def\GodelLFigure{
  \includegraphics{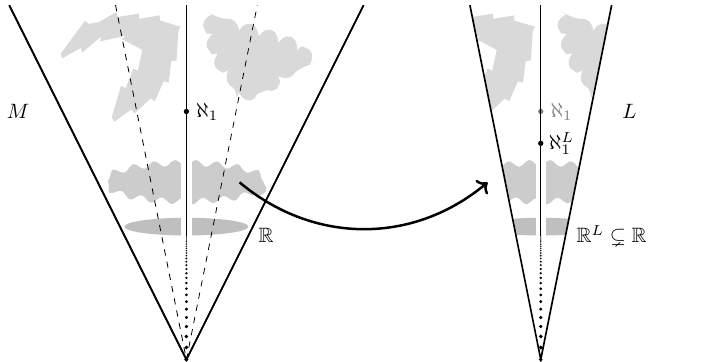}
}
\def\InaccFigure{
  \includegraphics{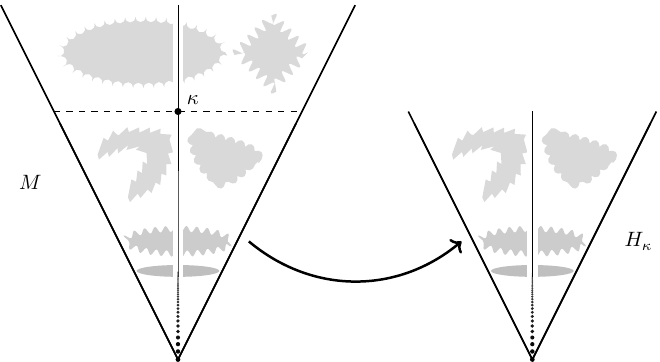}
}
\def\SolovayInaccFigure{
  \includegraphics{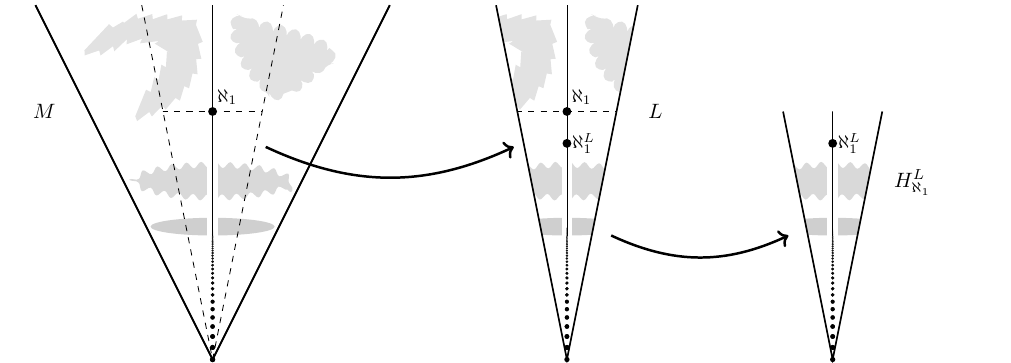}
}
\def\LRFigure{
  \includegraphics{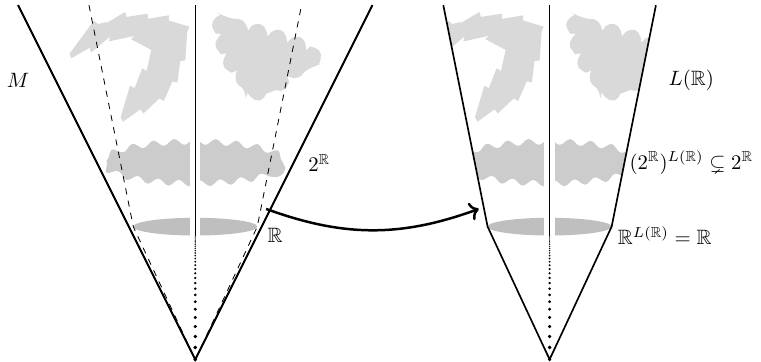}
}
\begin{document}

\title{Any function I can actually write down is measurable, right?} 

\address{Department of Mathematics \\
  Iowa State University \\
  396 Carver Hall \\
  411 Morrill Road \\
  Ames, IA 50011, USA}
\author{James E. Hanson}
\email{jameseh@iastate.edu}
\date{\today}

\keywords{independence results, inner model theory, effective descriptive set theory, non-measurable functions, universal Diophantine equations, large cardinals, o-minimality, paradoxical decompositions}
\subjclass[2020]{Primary 03-02, 03E35, 28A05, Secondary 03E45, 03E15, 03E55, 03C64}

\begin{abstract}
  In this expository paper aimed at a general mathematical audience, %
  we discuss how to combine certain classic theorems of set-theoretic inner model theory and effective descriptive set theory with work on Hilbert's tenth problem and universal Diophantine equations to produce the following surprising result: There is a specific polynomial $p(x,y,z,n,k_1,\dots,k_{70})$ of degree $7$ with integer coefficients such that it is independent of \ZFC\ (and much stronger theories) 
  whether the function
  \[
    f(x) = \inf_{y \in \Rb}\sup_{z \in \Rb}\inf_{n \in \Nb}\sup_{\kbar \in \Nb^{70}}p(x,y,z,n,\kbar)
  \]
  is Lebesgue measurable. %
  We also give similarly defined $g(x,y)$ with the property that the statement ``$x \mapsto g(x,r)$ is measurable for every $r \in \Rb$'' has large cardinal consistency strength (and in particular implies the consistency of \ZFC) and $h(m,x,y,z)$ such that $h(1,x,y,z),\allowbreak\dots,\allowbreak h(16,x,y,z)$ can consistently be the indicator functions of a Banach--Tarski paradoxical decomposition of the sphere. %

  Finally, we discuss some situations in which measurability of analogously defined functions can be concluded by inspection, which touches on %
 model-theoretic o-minimality and the fact that sufficiently strong large cardinal hypotheses (such as Vop\v{e}nka's principle and much weaker assumptions) imply that all `reasonably definable' functions (including the above $f(x)$, $g(x,y)$, and $h(m,x,y,z)$) are universally measurable.

\end{abstract}

\maketitle

\section*{Introduction}

While most students of mathematics encounter measure theory at some point, the basic machinery of measure theory (such as $\sigma$-algebras) is often regarded as fussy and irrelevant in practice. In particular, the issue of measurability of sets and functions seems to be something one \emph{only} encounters in the first semester of graduate real analysis and never again. The common informal explanation of this is that any function one can `actually write down' will be measurable and that one needs to go looking for trouble using the axiom of choice in order to find non-measurable objects. This is certainly a true statement empirically and one would like it to be true on a moral level, but there is nevertheless a sense in which it is technically false and we will be exploring the extent to which this statement is both in some sense false and in some sense true here.

On a purely mathematical level, the main contributions of this paper are Theorems~\ref{thm:polynomial-Vitali}, \ref{thm:polynomial-inacc}, and \ref{thm:Polynomial-Banach-Tarski}. Each of these is a statement of the same form: ``There is a degree $7$ polynomial $p$ with integer coefficients and fewer than $100$ variables such that writing some $\inf$'s and $\sup$'s over $\Nb$ and $\Rb$ in front of $p$ defines a function whose measurability is wildly independent of \ZFC.'' More specifically we have the following:
\begin{itemize}
\item (\cref{thm:polynomial-Vitali}) There is a degree $7$ polynomial $p(x,y,z,n,k_1,\dots,k_{70})$ with integer coefficients such that it is independent of \ZFC{} whether
  \[
    f(x) = \inf_{y \in \Rb}\sup_{z \in \Rb}\inf_{n \in \Nb}\sup_{\kbar \in \Nb^{70}}p(x,y,z,n,\kbar)
  \]
  is Lebesgue measurable.
\item (\cref{thm:polynomial-inacc}) There is a degree $7$ polynomial $p(x,y,z,w,t,n,k_1,\dots,k_{74})$ with integer coefficients such that the function
  \[
    g(x,y) = \sup_{z \in \Rb} \inf_{w \in \Rb}\sup_{t \in \Rb}\inf_{n \in \Nb}\sup_{\kbar \in \Nb^{74}}p(x,y,z,w,t,n,\kbar)
  \]
  has the property that the statement ``$g(x,r)$ is Lebesgue measurable for every $r \in \Rb$'' has the same consistency strength as the existence of an inaccessible cardinal (i.e., a Grothendieck universe).
\item (\cref{thm:Polynomial-Banach-Tarski}) There is a degree $7$ polynomial $p(m,x,y,z,w,t,n,k_1,\dots,k_{76})$ with integer coefficients such that if we define
  \[
    h(m,x,y,z) = \inf_{w \in \Rb} \sup_{t \in \Rb} \inf_{n \in \Nb} \sup_{\kbar \in \Nb^{76}}p(m,x,y,z,w,t,n,\kbar),
  \]
 then it is independent of \ZFC{} whether $h(1,x,y,z),\dots,h(16,x,y,z)$ are the indicator functions of a Banach--Tarski paradoxical decomposition of the unit sphere.
\end{itemize}
Although these results, to my\footnote{We will occasionally use the terms \emph{I}, \emph{me}, and \emph{my} as convenient shorthand for `the author' or `the author's,' as appropriate.} knowledge, have never been pointed out explicitly before, they are not morally new. \cref{thm:polynomial-inacc} could have been written down in 1984 as a corollary of Shelah's work showing that you `can't take Solovay's inaccessible away' \cite{Shelah1984}. If we forget about the specific numbers of variables given, Theorems~\ref{thm:polynomial-Vitali} and \ref{thm:Polynomial-Banach-Tarski} could have been written down in 1970, when Matiyasevich (building on earlier work of Robinson, Davis, and Putnam) solved Hilbert's tenth problem negatively \cite{Matiyasevich_1970}. If we moreover allow the polynomials to take some exponential functions as arguments (and if we only demand consistency with \ZFC{} rather than independence), Theorems~\ref{thm:polynomial-Vitali} and \ref{thm:Polynomial-Banach-Tarski} could have been written down in 1960, when Robinson, Davis, and Putnam showed that every computably enumerable set can be realized as an exponential Diophantine set \cite{Davis_1961}.\footnote{With a time machine and a copy of Green and Tao's 2008 paper \cite{Green_2008}, this could be pushed back to 1959, when Davis and Putnam proved this under the assumption of the existence of arbitrarily long arithmetic sequences of prime numbers.} If we allow ourselves to replace polynomials with arbitrary computable functions, these two results could have been written down at some point in the 50s, when the field of effective descriptive set theory was born out of the work of Kleene, Mostowski, and Addison. And finally if we just allow $p$ to be an arbitrary continuous function, then \cref{thm:polynomial-Vitali} was essentially written down in 1938 by G\"odel in \cite{G_del_1938}. So it has been known for somewhere between 40 and 90 years that non-measurable objects can consistently be quite explicit, but despite how old these results are, it seems to me that not many mathematicians are aware of just how explicit these explicit objects can be.

Most mathematicians are aware of some of the more famous independence results, but aside from possibly a broad sketch of the proof of G\"odel's incompleteness theorem and maybe the word `forcing,' very few know much about how you actually \emph{prove} that you can't prove something one way or another from some given axioms. Rather than just give the proofs of Theorems~\ref{thm:polynomial-Vitali}, \ref{thm:polynomial-inacc}, and \ref{thm:Polynomial-Banach-Tarski}, I'm going to try to tell the story of how one would even begin to prove such things, in part because the method of proof is entirely different from G\"odelian self-reference and set-theoretic forcing, the two most well-known techniques for proving independence results, and in part because Theorems~\ref{thm:polynomial-Vitali}, \ref{thm:polynomial-inacc}, and \ref{thm:Polynomial-Banach-Tarski} draw on several different important ideas from mathematical logic in the 20th century. Sections~\ref{sec:indep-results}-\ref{sec:construction} of this paper essentially follow the events described in the previous paragraph in roughly chronological order. \cref{sec:indep-results} discusses independence results generally and then describes G\"odel's inner model $L$, one of the core ideas in this paper and in set theory more broadly, in which one is able to well-order the reals in an `explicit' way. \cref{sec:eff-desc-set-thy} first deals with descriptive set theory, the study of the complexity of sets of real numbers that came out of early work in measure theory. \cref{sec:eff-desc-set-thy} then discusses effective descriptive set theory, an extension of descriptive set theory that highlights analogies and connections between it and computability theory. \cref{sec:explicitly-choosing} connects the ideas of the first two sections and in doing so allows us to upgrade our `explicit' well-ordering of $\Rb$ to a `computable' well-ordering of $\Rb$. \cref{sec:construction} brings this all together with the famous negative resolution of Hilbert's tenth problem in order to yield our main results, before discussing what would be involved in actually writing down the relevant polynomials (which we do not do).

The other side of the story---i.e., how it can be that objects like the above $f(x)$, $g(x,y)$, and $h(m,x,y,z)$ exist when in practice one seems to `need the axiom of choice' to build non-measurable functions or sets---appears to some extent in the earlier sections but comes into focus in \cref{sec:obviously}. A fair amount of work has been done by set theorists to explain this, such as Solovay's famous construction (from an inaccessible cardinal) of a model of $\ZF + \DC$ in which all subsets of $\Rb$ are measurable \cite{Solovay1970}. In \cref{sec:obviously} we discuss this from the point of view of functions of a similar form to those appearing in Theorems~\ref{thm:polynomial-Vitali}, \ref{thm:polynomial-inacc}, and \ref{thm:Polynomial-Banach-Tarski}. Using work of Suslin and Lusin from classical descriptive set theory and work of van den Dries, Knight, Pillay, and Steinhorn on model-theoretic o-minimality, we show that many similarly defined functions are provably measurable in \ZFC. We then cite Shelah and Woodin's proof that that `large cardinals imply that every reasonably definable set of reals is Lebesgue measurable' in \cite{Shelah1990} to show that in the presence of sufficiently large large cardinals, anything like Theorems~\ref{thm:polynomial-Vitali} and \ref{thm:Polynomial-Banach-Tarski} becomes impossible in that the relevant functions are now provably measurable.

Most of the notation we'll use here is standard. To avoid collision with the notation for open intervals, we'll use angle brackets for ordered tuples such as $\langle 1,2 \rangle$. We will also be using the logician's convention of representing tuples of indexed variables with a bar. For instance, $\xbar$ might represent $\langle x_1,x_2,x_3 \rangle$. In some places these notations will be used together. For instance, if $\xbar = \langle x_1,x_2,x_3\rangle$, then $\langle \xbar,n \rangle$ is the same thing as $\langle x_1,x_2,x_3,n \rangle$. We'll be representing projection maps with $\pi$, usually with a subscript. For example, an expression like $\pi_4$ is meant to represent a projection from some product $X \times Y \times Z \times W$ down to $X \times Y \times Z$, but this will be spelled out explicitly when it is used. Finally, we are taking $0$ to be a natural number and all suprema and infima are understood to be computed in the extended real numbers, which we will denote by $\olR$. %

I would like to thank Sean Cody, Elliot Glazer, Jem Lord, Jayde Massmann, Yuval Paz, and Connor Watson for helping me to fake a working understanding of modern set theory. I would also like to thank Michael Barz, Matt Broe, Kene Ezi, Huang Xu, Connor Lane, Shyam Ravishankar, and many others for helping me to pretend to be somewhat fluent in other various parts of mathematics (e.g., category theory, Galois theory, and real analysis). 

%

%

%
%

%
%

%

%
%


\section{Independence results and G\"odel's $L$}
\label{sec:indep-results}

A major aspect of set theory research is independence results, proving that certain statements can neither be proven nor refuted using certain axiomatic systems. Of course one of the most famous theorems of the 20th century, G\"odel's incompleteness theorem, is of this form.\footnote{Although the incompleteness theorems are not uniquely set-theoretic in nature, as they apply equally well to Peano arithmetic and other relatively weak formal systems.} In some sense, though, it is a little unfortunate that the incompleteness theorem is the most famous independence result, as it is a fairly subtle result to interpret semantically (involving models with non-standard natural numbers which `appear finite' from within the model but are externally infinite and may code infinitely long, secretly unsound proofs). Set-theoretic forcing, originally introduced by Cohen to establish the independence of the axiom of choice from \ZF\ and the continuum hypothesis from \ZFC, is likewise somewhat difficult to understand, even given the well-documented relationship between some aspects of forcing and the internal logic of certain kinds of sheaf toposes. %
The idea of `generating an entirely new real number out of thin air' is a little hard to get accustomed to (as is the idea of the `space of surjections from $\Nb$ onto $\Rb$ which has no points but is nevertheless non-empty').

Given the fact that G\"odelian incompleteness and forcing are the two most famous varieties of independence proofs, it's reasonable that these kinds of results have a certain mystique among most mathematicians. In my mind however, this is something of a shame because there are less well-known set-theoretic independence proofs that are, I would argue, more conceptually accessible than the two most famous ones. In fact, on some level, there are many `independence phenomena' that are far more familiar to most mathematicians but are not usually conceived of as `independence results' per se. Consider, for example, the following suggestively phrased proposition.

\begin{prop}\label{prop:Abel-indep}
  The abelianity hypothesis is independent of $\mathsf{GRP}$. %
\end{prop}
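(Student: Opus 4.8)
The plan is to invoke the soundness and completeness theorem for first-order logic, which reduces unprovability to the existence of models. Recall that a sentence $\varphi$ is provable from a theory $T$ precisely when $\varphi$ holds in every model of $T$; so, writing $\varphi_{\mathrm{ab}}$ for the abelianity hypothesis $\forall x\,\forall y\,(xy = yx)$, it suffices to produce one model of $\mathsf{GRP}$ satisfying $\varphi_{\mathrm{ab}}$ and another model of $\mathsf{GRP}$ satisfying $\neg\varphi_{\mathrm{ab}}$: the first witnesses that $\neg\varphi_{\mathrm{ab}}$ is not provable from $\mathsf{GRP}$, and the second witnesses that $\varphi_{\mathrm{ab}}$ is not provable from $\mathsf{GRP}$.

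First I would exhibit an abelian group, for instance the two-element group $\Zb/2\Zb$ (or even the trivial group), which visibly satisfies every axiom of $\mathsf{GRP}$ together with $\varphi_{\mathrm{ab}}$. As a byproduct, the mere existence of this model shows that $\mathsf{GRP}$ is consistent, which is what makes the independence claim non-vacuous. Next I would exhibit a nonabelian group, the smallest being the symmetric group $S_3$ on three letters, in which one checks directly that the transpositions $(1\,2)$ and $(1\,3)$ fail to commute; this is a model of $\mathsf{GRP} + \neg\varphi_{\mathrm{ab}}$. Combining the two halves gives the independence of $\varphi_{\mathrm{ab}}$ from $\mathsf{GRP}$.

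There is essentially no obstacle here — the argument is deliberately elementary. The only point requiring a modicum of care is the direction in which the completeness theorem is applied: unprovability of a sentence $\psi$ from $T$ follows from a model of $T$ in which $\psi$ \emph{fails}, not from one in which it holds. The real purpose of stating the proposition this way is that this two-model template — exhibit a model of the theory satisfying the statement, and another satisfying its negation — is exactly the shape of the more elaborate arguments to come, with $\ZFC$ playing the role of $\mathsf{GRP}$ and with $L$-style inner models and forcing extensions playing the roles of $\Zb/2\Zb$ and $S_3$.
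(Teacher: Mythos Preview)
Your proof is correct and follows the same approach as the paper: exhibit an abelian group and a nonabelian group (the paper uses $\Zb/6\Zb$ and $S_3$, you use $\Zb/2\Zb$ and $S_3$) and invoke soundness. The paper cites only soundness rather than soundness-and-completeness, since that is the only direction needed, but this is a cosmetic difference.
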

\begin{proof}
  $\Zb/6\Zb$ is a model of $\mathsf{GRP}$ that satisfies the abelianity hypothesis and $S_3$ is a model that does not. Therefore, by the soundness theorem for first-order logic, there can be no proof of either the abelianity hypothesis or its negation from the axioms of $\mathsf{GRP}$. %
\end{proof}

With the majority of set-theoretic independence phenomena, the very basic core argument is the same. %
One exhibits two models of something like \ZF\ or \ZFC, a model satisfying a given statement and another model failing to satisfy that statement. The primary difference is that models of \ZFC\ are large and rich enough to encode the majority of mathematics.

While models of set theory are obviously far harder to visualize than, say, groups with $6$ elements, sometimes the relative construction of these models can be visualized fairly clearly. Like many mathematicians, set theorists' intuition is often founded on cartoonishly simple mental pictures, such as the standard drawing of a model of \ZFC\ (see Figure~\ref{fig:ZFC-model}). A model of \ZFC\ is somewhat like a great tree, rooted at the empty set and with the ordinals forming a transfinitely tall trunk off of which the various sets coding familiar mathematical objects branch. This picture immediately gives two coordinates which describe the broad structure of the majority of constructions of one model of $\ZF(\mathsf{C})$ from another: One can imagine a model extending another in height, leading to the idea of large cardinals, or one can imagine a model that extends another in width, as is the case in set-theoretic forcing, or a model which is the result of pruning another to some narrower width, which is the very basic idea of \emph{inner models} such as G\"odel's $L$. Many more elaborate proofs involve performing several (or even infinitely many) of these constructions in succession, such as in Cohen's original proof of the independence of choice from \ZF, which involves forcing to widen a model and then pruning down to a narrower symmetric submodel (a kind of inner model specialized for the context of forcing) to kill the axiom of choice by carefully cutting certain choice functions out of the model.
\begin{figure}
  \centering
  \ZFCModelFigure
  \caption{A model $M$ of $\mathsf{ZF}(\mathsf{C})$ (not to scale).} %
  \label{fig:ZFC-model}
\end{figure}

\subsection{The easiest independence result}

The easiest of these pictures to explain in a moderately honest way is shortening. Cutting a model of \ZFC\ off at some arbitrary height will typically not result in another model of \ZFC\ (for instance if we chop the model in Figure~\ref{fig:ZFC-model} off between $\Rb$ and $2^\Rb$, the resulting structure fails to satisfy the power set axiom since there will literally be no sets of subsets of $\Rb$ as elements left). In order for the remainder to be a model of \ZFC, the height at which we cut needs to be special, specifically being what's called a `wordly' cardinal. For the sake of continuity with the rest of this paper though, we'll focus on the stronger condition of \emph{inaccessibility}.

Recall that two sets $A$ and $B$ have the same \emph{cardinality} if there is a bijection between them. The axiom of choice is equivalent to the statement that every set has the same cardinality as some ordinal. Since the ordinals are well-ordered, there is a unique least ordinal of any given cardinality, which in set theory is conventionally identified with the cardinality itself. For instance, $\aleph_0$, the cardinality of countable sets, is represented by the first infinite ordinal, and $\aleph_1$, the smallest uncountable cardinal, is the first uncountable ordinal (i.e., the first ordinal that does not admit a bijection with $\Nb$).\footnote{We should note that there is distinct notation in set theory for ordinals and cardinals. For instance, $\aleph_1$ is meant to indicate the least uncountable cardinal and $\omega_1$ is meant to indicate the least uncountable ordinal, even though officially they're the same object. This distinction matters in terms of signaling intentionality but also in the interpretation of arithmetic operations. $\aleph_1 + 1$ is $\aleph_1$, since addition here is being interpreted in the sense of cardinality, but $\omega_1 < \omega_1+1$, because addition here is interpreted as addition of ordinals. To keep the exposition in this paper light, however, we will not be using any ordinal-specific notation.} The cardinality of a set $A$ (i.e., the smallest ordinal admitting a bijection with $A$) is typically written $|A|$.

\begin{defn}
  An uncountable cardinal $\kappa$ is \emph{inaccessible} if $2^\lambda < \kappa$ for any $\lambda < \kappa$ and for any family $(X_i)_{i \in I}$ of sets with $|X_i| < \kappa$ for each $i$ and $|I| < \kappa$, we have that $|\bigcup_{i \in I}X_i| < \kappa$.
\end{defn}

Inaccessibility of $\kappa$ is equivalent to saying that the category of sets of cardinality less than $\kappa$ is (equivalent to) a Grothendieck universe (i.e., a small category which resembles the full category of sets strongly). Note that the two conditions listed essentially say that the collection of sets smaller than $\kappa$ satisfies the power set axiom (since $\lambda < \kappa$ entails $ 2^\lambda < \kappa$) and a strong form of replacement (since any coproduct of sets of size smaller than $\kappa$ indexed by a set of size smaller than $\kappa$ is a set of size smaller than $\kappa$). Power set and replacement are really the strongest axioms of \ZFC\ and are generally the hardest to arrange (especially together) when trying to build a model.

\begin{defn}
  Given a model $M$ of \ZFC\ and a cardinal $\kappa$ in $M$, a set $x$ is \emph{hereditarily of cardinality less than $\kappa$} if $x$ has fewer than $\kappa$ elements, every element of $x$ has fewer than $\kappa$ elements, every element of every element of $x$ has fewer than $\kappa$ elements, every element of every element of every element of $x$ has fewer than $\kappa$ elements, and so on. $H_\kappa^M$ is the set of elements of $M$ that are hereditarily of cardinality less than $\kappa$.  We may drop the superscript $M$ and just write $H_\kappa$ if $M$ is clear from context.
\end{defn}

For any inaccessible $\kappa \in M$, $H_\kappa$ 
is a model of \ZFC\ (see Figure~\ref{fig:inacc-card}). %
This gives us what is probably the easiest \ZFC\ independence result.

\begin{figure}
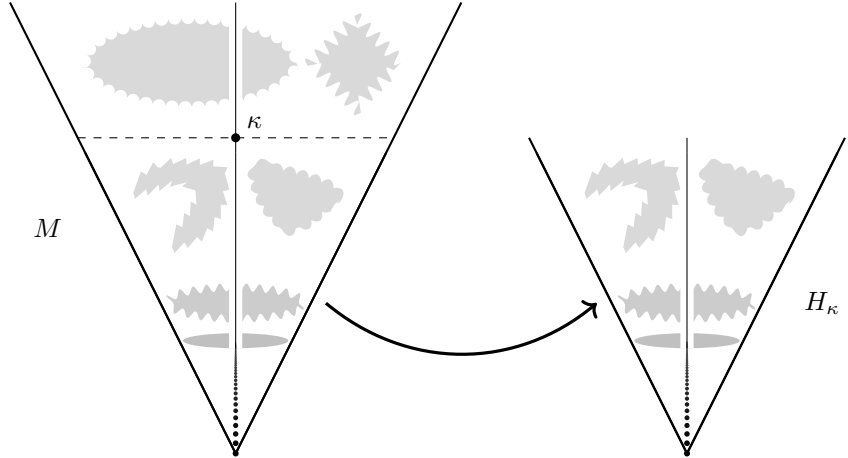

  \centering
  \InaccFigure
  \caption{An inaccessible cardinal $\kappa$ in a model $M$ of \ZFC\ and the corresponding initial segment $H_\kappa$, which is itself a model of \ZFC.}
  \label{fig:inacc-card}
\end{figure}

\begin{prop}\label{prop:basic-inacc-indep}
  If there is a model of \ZFC\ containing an inaccessible cardinal,\footnote{We needed to include this caveat because, strictly speaking, we don't know that \ZFC\ is consistent with the existence of an inaccessible cardinal. We also don't strictly speaking \emph{know} that \ZFC\ or even weaker theories are consistent. This is an important fundamental fact about metamathematics, but I do not want to dwell on it too much in this paper as it slides pretty directly into philosophical questions about the ontology of mathematics that I would guess most mathematicians don't like thinking about, and I am trying to emphasize the fact that the majority of set-theoretic independence results are more similar in spirit to \cref{prop:Abel-indep} than to G\"odelian incompleteness. %
  } then the statement ``there is an inaccessible cardinal'' is independent of \ZFC.
\end{prop}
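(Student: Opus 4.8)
The plan is to run the same two-model argument as in \cref{prop:Abel-indep}, now with models of \ZFC\ in place of groups. Fix, as hypothesized, a model $M$ of \ZFC\ containing an inaccessible cardinal.

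For the non-refutability half, essentially nothing needs to be done: $M$ itself is a model of $\ZFC + {}$``there is an inaccessible cardinal,'' so by the soundness theorem for first-order logic \ZFC\ cannot prove ``there is no inaccessible cardinal.''

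For the non-provability half, I would pass to an inner model that has been pruned below the first inaccessible. Since $M \models \ZFC$, the ordinals of $M$ are well-ordered in $M$, so $M$ has a least inaccessible cardinal $\kappa_0$. As noted just before the statement, $H_{\kappa_0}^M$ is then a model of \ZFC. The substantive claim is that $H_{\kappa_0}^M$ satisfies ``there is no inaccessible cardinal,'' and this reduces to the absoluteness of inaccessibility between $M$ and the transitive set $H_{\kappa_0}^M$: if $\lambda < \kappa_0$, then $\lambda$ together with all the data relevant to testing its inaccessibility — whether it is countable, the power sets $\mathcal{P}(\mu)$ for $\mu < \lambda$, and any map or family witnessing a failure of closure under small unions — are sets of hereditary cardinality less than $\kappa_0$; here one uses precisely that $\kappa_0$ is inaccessible in $M$, so that $2^\mu < \kappa_0$ for $\mu < \kappa_0$. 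Hence all of these objects lie in $H_{\kappa_0}^M$ and are computed there exactly as in $M$, so a cardinal below $\kappa_0$ is inaccessible from the point of view of $H_{\kappa_0}^M$ if and only if it is genuinely inaccessible in $M$. By minimality of $\kappa_0$ there are no such cardinals, and $\kappa_0$ itself (and everything above it) is not even an element of $H_{\kappa_0}^M$. Thus $H_{\kappa_0}^M \models \ZFC + {}$``there is no inaccessible cardinal,'' and by soundness \ZFC\ cannot prove ``there is an inaccessible cardinal.''

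Combining the two halves, ``there is an inaccessible cardinal'' is neither provable nor refutable from \ZFC, i.e., it is independent. The only step carrying any content is the absoluteness argument of the previous paragraph, and even that is routine once one has in hand the fact that $H_\kappa$ models \ZFC\ for $\kappa$ inaccessible; I expect the mild bookkeeping of checking that the relevant witnesses are hereditarily small to be the main, and only, obstacle. The conceptual point this section is built to stress is simply that we have produced two models of \ZFC, one on each side of the statement, in direct analogy with the group-theoretic toy example.
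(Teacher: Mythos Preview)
Your proof is correct and follows essentially the same approach as the paper's: both exhibit $M$ and $H_{\kappa_0}^M$ (for $\kappa_0$ the least inaccessible in $M$) as the two models witnessing independence. You add a more explicit absoluteness argument where the paper simply asserts that no set in $H_{\kappa_0}^M$ is a Grothendieck universe, but the core idea is identical.
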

\begin{proof}
  Fix a model $M$ of \ZFC\ with an inaccessible cardinal. Let $\kappa$ be the least inaccessible cardinal in $M$ (which exists by the fact that the cardinals are well-ordered). $H_\kappa^M$ is now a model of \ZFC\ such that no set in $H_\kappa^M$ is itself a Grothendieck universe. Therefore $H_\kappa^M$ is a model of \ZFC\ satisfying ``there are no inaccessible cardinals.'' Since we now have a model of \ZFC\ with an inaccessible cardinal and a model of \ZFC\ without an inaccessible cardinal, the statement ``there is an inaccessible cardinal'' must be independent of \ZFC.
\end{proof}

\subsection{The narrowest inner model}
In what is arguably the first major result of set-theoretic metamathematics, G\"odel famously showed that the consistency of \ZF\ implies the consistency of \ZFC\ and the (generalized) continuum hypothesis. He did this by building the first example of an inner model.%

\begin{defn}
  Given a model $M$ of \ZF, a class $N \subseteq M$ is an \emph{inner model} if $N$ contains all of the ordinals in $M$, $N$ is \emph{transitive} (i.e., satisfies that if $x \in N$ and $y \in x$, then $y \in N$), and $N$ is a model of \ZF.
\end{defn}

G\"odel showed how to build the `narrowest' inner model $L$, which can be thought of as the model of \ZF\ `generated' by the ordinals (see Figure~\ref{fig:Godels-L}), although this description is hard to make precise in a robust way. %
When it's important to indicate that we are thinking about $L$ inside a particular model, 
we typically denote this with a superscript (i.e., $L^M$ is $L$ built in $M$).

\begin{fact}[G\"odel] $ $\label{fact:Godel-L} There is a first-order formula $\varphi_L(x)$ in the language of set theory such that in any model $M$ of \ZF, $L^M \coloneq \{a \in M : M~\text{satisfies}~\varphi_L(a)\}$ is an inner model of \ZF\ satisfying the axiom of choice and the generalized continuum hypothesis. Moreover, for any inner model $N \subseteq M$, $L^N = L^M$, so in particular $L^M \subseteq N$.
\end{fact}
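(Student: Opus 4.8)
The plan is to reconstruct G\"odel's original argument: build the constructible hierarchy explicitly, extract the formula $\varphi_L$ from it, and then check each clause of the Fact in turn.

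First I would set up the hierarchy. Let $\mathrm{Def}(X)$ be the collection of subsets of $X$ that are definable by a first-order formula with parameters over the structure $(X,\in)$. The one genuine preliminary here is to formalize, by a single formula of set theory, the satisfaction relation $(X,\in)\models\psi[\bar a]$ for \emph{set-sized} structures $X$; this is unproblematic (Tarski's theorem only forbids a global truth predicate) and makes $\mathrm{Def}$ a bona fide definable class operation. Then define by transfinite recursion $L_0=\emptyset$, $L_{\alpha+1}=\mathrm{Def}(L_\alpha)$, and $L_\lambda=\bigcup_{\beta<\lambda}L_\beta$ at limits, and take $\varphi_L(x)$ to be ``$\exists\alpha\in\Ord\ (x\in L_\alpha)$'', so that $L^M=\{a\in M:M\models\varphi_L(a)\}$. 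An induction on $\alpha$ shows each $L_\alpha$ is transitive, that $\alpha\mapsto L_\alpha$ is $\subseteq$-increasing, and that $\alpha\in L_{\alpha+1}$; hence $L^M$ is transitive and contains every ordinal of $M$.

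Next I would verify $L^M\models\ZF$. Extensionality and Foundation are automatic from transitivity; Pairing, Union, and Infinity are easy closure properties of the hierarchy; Separation and Replacement follow from the Reflection Theorem applied within the $L$-hierarchy, since any instance can be checked inside a sufficiently tall $L_\alpha$, which is itself a member of $L$. The remaining axiom, Power Set, together with $\GCH$ and the minimality clause, all rest on the two load-bearing lemmas: \emph{absoluteness} --- the formula ``$y=L_\alpha$'' is $\Delta_1$ and hence absolute for transitive models of a suitable finite fragment of $\ZF$, so in particular $L$ computes its own $L_\alpha$ correctly and $L\models V=L$ --- and the \emph{Condensation Lemma} --- if $X\prec L_\alpha$ then the Mostowski collapse of $X$ is exactly $L_\beta$ for some $\beta\le\alpha$, which falls out of absoluteness by collapsing $X$ to a transitive model of $V=L$ and reading off its rank. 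Power Set in $L$ then follows because Condensation together with Replacement bounds, for each $x\in L$, the ranks of the constructible subsets of $x$.

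Then the two remaining properties. For $\AC$: build a definable global well-order $<_L$ of $L$ by recursion on $\alpha$, well-ordering $L_{\alpha+1}\setminus L_\alpha$ via a fixed well-ordering of (G\"odel numbers of formulas) $\times$ (finite tuples of $<_L$-earlier parameters) and concatenating across stages; this well-orders $L$, so $L\models\AC$. For $\GCH$: given $A\subseteq\kappa$ with $A\in L$, choose $\alpha$ with $A\in L_\alpha$ and an elementary substructure $X\prec L_\alpha$ with $\kappa\cup\{A\}\subseteq X$ and $|X|=|\kappa|$; by Condensation $X$ collapses onto some $L_\beta$ with $|\beta|=|\kappa|$, so $\beta<\kappa^+$, and $A$ is fixed by the collapse, whence $\mathcal{P}(\kappa)\cap L\subseteq L_{\kappa^+}$ and $|\mathcal{P}(\kappa)\cap L|\le|L_{\kappa^+}|=\kappa^+$. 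Finally, minimality: an inner model $N\subseteq M$ is transitive, has the same ordinals as $M$, and satisfies enough of $\ZF$, so absoluteness gives $L_\alpha^N=L_\alpha^M$ for every ordinal $\alpha$; hence $L^N=\bigcup_\alpha L_\alpha^N=\bigcup_\alpha L_\alpha^M=L^M$, and since $L^N\subseteq N$ by construction, $L^M\subseteq N$. The step I expect to be the main obstacle is the absoluteness bookkeeping that powers the last two lemmas: pinning down the finite fragment of $\ZF$ over which the satisfaction predicate and the stagewise recursion are $\Delta_1$, and hence absolute, for transitive models. Once that is in place the rest is a chain of short verifications.
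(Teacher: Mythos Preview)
The paper does not prove this statement at all: it is stated as a \emph{Fact} attributed to G\"odel and treated as a black box throughout, so there is no proof in the paper to compare your proposal against. Your outline is a correct sketch of the standard construction and verification (constructible hierarchy, $\Delta_1$-absoluteness of $y=L_\alpha$, Condensation, the canonical well-order, and the $\GCH$ counting argument), and would be an entirely appropriate way to supply the missing details if one wanted to.
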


It is tempting to try to \emph{define} $L^M$ as the intersection of all inner models of $M$, which would immediately give a notion of `the inner model generated by a class $C \subseteq M$.' This does work for singletons (i.e., there is always a smallest inner mode $L(x)$ satisfying that $\{x\} \subseteq L(x)$), but this can fail in the case of infinite sets: %
There can be a set $x$ such that the intersection of all inner models $M$ satisfying $x \subseteq M$ is not an inner model \cite{Blass1981}. Moreover this is not really a usable definition of $L(x)$ since in general one knows very little about the collection of all inner models of a model of \ZF.

  \begin{figure}
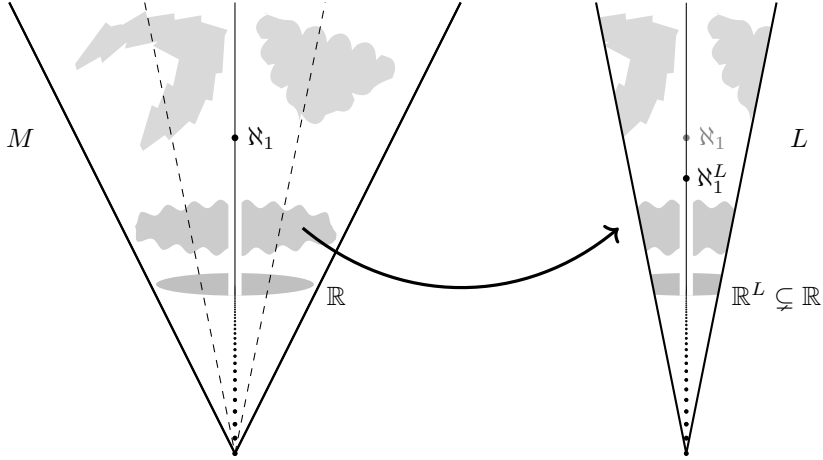

    \centering
    \GodelLFigure
    \caption{G\"odel's inner model $L$ for a typical model $M$ of \ZFC. $\Rb^L$ will usually be a strict subset of $\Rb$. Likewise, $\aleph_1$, the first uncountable ordinal, will in general be smaller in $L$ than it is in $M$.}
  \label{fig:Godels-L}
\end{figure}

The relationship between a model $M$ of \ZF\ and its corresponding $L^M$ is analogous to the relationship between a group $G$ and its center $Z(G)$ in a few ways: It depends on the initial model (i.e., for non-isomorphic models $M$ and $N$ of \ZF, $L^M$ and $L^N$ will not in general be isomorphic\footnote{Although for well-founded models, which set theorists primarily consider, it will always be the case that one of $L^M$ and $L^N$ is an initial segment of the other.  The focus on well-founded models in set theory is analogous to the focus on Noetherian rings in algebra, as they are both significantly easier to work with and fairly natural and interesting in their own right.}), it is idempotent (i.e., $L^{L^M} = L^M$), and in general $L^M$ is more `controlled' than $M$ (e.g., $L^M$ satisfies the axiom of choice even when $M$ does not, like how $Z(G)$ is abelian even if $G$ is not). $L^M$ is also first-order definable in $M$, like how $Z(G)$ is first-order definable in $G$, although, of course, $\varphi_L(x)$ is significantly more complicated than $\varphi_Z(x) \equiv \forall y (x\cdot y = y\cdot x)$.

Another rough analogy one might make is to compare the relationship between a typical model $M$ of \ZFC\ and $L$ to the relationship between the complex numbers $\Cb$ and the algebraic numbers $\overline{\Qb}$. $L$ in some sense represents the core of `explicitly definable' elements of $M$ (although there are subtle issues with making this statement precise \cite{Hamkins2024}). Moreover, it can easily be the case that $\Rb^L = \Rb \cap L$ is a proper subset of $\Rb$ or even is countable as a set in $M$.

The fact that $\Rb^L$ can be countable (in $M$) is at least reminiscent of the fact that $\overline{\Qb}$ is countable, although on some literal level the comparison is misleading: $L$ contains essentially every explicitly named real number (such as $\pi$, $e$, $\gamma$, etc.) as these can be proven to exist in \ZFC, which $L$ satisfies. A closer analogy would be to say that $\Rb^L$ is like the field of computable real numbers, although this is again not literally true, since \ZFC\ proves the existence of many specific non-computable real numbers (such as the halting oracle $0'$ or Chaitin's constant $\Omega$). Using a generalized notion of computation though, this can actually be made into a precise statement. $\Rb^L$ is the set of real numbers that can be computed by `ordinal Turing machines,' which are a kind of infinitary generalization of Turing machines that are intimately related to $L$ \cite{Koepke2005}.

\subsection{The definable well-ordering of $L$}
\label{sec:definable-well-ord}

One particularly important aspect of the fine structure of $L$ is that there is a precise mechanism that results in the axiom of choice holding, unlike in arbitrary models of \ZFC. One way to understand this mechanism is that in the construction of $L$, sets are added `carefully' so that it is possible to assign an ordinal `serial number' to each set. In other words, there is a bijection $f : \Ord \to L$ that is definable in $L$. This induces a definable global well-ordering $<_L$ of $L$, which then gives a definable global choice function: Whenever you need to pick an element of a set $X$, you can just take the $<_L$-least element of $X$ (which always exists since $<_L$ is a well-ordering). Another way to phrase this is that we are choosing the `first element of $X$ that showed up when we were building $L$.'

One thing to note is that, while $<_L$ is `canonical' in the sense of being specifically definable, it is not `canonical' in the sense of satisfying any more specific `naturality' properties. If $ \Lambda_{24} <_L \pi + e$, this doesn't mean there's any special relationship between the Leech lattice and the question of whether $\pi+e$ is an irrational number. Likewise there are arbitrary choices that go into the specific definition of $<_L$, so it could easily be the case that either $\ell^2 <_L \Qb_{19}$ or $\Qb_{19} <_L \ell^2$ (depending on whether we like Hilbert spaces or the $p$-adic numbers more). Perhaps more pertinently to ordinary mathematical concerns, if we use $<_L$ to build objects one typically builds with the axiom of choice (e.g., maximal ideals, bases of vector spaces, and extensions of linear operators on Banach subspaces), there's no reason to expect anything special about the resulting objects, aside from the fact that they're explicitly definable. %

For subsets of Polish spaces in particular, the definability of $<_L$ has the consequence that these kinds of choicy objects can be relatively simple to define in the sense of descriptive set theory. Specifically, G\"odel showed\footnote{Although in his notes, he credited the observation that the definition of the well-ordering can be phrased in such a simple way to Ulam \cite[p.~197]{Kreisel1980-qk}. (See also \cite[p.~151]{Kanamori2003}.)} that in $L$ the projection of the complement of the projection of a Borel set may fail to be measurable.

\begin{fact}[G\"odel]\label{fact:wo-defbl-first}
  Assuming $\VV = \LL$,\footnote{$\VV=\LL$ is the set-theoretic axiom that states `all sets are in $\LL$.' $V$ is the notation for the class of all sets in a model of \ZF. In other words, $M$ satisfies $V = L$ if and only if $M = L^M$.%
} there is a $G_\delta$ set (i.e., a countable intersection of open sets) $A \subseteq \Rb^4$ such that
    \[
      \pi_3\left( \Rb^3 \setminus \pi_4(A) \right) = \{\langle x,y \rangle \in \Rb^2 : x <_L y\},%
    \]
    where $\pi_4 : \Rb^4 \to \Rb^3$ and $\pi_3 : \Rb^3 \to \Rb^2$ are projection maps.%
  \end{fact}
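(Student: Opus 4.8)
The plan is to unwind the logical complexity of the canonical well-ordering $<_L$ and show it lands at the third level of the projective hierarchy, then translate that projective description into the stated formula involving $G_\delta$ sets and two projections. First I would recall the standard fact, going back to Gödel (and, in the sharp form using the condensation lemma, to work in the constructibility-theory tradition), that under $V = L$ the relation $\{\langle x, y\rangle : x <_L y\}$ is $\Sigma^1_2$ — indeed lightface $\Sigma^1_2$. The key mechanism is that $x <_L y$ can be expressed as ``there exists a countable well-founded model of a large enough fragment of $\ZF + V = L$ containing $x$ and $y$ in which $x$ precedes $y$ in the (absolutely defined) constructibility order.'' Coding such a countable model by a real $w$ and expressing ``$w$ codes a well-founded model of the right theory with the right properties'' is an arithmetic condition on $w$ together with the well-foundedness clause, which is $\Pi^1_1$; the outer ``there exists $w$'' then gives $\Sigma^1_2$. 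The absoluteness of the construction of $L$ between such models and $L$ itself (Shoenfield-style condensation) is what makes this equivalent to the genuine $<_L$.

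Next I would convert ``$\Sigma^1_2$'' into the exact syntactic shape demanded by the statement. A $\Sigma^1_2$ subset $B$ of $\Rb^2$ is by definition the projection of a $\Pi^1_1$ subset of $\Rb^3$, and a $\Pi^1_1$ subset of $\Rb^3$ is the complement (within $\Rb^3$) of a $\Sigma^1_1$ set, i.e., the complement of the projection $\pi_4(A)$ of some Borel $A \subseteq \Rb^4$. So $B = \pi_3(\Rb^3 \setminus \pi_4(A))$ for some Borel $A$, which is already almost the claim; the only remaining refinement is to arrange that $A$ can be taken to be $G_\delta$ rather than merely Borel. This is a routine normal-form manipulation: every $\Sigma^1_1$ set is the projection of a closed set (indeed one can take $A$ closed), and a closed set in $\Rb^4$ is in particular $G_\delta$; alternatively one absorbs an extra real quantifier over a $G_\delta$ matrix. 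I would either cite the standard fact that analytic sets are projections of closed (hence $G_\delta$) sets, or note that since a single universal real quantifier takes a $G_\delta$ condition to a $\Pi^1_1$ condition (by the tree representation / the fact that $\Pi^1_1 = \forall^{\Rb} G_\delta$ at the lightface-modulo-parameters level), we can fold the arithmetic matrix into a $G_\delta$ set directly.

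The main obstacle — and the step I would be most careful about — is verifying that the $\Sigma^1_2$ definition of $<_L$ really does capture $<_L$ on all of $\Rb^2$ and not just on a club or on $\Rb^L$; this is where the hypothesis $V = L$ is doing essential work. Under $V = L$ every real is constructible and, more importantly, for any two reals $x, y$ there is a countable $J_\alpha$ (or $L_\alpha$) containing both with $\alpha$ countable, and such $L_\alpha$'s are exactly the well-founded models the $\Sigma^1_2$ formula quantifies over; the absoluteness of $<_L$ to $L_\alpha$ (it is defined by a $\Sigma_1$ recursion that any sufficiently closed transitive model computes correctly) closes the loop. I would make sure to state precisely which finite fragment $T$ of $\ZF + V=L$ suffices for this absoluteness and well-foundedness coding, since getting $T$ right is the one genuinely technical point; everything else is bookkeeping about the projective hierarchy. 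Finally I would remark that the same argument shows the $G_\delta$ set $A$ can even be taken lightface (recursively coded), which is the feature that will matter later when we want an effective, and ultimately polynomial, presentation.
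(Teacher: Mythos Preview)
The paper does not prove this statement; it is recorded as a Fact attributed to G\"odel and used as a black box (the later \cref{fact:wo-defbl} sharpens it to a lightface $\lPi^0_2$ set, again without proof, citing Kanamori). Your outline is the standard argument and is correct: the $\Sigma^1_2$ definition of $<_L$ via ``there exists a real coding a countable well-founded model of a suitable finite fragment of $\ZF + V=L$ containing $x,y$ in which $x$ precedes $y$,'' together with the normal-form reduction of $\Sigma^1_2$ to $\pi_3(\Rb^3 \setminus \pi_4(A))$ with $A$ closed (hence $G_\delta$), is exactly how this is established in the literature. Your closing remark that $A$ can be taken lightface is precisely the content of the paper's \cref{fact:wo-defbl}.
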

  Moreover, it can be arranged\footnote{This is actually automatic for the definition of $<_L$ one typically writes down because every real number in $L$ is built at a countable stage of the transfinite construction of $L$.} that $\{x \in \Rb : x <_L r\}$ is countable and $\{x \in \Rb : r <_L x\}$ is co-countable for every $r \in \Rb$, implying that $\pi_3\left( \Rb^3 \setminus \pi_4(A) \right)$ is not Lebesgue measurable by Fubini's theorem:
  \[
    \int_0^1 \int_0^1
    \begin{rcases}
      \begin{dcases}
        1 & x <_L y \\
        0& x \not<_L y
      \end{dcases}
    \end{rcases}
    dx dy = 0 \neq 1 = \int_0^1 \int_0^1
    \begin{rcases}
      \begin{dcases}
        1 & x <_L y \\
        0& x \not<_L y
      \end{dcases}
    \end{rcases}
    dy dx.
  \]

  This gives us an immediate corollary (together with the basic results of measure theory that $\Rb^2$ and $\Rb$ with their standard Lebesgue measures are isomorphic as measure spaces) in the same manner as our other independence proofs.

  \begin{cor}\label{cor:L-non-meas}
    \ZFC\ does not prove that for every Borel set $B \subseteq \Rb^3$, $\pi_2(\Rb^2 \setminus \pi_3(B))$ is Lebesgue measurable, where $\pi_3 : \Rb^3 \to \Rb^2$ and $\pi_2 : \Rb^2 \to \Rb$ are projection maps.
  \end{cor}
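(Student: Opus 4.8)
The plan is to transport the non-measurable subset of $\Rb^2$ furnished by \cref{fact:wo-defbl-first} down one dimension into a non-measurable subset of $\Rb$ of exactly the shape $\pi_2(\Rb^2 \setminus \pi_3(B))$, and then to invoke the fact that $\LL$ is a model of $\ZFC$ in which $\VV = \LL$ holds. Throughout I work under the hypothesis $\VV = \LL$. Write $N \coloneq \{\langle x,y\rangle \in \Rb^2 : x <_L y\}$, which by \cref{fact:wo-defbl-first} equals $\pi_3(\Rb^3 \setminus \pi_4(A))$ for a $G_\delta$ set $A \subseteq \Rb^4$; the Fubini computation displayed just after \cref{fact:wo-defbl-first} shows that $N \cap [0,1]^2$ fails to be Lebesgue measurable.

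First I would fix a Borel isomorphism $\theta \colon [0,1] \to [0,1]^2$ that preserves Lebesgue measure --- i.e., the standard measure-space isomorphism $[0,1] \cong [0,1]^2$ (binary-digit interleaving, modulo the usual patching of the countably many dyadic exceptions) referred to just before the statement of the corollary --- and write $\theta = \langle \alpha, \beta\rangle$ with $\alpha, \beta \colon [0,1] \to [0,1]$ Borel. Since $\theta$ carries Lebesgue-null sets to Lebesgue-null sets in both directions, a subset $S \subseteq [0,1]^2$ is Lebesgue measurable if and only if $\theta^{-1}(S) \subseteq [0,1]$ is; in particular the set $\theta^{-1}(N \cap [0,1]^2) = \{u \in [0,1] : \langle \alpha(u), \beta(u)\rangle \in N\}$ fails to be Lebesgue measurable.

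Next I would realize this set in the required form. Unwinding $N = \pi_3(\Rb^3 \setminus \pi_4(A))$ gives, for $u \in [0,1]$,
\[
  \langle \alpha(u), \beta(u)\rangle \in N
  \quad\Longleftrightarrow\quad
  \exists z \in \Rb\ \ \forall w \in \Rb\ \ \ \langle \alpha(u), \beta(u), z, w\rangle \notin A ,
\]
so I set $B \coloneq \{\langle u,z,w\rangle \in \Rb^3 : u \in [0,1]\ \text{and}\ \langle \alpha(u), \beta(u), z, w\rangle \in A\}$. This $B$ is Borel, being the intersection of $[0,1] \times \Rb^2$ with the preimage of the $G_\delta$ set $A$ under the Borel map $\langle u,z,w\rangle \mapsto \langle \alpha(u), \beta(u), z, w\rangle$; note we forfeit the $G_\delta$-ness present in \cref{fact:wo-defbl-first}, since $\alpha$ and $\beta$ need not be continuous, but the corollary only demands a Borel set. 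Chasing the two projections, $\pi_3(B) = \{\langle u,z\rangle : u \in [0,1]\ \text{and}\ \langle \alpha(u),\beta(u),z\rangle \in \pi_4(A)\}$, and a short computation then identifies $\pi_2(\Rb^2 \setminus \pi_3(B))$ with $(\Rb \setminus [0,1]) \cup \theta^{-1}(N \cap [0,1]^2)$ --- the points outside $[0,1]$ are all in, since no element of $B$ has first coordinate outside $[0,1]$, while a point $u \in [0,1]$ is in exactly when $\langle \alpha(u), \beta(u)\rangle \in N$. Intersecting with the measurable set $[0,1]$ recovers the non-measurable set $\theta^{-1}(N \cap [0,1]^2)$, so $\pi_2(\Rb^2 \setminus \pi_3(B))$ is itself non-measurable.

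To conclude: the argument above shows that $\VV = \LL$ implies the existence of a Borel set $B \subseteq \Rb^3$ for which $\pi_2(\Rb^2 \setminus \pi_3(B))$ is non-measurable. Fixing a model $M$ of $\ZFC$ and passing to its inner model $L^M$, which satisfies $\ZFC + (\VV = \LL)$, we therefore obtain a model of $\ZFC$ in which the asserted statement that $\pi_2(\Rb^2 \setminus \pi_3(B))$ is Lebesgue measurable for every Borel $B \subseteq \Rb^3$ is false; hence $\ZFC$ does not prove it. I expect the one place needing genuine care to be the measure-isomorphism step --- concretely, checking that $\theta$ and $\theta^{-1}$ really are bimeasurable and null-preserving once the dyadic exceptions are patched, so that non-measurability transfers cleanly from $\Rb^2$ to $\Rb$ --- while the remaining manipulation is a routine projection chase.
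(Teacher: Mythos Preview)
Your proof is correct and follows essentially the same approach the paper indicates: it uses the measure-space isomorphism between $\Rb$ and $\Rb^2$ (which the paper merely cites as ``basic results of measure theory'') to transport the non-measurable $\bSigma^1_2$ set of \cref{fact:wo-defbl-first} down one dimension, and then passes to $L$ to obtain a model of \ZFC\ witnessing the failure. You have simply spelled out in detail what the paper treats as an immediate corollary.
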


  Of course at the moment, we don't have a true independence result because we don't know that it is consistent with \ZFC\ that all such sets are Lebesgue measurable. In fact, it is relatively hard to prove that there can be models of \ZF\ that don't satisfy $V = L$ in the first place, as this was an open problem until the development of forcing by Cohen.

Given \cref{fact:wo-defbl-first}, one might naturally wonder how `complicated' the set $A$ needs to be. Can it in some sense `actually be written down'? Or is the proof of its existence from $\VV = \LL$ somehow essentially non-constructive? Ordinary descriptive set theory does not really distinguish between different $G_\delta$ sets. They are all `equally complicated' in that they have the same Borel rank. In order to meaningfully and precisely answer these questions about $A$, we need to use ideas from computability theory. 
%

%
%
%


\section{Effectively describing sets}
\label{sec:eff-desc-set-thy}

Descriptive set theory as a field starts by asking the question `What can we say about regularity properties of subsets of $\Rb$ (and other metric spaces) that can be built using topologically tame sets, such as open and closed sets, using basic operations, such as (countable) unions and intersections, complements, and projections?' It has its roots in the beginning of measure theory and the work of Cantor, Borel, Lebesgue, Suslin, Lusin, and Sierpi\'nski. %
Despite the fact that questions about measurability of simple-to-define sets are now largely settled, descriptive set theory continues to be an active area of research, with many applications in various areas of math (both within and outside of mathematical logic). It's often useful in showing that classifying some family of objects up to isomorphism via `simply definable' invariants is impossible.%

One of the central concepts in descriptive set theory is that of a \emph{pointclass}. It is mildly onerous to give a precise definition of the word `pointclass' that will remain correct as we start considering effective descriptive set theory, so for the purposes of this paper, we will just take the word to refer to the concepts given in Definitions~\ref{defn:proj}, \ref{defn:semicomp}, \ref{defn:eff-proj}, and \ref{defn:eff-Borel}.

The most familiar examples of pointclasses are probably the classes of open and closed sets, but the classes of $G_\delta$ and $F_\sigma$ sets are also fairly common. Beyond $G_\delta$ and $F_\sigma$ is of course the pointclass of all Borel sets, and even further beyond this is the \emph{projective hierarchy} (see Figure~\ref{fig:projective-strict}), which is built out of Borel sets by successive applications of the operations of projections and complements and is a major topic of descriptive set theory and its interaction with set theory.

$G_\delta$ and $F_\sigma$ are remnants of an older notation that was abandoned by descriptive set theorists as it does not scale well to more complicated sets. %
The following definition features some of the notation adopted by descriptive set theorists. 

Recall that a \emph{Polish space} is a completely metrizable separable space. For the moment, kindly ignore the heavy choice of notation in \cref{defn:proj} (i.e., the bold font and under-tildes); the rationale for this will become clear after \cref{defn:eff-proj}.

\begin{defn}\label{defn:proj}
  The \emph{(boldface) projective hierarchy} is a family of pointclasses of subsets of Polish spaces defined inductively as follows. %
  \begin{itemize}
  \item A subset $A$ of a Polish space $X$ is $\bSigma^1_1$ (also called \emph{analytic}) if there is a Borel set $B \subseteq X \times \Nb^\Nb$ such that $A = \pi(B)$.
  \item A set is $\bPi^1_n$ if it is the complement of a $\bSigma^1_n$ set.
  \item A set $A \subseteq X$ is $\bSigma^1_{n+1}$ if there is a $\bPi^1_n$ set $B \subseteq X \times \Nb^\Nb$ such that $A = \pi(B)$.
  \end{itemize}
  Finally, a set is $\bDelta^1_n$ if it is both $\bSigma^1_n$ and $\bPi^1_n$.
\end{defn}

Suslin's theorem (later generalized by Lusin's separation theorem) is a particularly important structural fact about the projective hierarchy.

\begin{fact}[Suslin]\label{fact:Suslin}
  A set is $\bDelta^1_1$ if and only if it is Borel.
\end{fact}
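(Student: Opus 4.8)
The plan is to prove the two implications of \cref{fact:Suslin} separately. The implication ``Borel $\Rightarrow \bDelta^1_1$'' is pure bookkeeping: the Borel sets form a collection closed under complementation and countable unions and intersections, so it suffices to show every Borel set $B \subseteq X$ is $\bSigma^1_1$. For that, observe that $B \times \Nb^\Nb \subseteq X \times \Nb^\Nb$ is Borel and projects onto $B$, which is exactly the defining condition for $B$ to be $\bSigma^1_1$ in \cref{defn:proj}. Applying this to $X \setminus B$ as well shows $B \in \bPi^1_1$, hence $B \in \bDelta^1_1$.

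The substantive implication is ``$\bDelta^1_1 \Rightarrow$ Borel,'' and I would obtain it as a corollary of the \emph{Lusin separation theorem}: any two disjoint analytic subsets $A, B$ of a Polish space $X$ can be separated by a Borel set, i.e.\ there is a Borel $C$ with $A \subseteq C$ and $C \cap B = \emptyset$. Granting this, if $A$ is $\bDelta^1_1$ then $A$ and $X \setminus A$ are disjoint analytic sets, so some Borel $C$ satisfies $A \subseteq C$ and $C \cap (X \setminus A) = \emptyset$; these two conditions together force $C = A$, so $A$ is Borel. Thus the whole theorem reduces to proving separation.

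For the separation theorem I would argue by contradiction from three ingredients (after noting the case where $A$ or $B$ is empty is trivial). (i) \textbf{Normal form:} every nonempty analytic set is a continuous image of Baire space $\Nb^\Nb$, so write $A = f[\Nb^\Nb]$ and $B = g[\Nb^\Nb]$ with $f, g$ continuous, and for a finite sequence $s$ put $A_s = f[N_s]$ and $B_s = g[N_s]$, where $N_s$ is the basic clopen set of elements of $\Nb^\Nb$ extending $s$. (ii) \textbf{Separation lemma:} if $P = \bigcup_m P_m$, $Q = \bigcup_n Q_n$, and every pair $(P_m, Q_n)$ is separated by a Borel set $C_{m,n}$ (with $P_m \subseteq C_{m,n}$, $C_{m,n} \cap Q_n = \emptyset$), then $\bigcup_m \bigcap_n C_{m,n}$ is a Borel set separating $P$ from $Q$ --- an immediate set-theoretic verification. (iii) \textbf{The tree argument:} suppose $A$ and $B$ are \emph{not} separated. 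Since $A = \bigcup_i A_{\langle i \rangle}$ and $B = \bigcup_j B_{\langle j \rangle}$, the separation lemma yields $i_0, j_0$ with $(A_{\langle i_0 \rangle}, B_{\langle j_0 \rangle})$ not separated; iterating, build $\alpha, \beta \in \Nb^\Nb$ such that $(A_{\alpha \res n}, B_{\beta \res n})$ is not separated for every $n$. Put $x = f(\alpha) \in A$ and $y = g(\beta) \in B$; since $A \cap B = \emptyset$ we have $x \neq y$, so choose disjoint open sets $U \ni x$ and $V \ni y$. By continuity of $f$ and $g$, $A_{\alpha \res n} \subseteq U$ and $B_{\beta \res n} \subseteq V$ once $n$ is large enough, and then the open (hence Borel) set $U$ separates $A_{\alpha \res n}$ from $B_{\beta \res n}$ --- a contradiction, completing the proof.

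I expect the main obstacle to be ingredient (i), the normal-form representation of analytic sets as continuous images of $\Nb^\Nb$: this is where one must unfold the Borel hierarchy and invoke Polish-space facts such as ``closed subsets of $\Nb^\Nb$ are continuous retracts of $\Nb^\Nb$'' and ``Borel subsets of a Polish space are continuous images of closed subsets of $\Nb^\Nb$.'' I would treat this as standard infrastructure and cite it. Alternatively, one can sidestep it entirely by recasting the argument in terms of trees on $\Nb \times \Nb$: write the analytic sets as projections $p[T_A]$ and $p[T_B]$ and run the same recursion directly through $T_A$ and $T_B$, extracting the branches $\alpha, \beta$ there, at the cost of a little more notation. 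The combinatorial heart, steps (ii) and (iii), is short and should present no real difficulty once the normal form is in hand.
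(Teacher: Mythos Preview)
Your proposal is correct and is precisely the standard proof of Suslin's theorem via the Lusin separation theorem. Note, however, that the paper does not actually give a proof of this statement: it is recorded as a \emph{Fact} attributed to Suslin and simply cited as classical (the paper even remarks parenthetically that it was ``later generalized by Lusin's separation theorem,'' which is exactly the tool you invoke). So there is nothing to compare against; your write-up supplies a complete argument where the paper intentionally gives none.
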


\begin{figure}
  \centering
\begin{tikzcd}
	& {\bSigma^1_1} && {\bSigma^1_2} && {\bSigma^1_3} & { } \\
	{\text{Borel} = \bDelta^1_1} && {\bDelta^1_2} && {\bDelta^1_3} & \cdots & {} \\
	& {\bPi^1_1} && {\bPi^1_2} && {\bPi^1_3} & { }
	\arrow["\subsetneq"{marking, allow upside down}, draw=none, from=1-2, to=2-3]
	\arrow["\subsetneq"{marking, allow upside down}, draw=none, from=1-4, to=2-5]
	\arrow["\cdots"{marking, allow upside down}, draw=none, from=1-6, to=1-7]
	\arrow["\subsetneq"{marking, allow upside down}, draw=none, from=2-1, to=1-2]
	\arrow["\subsetneq"{marking, allow upside down}, draw=none, from=2-1, to=3-2]
	\arrow["\subsetneq"{marking, allow upside down}, draw=none, from=2-3, to=1-4]
	\arrow["\subsetneq"{marking, allow upside down}, draw=none, from=2-3, to=3-4]
	\arrow["\subsetneq"{marking, allow upside down}, draw=none, from=2-5, to=1-6]
	\arrow["\subsetneq"{marking, allow upside down}, draw=none, from=2-5, to=3-6]
	\arrow["\subsetneq"{marking, allow upside down}, draw=none, from=3-2, to=2-3]
	\arrow["\subsetneq"{marking, allow upside down}, draw=none, from=3-4, to=2-5]
	\arrow["\cdots"{marking, allow upside down}, draw=none, from=3-6, to=3-7]
\end{tikzcd}
  \caption{For any uncountable Polish space $X$, the boldface projective hierarchy is strict. The same is true of the lightface projective hierarchy for countable or uncountable basic Polish spaces.}
  \label{fig:projective-strict}
\end{figure}
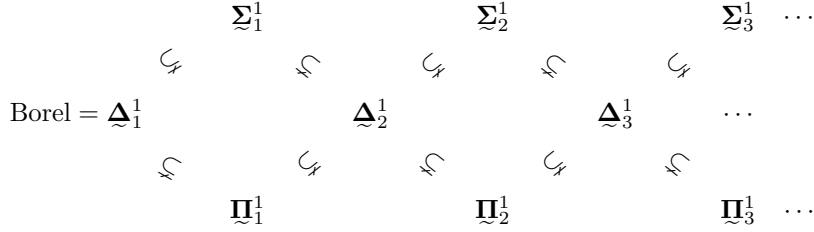

\emph{Effective} descriptive set theory grew out of a body of connections and analogies between descriptive set theory and computability theory and specifically the work of Kleene, Mostowski, and Addison in the 1950s. Within descriptive set theory more generally, it is often useful for fine-grained analysis and has applications to problems that do not overtly involve computability theory, such as work by Lutz, Qi, and Yu in \cite{Lutz2024} on the Hausdorff dimensions of Hamel bases.

Here we will review some basic definitions and facts from effective descriptive set theory which we will eventually need. We will mostly follow \cite{Moschovakis2009-je}, although we will use slightly different terminology, with the biggest difference being the uniform replacement of the word `recursive' (in the sense of recursion theory) with the word `computable' (in the sense of computability theory, which is the same thing as recursion theory). We should also highlight that the following term is not standard.\footnote{Effective descriptive set theory in general deals with `computably presentable' Polish spaces, which includes the vast majority of explicitly named metric spaces in mathematics.}

\begin{defn}
  A \emph{basic Polish space} is a finite product of the spaces $\Nb$, $\Nb^\Nb$, $\Rb$, and $\olR$ (where $\olR \coloneq [-\infty,\infty]$ is the extended real numbers).
\end{defn}

Obviously we could add more to this list if we wanted to, but these are the only spaces we'll use.

We need to fix particular enumerations of topological bases of basic Polish spaces. We will not actually be specifically referencing these enumerations at all; we've provided them more for the thematic purpose of explicitness (and for the sake of fun). As such, let $\nu_p(n)$ be the $p$-adic valuation of $n$ (i.e., the highest power of $p$ that divides $n$). Let $(p_n)_{n \in \Nb}$ be the sequence of prime numbers (i.e., $p_0 = 2$, $p_1=3$, etc.). For each of the spaces $\Nb$, $\Nb^\Nb$, $\Rb$, and $\olR$ we fix an explicit enumeration of a topological basis:
\begin{itemize}
\item $N(\Nb,n) = \{n\}$.
\item For $n > 0$,
  \[
    N(\Nb^\Nb,n-1) = \{\alpha \in \Nb^\Nb : (\forall i < \nu_2(n))\alpha(i) = \nu_{p_{i+1}}(n)\}.
  \]
\item For $n > 0$, $N(\Rb,n-1)$ is the open interval
  \[
    \left( \frac{\nu_3(n) - \nu_5(n)}{1+\nu_2(n)}, \frac{\nu_3(n) - \nu_5(n) + \nu_7(n) + 1}{1+\nu_2(n)} \right).
  \]
\item $N(\olR,3n) = N(\Rb,n)$, $N(\olR,3n+1) = (n,\infty]$, and $N(\olR,3n+2) = [-\infty,-n)$.
\item For any product $X \times Y$ of basic Polish spaces, for $n > 0$, $N(X\times Y, n-1) = N(X,\nu_2(n))\times N(Y,\nu_3(n))$.
\end{itemize}
The fact that we have many redundant entries in these enumerations doesn't really matter. Moreover, the particular choices of basic open neighborhoods do not matter that much. All that matters is that these actually are open bases and that basic facts about sets in each basis (such as inclusion of one set in another, emptiness of intersection, etc.) are computably decidable, which is easy to verify about our choices here. 

The above exercise of choosing values for $N(X,n)$ gives a good example of how computability theorists (and other logicians for that matter) think. Most mathematicians engage with the natural numbers combinatorially or in an algebraic or analytic capacity. To a computability theorist, by contrast, a natural number is often just data. Any finite, possibly heterogeneous, bundle of data can be coded as a single natural number, as in our coding of $N(\Nb^\Nb,n)$. Here we're using $\nu_2$ in a fundamentally different way than the other $p$-adic valuations, in that it codes the length of initial segment of an element of $\Nb^\Nb$ that needs to be checked to verify membership in $N(\Nb^\Nb,n)$. All of this is fairly unnatural from the point of view of number theory, but it doesn't matter here. All that matters at the moment is that we \emph{can} do it, just like with the definable well-order $<_L$ of $L$.

Recall that a set $W$ of natural numbers is \emph{computably enumerable} if, informally speaking, there is a computer program that lists the elements of $W$ in some order. %
This can be formalized of course, but all reasonable formalizations of this notion result in an equivalent theory of computation. (This is the phenomenon of \emph{Turing completeness} in computer programming.) $W$ is \emph{computable} or \emph{decidable} if there is a computer program that can decide membership of $W$. A basic exercise in computability theory is showing that $W$ is computable if and only if both $W$ and $\Nb \setminus W$ are computably enumerable. Famously Turing showed that there are computably enumerable sets that are not computable (specifically sets coding the solution to the halting problem), but there are also many examples in mathematics of sets that are known to be computably enumerable but not known to be computable, such as the set $\{|p-q| : p,q~\text{prime}\}$.

Computably enumerable sets have a familial resemblance to open sets. 
Membership in either a computably enumerable or open set is witnessed by some `finite amount of data.' This family resemblance makes the following notion fairly natural.

\begin{defn}\label{defn:semicomp}
  Given a basic Polish space $X$, a set $U \subseteq X$ is \emph{semicomputable} if there is a computably enumerable set $W$ such that $U = \bigcup_{n \in W}N(X,n)$. 
\end{defn}

It is immediate that a set $U \subseteq \Nb$ is semicomputable if and only if it is computably enumerable.

Since our goal is to analyze the complexity of projective sets in terms of computability theory, we also need the following more fine-grained notion.

\begin{defn}\label{defn:eff-proj}
  The \emph{lightface} or \emph{effective projective hierarchy} is a family of pointclasses of subsets of basic Polish spaces defined inductively as follows.
  \begin{itemize}
  \item A set $A \subseteq X$ is $\lSigma^1_1$ if there is a semicomputable set $U \subseteq X \times \Nb^\Nb$ such that $A = \pi(X \times \Nb^\Nb \setminus U)$.
  \item  A set is $\lPi^1_n$ if it is the complement of a $\lSigma^1_n$ set.
  \item A set $A \subseteq X$ is $\lSigma^1_{n+1}$ if there is a $\lPi^1_n$ set $B \subseteq X \times \Nb^\Nb$ such that $A = \pi(B)$.
  \end{itemize}
  Finally, a set is $\lDelta^1_n$ if it is both $\lSigma^1_n$ and $\lPi^1_n$.
\end{defn}

The terms `boldface' and `lightface' are in reference to the fact that historically the two families of pointclasses were distinguished by font alone. %
Since this is admittedly an awful notational convention, it is now common to further distinguish the boldface pointclasses by under-tildes.\footnote{It should be noted, however, that a lot of descriptive set theory literature only considers the boldface notion and so doesn't bother with maintaining notation for distinguishing the lightface projective sets.}

 The superscript $1$ is to distinguish from the notation for the levels of the (effective) Borel hierarchy (e.g., $\lSigma^0_1$ and $\lPi^0_1$). Since we won't be needing to think about the complexity of Borel sets in this paper, we will avoid spelling out too much of this notation, but we will still need the following definitions.

\begin{defn}\label{defn:eff-Borel}
  For any basic Polish space $X$, a set $A \subseteq X$ is a \emph{$\lSigma^0_1$ set} if it is semicomputable. A set is a \emph{$\lPi^0_1$ set} if it is the complement of a $\lSigma^0_1$ set. A set is a \emph{$\lDelta^0_1$ set} if it is both $\lSigma^0_1$ and $\lPi^0_1$.

  A set $B$ is a \emph{$\lPi^0_2$ set} if there exists a semicomputable $U \subseteq X \times \Nb$ such that
  \(
    B = \bigcap_{n \in \Nb}\{x \in X : \langle x,n \rangle \in U\}.
  \)
  A \emph{$\lSigma^0_2$ set} is the complement of a $\lPi^0_2$ set.
\end{defn}

Just as how semicomputable sets are `computably open' sets, $\lPi^0_2$ sets are `computably $G_\delta$' sets. \cref{fact:Suslin} has a generalization to the lightface projective sets, which is a bit technical to state. We will use a corollary of this a few times which is that $\lSigma^0_1$, $\lPi^0_1$, $\lSigma^0_2$, and $\lPi^0_2$ sets are all $\lDelta^1_1$.

The following basic fact will also be used frequently. The second part of this statement should be interpreted as saying that the lightface projective pointclasses are closed under `computable countable unions' and `computable countable intersections' and the boldface projective pointclasses are closed under countable unions and intersections. %

\begin{fact}[{\cite[Cor.~3E.2]{Moschovakis2009-je}}]\label{fact:Bool}$ $
  Both of the classes $\lSigma^1_n$ and $\bSigma^1_n$ are closed under finite intersections and unions. %
  If $A \subseteq X \times \Nb$ is $\lSigma^1_n$ (resp.~$\bSigma^1_n$), then
  \(
    \bigcup_{k \in \Nb} \{x \in X : \langle x,k \rangle \in A\}
  \)
  and
  \(
    \bigcap_{k \in \Nb} \{x \in X : \langle x,k \rangle \in A\}
  \)
  are $\lSigma^1_n$ (resp.~$\bSigma^1_n$) as well.
\end{fact}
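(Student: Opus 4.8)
The plan is to prove the two halves of \cref{fact:Bool} in the same way, carrying the lightface and boldface cases in parallel, since the only difference is whether the ``witness'' data is required to be computable or merely to exist. The key observation is that everything reduces to the base case $n=1$ together with a uniform treatment of projection, because the defining clauses of the hierarchies are just ``complement'' and ``project a set from the previous level.'' So first I would establish the closure properties for $\lSigma^1_1$ and $\bSigma^1_1$ directly, and then push them up the hierarchy by an easy induction.

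For the base case, recall that $A$ is $\bSigma^1_1$ iff $A = \pi(B)$ for a Borel $B \subseteq X \times \Nb^\Nb$ (and $\lSigma^1_1$ iff $B$ can be taken of the form $X \times \Nb^\Nb \setminus U$ with $U$ semicomputable, equivalently $B$ is $\lPi^0_1$). Given $A_0 = \pi(B_0)$ and $A_1 = \pi(B_1)$, for the \emph{union} I would use $A_0 \cup A_1 = \pi(B_0 \cup B_1)$ (Borel sets, resp.\ $\lPi^0_1$ sets, are closed under finite unions). For the \emph{intersection} I need to be slightly careful because the same witness $\alpha \in \Nb^\Nb$ need not work for both; the standard trick is to use two copies of Baire space, i.e.\ pair up $\Nb^\Nb \times \Nb^\Nb \cong \Nb^\Nb$ via a computable homeomorphism, and write $A_0 \cap A_1 = \pi\bigl(B_0' \cap B_1'\bigr)$ where $B_i'$ is the pullback of $B_i$ along the appropriate coordinate projection; again this preserves being Borel, resp.\ $\lPi^0_1$. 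For the countable union and intersection: given $A \subseteq X \times \Nb$ with sections $A_k = \{x : \langle x,k\rangle \in A\}$, writing $A = \pi_{X\times\Nb^\Nb}^{\text{from } X\times\Nb\times\Nb^\Nb}$-style, one gets $\bigcup_k A_k = \pi(B)$ where $B \subseteq X \times \Nb^\Nb$ absorbs the index $k$ into the Baire-space coordinate (encode $k$ as $\alpha(0)$, say), and $\bigcap_k A_k = \pi(B')$ where $B' = \bigcap_k (\text{pullback of the }k\text{-th piece})$, using a single $\alpha$ whose components $(\alpha)_k$ serve as the witness for section $k$ — this is exactly the ``$\Nb^\Nb \cong (\Nb^\Nb)^\Nb$'' coding. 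In the lightface case I must check that all the coding maps and the passage from the semicomputable witness of $A$ to that of $B$, $B'$ are computable, which follows from the standing assumption that the neighborhood enumerations $N(X,n)$ are computably manipulable.

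With $n=1$ done, the inductive step is routine: if the claim holds for $\lPi^1_n$ / $\bPi^1_n$ (which follows from the $\lSigma^1_n$ / $\bSigma^1_n$ case by taking complements and De Morgan, noting complement swaps $\cup \leftrightarrow \cap$), then a $\lSigma^1_{n+1}$ set is $\pi(B)$ for $B$ a $\lPi^1_n$ subset of $X \times \Nb^\Nb$, and the very same projection-and-recoding manipulations as in the base case — closure of $\lPi^1_n$ under finite and computable-countable unions/intersections, plus the Baire-space pairing — give closure of $\lSigma^1_{n+1}$. (Finite unions and intersections of $\lPi^1_n$ sets are again $\lPi^1_n$ by De Morgan applied to the $\lSigma^1_n$ closure; computable-countable ones likewise.) Since this is precisely Corollary 3E.2 of Moschovakis, I would in the actual write-up simply cite it and sketch only the base-case projection manipulations, which are the conceptual content.

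\textbf{Main obstacle.} The only real subtlety — and the step I'd expect to take the most care with — is the ``witness-merging'' for intersections: one genuinely needs the homeomorphism $\Nb^\Nb \cong \Nb^\Nb \times \Nb^\Nb$ (and $\Nb^\Nb \cong (\Nb^\Nb)^\Nb$ for countable intersections) and the fact that, in the lightface setting, the induced transformation on semicomputable index sets is computable. Everything else (finite unions, complements, the induction) is bookkeeping. I would not grind through the explicit coding arithmetic, since the chosen enumerations $N(X,n)$ were set up precisely so that such manipulations are computably decidable.
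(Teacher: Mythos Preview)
Your proposal is correct and is the standard argument (essentially what Moschovakis does in 3E.2), and you yourself note that in the final write-up you would simply cite the reference. The paper does exactly that: it states the fact with the citation to \cite[Cor.~3E.2]{Moschovakis2009-je} and gives no proof at all, so there is nothing further to compare.
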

Note that \cref{fact:Bool} immediately implies analogous results for the (boldface and lightface) $\Pi$ and $\Delta$ pointclasses.

It is also useful to define a notion of definability for functions, analogously to the preceding notions of definability for sets.

\begin{defn}\label{defn:pointclass-fun}
  Given two basic Polish spaces $X$ and $Y$ and a pointclass $\Gamma$, a function $f : X \to Y$ is a \emph{$\Gamma$ function}\footnote{In \cite{Moschovakis2009-je}, these are also sometimes referred to as \emph{$\Gamma$-recursive} functions.} if the set
  \(
    \{\langle x,n \rangle \in X \times \Nb : f(x) \in N(Y,n)\}
  \)
  is in $\Gamma$.
\end{defn}

This generalizes a few other notions: A function $f : X \to Y$ is
\begin{itemize}
\item $\bDelta^1_1$ if and only if it is Borel,%
\item $\bSigma^0_1$ if and only if it is continuous (where $\bSigma^0_1$ is the pointclass of open sets), and
\item $\lSigma^0_1$ if and only if it is computable in the sense of computable analysis. %
\end{itemize}
Moreover a function $f : \Nb^k \to \Nb$ is computable in the sense of computability theory if and only if it is a $\lSigma^0_1$ function. The fact regarding $\bDelta^1_1$ functions follows from \cref{fact:Suslin}.

We will be using the following fact in several places.

\begin{fact}[{\cite[Thm.~3E.5 and 3E.7]{Moschovakis2009-je}}]\label{fact:function-sub}
  For any basic Polish spaces $X$ and $Y$, any $\lDelta^1_1$ function $f: X \to Y$, and any set $A \subseteq Y$, if $A$ is $\lSigma^1_n$ (resp.\ $\lPi^1_n$, $\lDelta^1_n$), then the preimage $f^{-1}(A)$ is $\lSigma^1_n$ (resp.\ $\lPi^1_n$, $\lDelta^1_n$) as well.

   For any uncountable basic Polish spaces $X$ and $Y$, there is a $\lDelta^1_1$ bijection $f : X \to Y$ with $\lDelta^1_1$ inverse.
\end{fact}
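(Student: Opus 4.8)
The plan is to prove the two assertions in turn, the first feeding into the second; throughout I use freely the elementary (and separately proved) fact that preimages under computable functions preserve all the lightface pointclasses.

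\textbf{Preimages under $\lDelta^1_1$ functions.} The idea is to thread $f$ through a canonical coding of its target by fast Cauchy sequences of basic neighborhoods. The explicit bases $N(Y,\cdot)$ fixed above are fine enough that every $y \in Y$ admits a \emph{code}, meaning an $\alpha \in \Nb^\Nb$ with $\overline{N(Y,\alpha(i+1))} \subseteq N(Y,\alpha(i))$ and $\operatorname{diam} N(Y,\alpha(i)) < 2^{-i}$ for all $i$ (a $\lPi^0_1$ condition on $\alpha$), and from any code one recovers $y = \lim\alpha$ by a fixed computable partial map $\lim$. For a $\lDelta^1_1$ function $f : X \to Y$ put $\Phi = \{\langle x,\alpha\rangle : \alpha \text{ is a code and } f(x) \in N(Y,\alpha(i)) \text{ for all } i\}$. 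For each fixed $i$ the set $\{\langle x,\alpha\rangle : f(x) \in N(Y,\alpha(i))\}$ is the preimage of the $\lDelta^1_1$ set $\{\langle x,m\rangle : f(x) \in N(Y,m)\}$ (which is $\lDelta^1_1$ by definition of a $\lDelta^1_1$ function) under the computable map $\langle x,\alpha\rangle \mapsto \langle x,\alpha(i)\rangle$, hence $\lDelta^1_1$; since \cref{fact:Bool} makes $\lDelta^1_1$ closed under computable countable intersections as well as unions, intersecting these over all $i$ and with the $\lPi^0_1$ code-condition yields $\Phi \in \lDelta^1_1$. For every $x$ there is at least one code $\alpha$ of $f(x)$, so $\langle x,\alpha\rangle \in \Phi$, and conversely any $\alpha$ with $\langle x,\alpha\rangle \in \Phi$ has $\lim\alpha = f(x)$; thus $f^{-1}(A) = \{x : (\exists\alpha)(\langle x,\alpha\rangle \in \Phi \wedge \lim\alpha \in A)\}$ for all $A \subseteq Y$. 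If $A$ is $\lSigma^1_n$ with $n \geq 1$, then ``$\lim\alpha \in A$'' is a computable preimage of $A$, hence $\lSigma^1_n$; conjoining with $\Phi \in \lDelta^1_1 \subseteq \lSigma^1_n$ stays $\lSigma^1_n$; and applying $\exists^{\Nb^\Nb}$ stays $\lSigma^1_n$ by the definition of the hierarchy. The $\lPi^1_n$ case is the complement of the $\lSigma^1_n$ case, and the $\lDelta^1_n$ case is the conjunction of the two.

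\textbf{$\lDelta^1_1$-isomorphism.} Since the inverse of a $\lDelta^1_1$ bijection with $\lDelta^1_1$ inverse is again one, and composites of $\lDelta^1_1$ functions are $\lDelta^1_1$ by the first part, it suffices to produce a $\lDelta^1_1$-isomorphism $X \cong \Nb^\Nb$ for every uncountable basic Polish space $X$ and then pass through $X \cong \Nb^\Nb \cong Y$. Such an $X$ is perfect (a finite product of copies of $\Nb, \Nb^\Nb, \Rb, \olR$ is uncountable only when some factor is one of the perfect spaces $\Nb^\Nb, \Rb, \olR$, and a product with a perfect factor is perfect), and one builds two $\lDelta^1_1$-embeddings. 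On one side, $\Nb^\Nb$ embeds into $X$ by a computable Cantor scheme: using perfectness and the computable presentation, recursively choose basic open sets $U_s$ (indexed by finite sequences) with pairwise-disjoint sibling closures, $\overline{U_{s \frown i}} \subseteq U_s$, and diameters tending to $0$, and let $e(\alpha)$ be the unique point of $\bigcap_n U_{\alpha \res n}$; this $e$ is a computable injection with $\lPi^0_2$ (so $\lDelta^1_1$) range and computable inverse. On the other side, $X$ embeds into $\Nb^\Nb$ by sending $x$ to its pointwise-least code in the sense above, which is an injective $\lDelta^1_1$-recursive map with $\lDelta^1_1$ range and computable inverse $\lim$. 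Given these two embeddings $e : \Nb^\Nb \to X$ and $j : X \to \Nb^\Nb$, the Banach decomposition from the proof of Cantor--Schr\"oder--Bernstein finishes: with $C_0 = X \setminus e[\Nb^\Nb]$, $C_{n+1} = (e \circ j)[C_n]$, and $C = \bigcup_n C_n$, the map $H$ equal to $j$ on $C$ and to $e^{-1}$ off $C$ is a bijection $X \to \Nb^\Nb$; an effective Lusin--Suslin theorem (the lightface form of ``an injective Borel image of a Borel set is Borel, with Borel inverse'') makes the $C_n$ uniformly $\lDelta^1_1$, so $C \in \lDelta^1_1$ by closure under computable countable unions, whence $H$ and $H^{-1}$ are $\lDelta^1_1$.

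\textbf{The main obstacle.} For the first assertion the only delicate point is verifying $\Phi \in \lDelta^1_1$, which rests on the fact---packaged into \cref{fact:Bool}---that $\lDelta^1_1$ is closed under \emph{computable} countable intersections (uncomputable ones would leave the lightface world), plus a check that the fixed bases really admit fast-Cauchy codes; the rest is formal. For the second assertion the substantive ingredients are the effective Cantor-scheme construction (an effective perfect-set argument in computably presented perfect Polish spaces) and, above all, the effective Lusin--Suslin theorem, which is precisely what keeps the iterated images $C_n$ from drifting up to $\lSigma^1_1$; I would cite both from \cite{Moschovakis2009-je} rather than reprove them, noting that for the finitely many basic Polish spaces in play one could instead sidestep Lusin--Suslin by writing down explicit $\lDelta^1_1$-isomorphisms with $\Nb^\Nb$ using continued fractions, dyadic expansions, and Hilbert-hotel shuffles to absorb countably many exceptional points.
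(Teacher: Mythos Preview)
The paper does not prove this statement at all: it is stated as a \textbf{Fact} with a bare citation to Moschovakis \cite[Thm.~3E.5 and 3E.7]{Moschovakis2009-je}, so there is no ``paper's own proof'' to compare against. Your sketch is essentially the standard argument one finds in Moschovakis for those two theorems---coding points of $Y$ by fast Cauchy sequences to reduce the $\lDelta^1_1$ substitution property to computable substitution plus an $\exists^{\Nb^\Nb}$, and an effective Cantor--Schr\"oder--Bernstein for the isomorphism---and it is correct at the level of detail you give.

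Two minor remarks. First, in the substitution argument you invoke \cref{fact:Bool} to get closure of $\lDelta^1_1$ under computable countable intersections; the paper only states \cref{fact:Bool} for $\lSigma^1_n$, but immediately afterwards observes that the $\lPi$ and $\lDelta$ analogs follow, so you are on safe ground. Second, your honest flagging that the effective Lusin--Suslin theorem is the real load-bearing ingredient in the isomorphism half is exactly right, and your suggested workaround (explicit continued-fraction/dyadic bijections for the handful of basic Polish spaces actually in play) is both valid and arguably more in the spirit of this paper's emphasis on explicitness than the general CSB route.
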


This is extremely useful because the class of $\lDelta^1_1$ functions includes all computable functions and is closed under basic operations such as composition. The second part of \cref{fact:function-sub} (as well as its boldface analog) is part of the reason why so much descriptive set theory literature is phrased explicitly in terms of Baire space, $\Nb^\Nb$, rather than $\Rb$, despite the historical origins of the field. All uncountable Polish spaces are `isomorphic' in terms of most of the structure relevant to descriptive set theory, and this fact is essentially true in the sense of effective descriptive set theory as well. There is a clear analogy here to computability theory. One naturally encounters many different countable sets in computability theory, 
such as $\Nb$, $\Nb^n$, $\Zb$, and $\Qb$, but these are all `the same' in the sense that there are computable bijections between them.

That said, there are of course meaningful differences between different uncountable Polish spaces (e.g., $\olR$ is compact, $\Rb$ is locally compact, and $\Nb^\Nb$ is not even $\sigma$-compact), but in terms of descriptive set theory we only really see these differences near the low end of the Borel hierarchy (as we will see later in \cref{sec:extra-quant-sigma-compact}). For instance, we cannot replace $\lPi^0_2$ with $\lPi^0_1$
in the following fact. %

\begin{fact}[{\cite[Ex.~3E.12]{Moschovakis2009-je}}]\label{fact:effective-Suslin}
  For any basic Polish space $X$, a set $A \subseteq X$ is $\lSigma^1_1$ if and only if there is a $\lPi^0_2$ set $B \subseteq X \times \Rb$ such that $A = \pi(B)$.
\end{fact}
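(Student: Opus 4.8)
The plan is to prove the two implications separately: the reverse one is essentially bookkeeping with the pointclass machinery already assembled, while the forward one is the real content and requires an explicit, sufficiently effective copy of $\Nb^\Nb$ sitting inside $\Rb$. For the reverse direction, suppose $B \subseteq X \times \Rb$ is $\lPi^0_2$ with $A = \pi(B)$. As noted just after \cref{defn:eff-Borel}, $\lPi^0_2$ sets are $\lDelta^1_1$, and $\lDelta^1_1 \subseteq \lSigma^1_1$ by definition, so $B$ is $\lSigma^1_1$; it then suffices to know that $\lSigma^1_1$ is closed under projection along $\Rb$. This closure property is standard and can be extracted from the tools at hand: write $B$ via \cref{defn:eff-proj} as a projection along $\Nb^\Nb$ of a $\lPi^0_1$ (hence $\lDelta^1_1$) set, pull everything back along the $\lDelta^1_1$ bijection $\Nb^\Nb \to \Rb$ of \cref{fact:function-sub} (which, by the preimage clause of that fact, keeps the matrix $\lDelta^1_1$) so that $A$ becomes a double projection along $\Nb^\Nb$ of a $\lDelta^1_1 \subseteq \lSigma^1_1$ set, and then collapse the two $\Nb^\Nb$-projections into one using a computable homeomorphism $\Nb^\Nb \cong \Nb^\Nb \times \Nb^\Nb$, under which $\lPi^0_1$ is preserved. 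Hence $A$ is $\lSigma^1_1$.

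For the forward direction, suppose $A$ is $\lSigma^1_1$. By \cref{defn:eff-proj} together with the standard inclusion $\lPi^0_1 \subseteq \lPi^0_2$ of the effective Borel hierarchy, $A = \pi_{\Nb^\Nb}(C)$ for some $\lPi^0_2$ set $C \subseteq X \times \Nb^\Nb$; write $C = \bigcap_k C_k$ with the $C_k$ uniformly semicomputable, say $C_k = \bigcup_{n \in W_k}(P_{k,n} \times Q_{k,n})$ with $W_k$ computably enumerable uniformly in $k$, each $P_{k,n}$ a basic open subset of $X$, and each $Q_{k,n}$ a basic clopen cylinder of $\Nb^\Nb$ (using that basic opens of $X \times \Nb^\Nb$ are products of basic opens). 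Fix a standard effective homeomorphism $j \colon \Nb^\Nb \to (0,1) \setminus \Qb$ of Baire space onto the irrationals in $(0,1)$ --- for instance the continued-fraction map, which has the crucial feature that each cylinder $Q_{k,n}$ is carried onto $I_{k,n} \cap (\Rb \setminus \Qb)$ for an open interval $I_{k,n}$ whose rational endpoints are computable from a code for $Q_{k,n}$. Put $V_k = \bigcup_{n \in W_k}(P_{k,n} \times I_{k,n})$, which is semicomputable in $X \times \Rb$ uniformly in $k$, and set
\[
  B = \bigl(X \times ((0,1) \setminus \Qb)\bigr) \cap \bigcap_k V_k .
\]
Then $B$ is $\lPi^0_2$: the intersection $\bigcap_k V_k$ is $\lPi^0_2$ by definition, $X \times ((0,1) \setminus \Qb)$ is $\lPi^0_2$ since $\Rb \setminus \Qb = \bigcap_m (\Rb \setminus \{q_m\})$ is a computable countable intersection of semicomputable sets over an enumeration $(q_m)$ of $\Qb$, and $\lPi^0_2$ is closed under finite intersection. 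Moreover, for irrational $r$ one has $r \in I_{k,n}$ iff $j^{-1}(r) \in Q_{k,n}$, so $(x,r) \in B$ iff $r \in (0,1) \setminus \Qb$ and $(x, j^{-1}(r)) \in C$; since $j$ is a bijection onto $(0,1) \setminus \Qb$, projecting along $\Rb$ gives $\pi(B) = \pi_{\Nb^\Nb}(C) = A$.

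I expect the main obstacle to be the effectivity bookkeeping in the forward direction --- both the concrete verification that the continued-fraction map sends basic cylinders to intervals with computably presented endpoints, and the more conceptual point of confirming that this substitution $\alpha \leftrightarrow r$ costs exactly one extra level of the Borel hierarchy, landing one in $\lPi^0_2$ rather than $\lPi^0_1$. That extra level is genuinely unavoidable: the homeomorphic copy of $\Nb^\Nb$ inside $\Rb$ is the irrationals, which are $G_\delta$ but not closed, so the $\lPi^0_1$-type constraint cutting out $C$ must be relativized to this $G_\delta$ set. This is precisely the $\sigma$-compactness phenomenon flagged just before the statement, and it is also why one cannot replace $\lPi^0_2$ by $\lPi^0_1$ here: the projection along the $\sigma$-compact space $\Rb$ of a closed subset of $X \times \Rb$ is always $F_\sigma$, hence far from able to represent an arbitrary analytic set.
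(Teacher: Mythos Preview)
The paper does not prove this statement at all --- it is quoted as a fact with a citation to an exercise in Moschovakis --- so there is no ``paper's own proof'' to compare against. Your argument is essentially the standard one and is correct.

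A few remarks. In the forward direction, your use of the continued-fraction homeomorphism onto the irrationals in $(0,1)$ is exactly the right idea, and the key point you isolate --- that basic cylinders go to sets of the form $I \cap (\Rb\setminus\Qb)$ with $I$ an open interval having computable rational endpoints --- is precisely what makes the construction effective. (Strictly speaking the classical continued-fraction map is a bijection from $(\Nb\setminus\{0\})^{\Nb}$, not $\Nb^{\Nb}$, but the obvious computable shift fixes this.) Your verification that $B$ is $\lPi^0_2$ and that $\pi(B)=A$ is clean.

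In the reverse direction your reasoning is sound but the exposition is slightly tangled: after pulling back along the $\lDelta^1_1$ bijection the matrix is $\lDelta^1_1$, not $\lPi^0_1$, so the remark ``under which $\lPi^0_1$ is preserved'' is attached to the wrong step. What you really need is one more unwinding: the $\lDelta^1_1$ set is $\lSigma^1_1$, hence itself a projection along $\Nb^\Nb$ of a $\lPi^0_1$ set, and \emph{then} you collapse (now genuinely using that the computable homeomorphism $\Nb^\Nb \cong \Nb^\Nb \times \Nb^\Nb$ preserves $\lPi^0_1$). Alternatively, and more in the spirit of the paper's proof of \cref{lem:replace-Baire}, you could simply note that the computable homeomorphism preserves all lightface pointclasses, so projecting a $\lSigma^1_1$ set along $\Nb^\Nb$ stays $\lSigma^1_1$.
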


\begin{lem}\label{lem:replace-Baire}
  For any basic Polish space $X$ and $n \geq 1$, a set $A \subseteq X$ is $\lSigma^1_{n+1}$ if and only if there is a $\lPi^1_n$ set $B \subseteq X \times \Rb$ such that $A = \pi(B)$.
\end{lem}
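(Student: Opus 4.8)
The plan is to reduce the lemma to the official definition of $\lSigma^1_{n+1}$ (\cref{defn:eff-proj}) by transporting everything along a $\lDelta^1_1$ isomorphism between $\Nb^\Nb$ and $\Rb$. Since $\Nb^\Nb$ and $\Rb$ are both uncountable basic Polish spaces, the second half of \cref{fact:function-sub} hands us a $\lDelta^1_1$ bijection $g : \Nb^\Nb \to \Rb$ whose inverse $g^{-1}$ is also $\lDelta^1_1$. I would then observe that the induced maps $G = \mathrm{id}_X \times g : X \times \Nb^\Nb \to X \times \Rb$ and its inverse $G^{-1} = \mathrm{id}_X \times g^{-1}$ are again $\lDelta^1_1$: this is a short unwinding of \cref{defn:pointclass-fun} using the product basis $N(Y \times Z,\cdot)$ and closure of the pointclasses under finite intersection from \cref{fact:Bool}, together with the trivial point that $\mathrm{id}_X$, being computable, is $\lSigma^0_1$ and hence $\lDelta^1_1$. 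The key structural feature is that $G$ and $G^{-1}$ are bijections that commute with the two projections to $X$, i.e.\ the projection $X \times \Rb \to X$ composed with $G$ equals the projection $X \times \Nb^\Nb \to X$.

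With this in place both directions are essentially one line. If $A$ is $\lSigma^1_{n+1}$, fix (by \cref{defn:eff-proj}) a $\lPi^1_n$ set $B' \subseteq X \times \Nb^\Nb$ with $\pi(B') = A$; then $B = G(B')$ is, by bijectivity of $G$, the $G^{-1}$-preimage of $B'$, hence $\lPi^1_n$ by \cref{fact:function-sub}, and since $G$ is surjective and commutes with the projections, $\pi(B) = \pi(B') = A$, giving the required $\lPi^1_n$ set over $\Rb$. Conversely, if $B \subseteq X \times \Rb$ is $\lPi^1_n$ with $\pi(B) = A$, then $B' = G^{-1}(B) \subseteq X \times \Nb^\Nb$ is the $G$-preimage of $B$, hence $\lPi^1_n$ by \cref{fact:function-sub}, and $\pi(B') = \pi(B) = A$ for the same reason, which is exactly the statement that $A$ is $\lSigma^1_{n+1}$. (The hypothesis $n \geq 1$ is only used to ensure that $\lPi^1_n$ is defined and that the preimage clause of \cref{fact:function-sub} applies.)

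The only step needing any care — and therefore the ``main obstacle,'' although it amounts to routine verification rather than a real difficulty — is checking that $G$ and $G^{-1}$ really are $\lDelta^1_1$ functions and that they intertwine the two projections; granting that, the lemma is an immediate corollary of \cref{fact:function-sub}. It is worth flagging that this is precisely the point at which effective descriptive set theory is doing the work: there is no homeomorphism between $\Nb^\Nb$ and $\Rb$ (for instance $\Rb$ is locally compact while $\Nb^\Nb$ is not), so one genuinely needs the $\lDelta^1_1$ isomorphism supplied by \cref{fact:function-sub} rather than a topological one. An alternative argument avoiding the bijection would use a computable continuous surjection $\Nb^\Nb \twoheadrightarrow \Rb$ for one inclusion and the $\lPi^0_2$-graph coding behind \cref{fact:effective-Suslin} for the other, but the symmetric bijection route is cleaner.
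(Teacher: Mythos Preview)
Your proposal is correct and takes essentially the same approach as the paper: transport along a $\lDelta^1_1$ bijection between $\Nb^\Nb$ and $\Rb$ (furnished by \cref{fact:function-sub}), extended by the identity on $X$, and then invoke the preimage clause of \cref{fact:function-sub}. The paper's proof is terser---it simply names the map $g(\langle x,y\rangle) = \langle x,f(y)\rangle$ and appeals to \cref{fact:function-sub} without spelling out the two directions or the verification that $g$ is $\lDelta^1_1$---but the argument is the same.
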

\begin{proof}
  It follows from \cref{fact:function-sub} that there is a $\lDelta^1_1$ bijection $f : \Rb \to \Nb^\Nb$. We can now apply \cref{fact:function-sub} to the $\lDelta^1_1$ map $g : X \times \Rb \to X \times \Nb^\Nb$ defined by $g(\langle x,y \rangle) = \langle x,f(y) \rangle$ to get the required result.
\end{proof}

\Lusin\ and Sierpi\'nski independently showed that one can build `universal $\bSigma^1_n$ sets.' In other words, for any uncountable Polish space $X$, there is a $\bSigma^1_n$ set $A \subseteq X\times X$ such that for every $\bSigma^1_n$ set $W \subseteq X$, there is an $e \in X$ such that $\{x \in X : \langle x,e \rangle \in A\} = W$. This can be used to show that the (boldface and lightface) projective hierarchies are strict (see Figure~\ref{fig:projective-strict}) via a diagonalization argument that is reminiscent of Turing's proof of the unsolvability of the halting problem (which we can rephrase in our language here as the statement that there is a $\lSigma^0_1$ subset of $\Nb$ that is not $\lDelta^0_1$).

Effective descriptive set theory had not been developed yet at the time of \Lusin\ and Sierpi\'nski's result, but the proof of the result is so explicit that the resulting universal set is actually lightface projective, even though it indexes all boldface sets of the same complexity. %

\begin{prop}\label{prop:sigma-universal}
  For any $n$ and any basic Polish space $X$, there is a $\lSigma^1_n$ set $A \subseteq X \times \Rb$ such that for every $\bSigma^1_n$ set $B \subseteq X$, there is an $r \in \Rb$ such that
  \(
    B = \{x \in X : \langle x,r \rangle \in A\}.
  \)
\end{prop}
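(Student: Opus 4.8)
The plan is to prove the proposition by induction on $n \geq 1$ (the projective hierarchy being indexed from $1$), with the inductive hypothesis phrased uniformly over all basic Polish spaces, since the step for $X$ will invoke the case of $X \times \Rb$. The two engines are already available: \cref{fact:effective-Suslin} writes every $\lSigma^1_1$ set as a projection of a $\lPi^0_2$ set along an $\Rb$-coordinate, and \cref{lem:replace-Baire} writes every $\lSigma^1_{n+1}$ set as a projection of a $\lPi^1_n$ set along an $\Rb$-coordinate. So it suffices to (i) produce a lightface universal set for the $\bPi^0_2$ subsets of each basic Polish space, and (ii) show that if there is a $\lSigma^1_n$ set universal for the $\bSigma^1_n$ subsets of $X \times \Rb$, then there is a $\lSigma^1_{n+1}$ set universal for the $\bSigma^1_{n+1}$ subsets of $X$. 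Two auxiliary devices are used throughout: a computable homeomorphism $\Rb \cong \Rb \times \Rb$ (with $\lSigma^0_1$, hence $\lDelta^1_1$, graph) to bundle an existentially quantified witness real together with the index real into a single real, and \cref{fact:function-sub} to see that composing a universal set with such a $\lDelta^1_1$ coordinate change preserves both its lightface level and its universality. (If one prefers to index by $\Nb^\Nb$ in intermediate steps, \cref{fact:function-sub} also lets one transfer the final universal set back to $\Rb$.)

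For the base, I would first build a lightface universal set for open sets. Every open subset of a basic Polish space $X$ is $\bigcup_{n \in S} N(X,n)$ for some $S \subseteq \Nb$, and $S$ is coded by a real $r$ through any fixed effective decoding $n \mapsto \mathrm{bit}_n(r)$. The evaluation set $\mathcal{V} = \{\langle x, r \rangle \in X \times \Rb : \exists n\,(\mathrm{bit}_n(r) = 1 \wedge x \in N(X,n))\}$ is $\lSigma^0_1$: the set $\{\langle x, n \rangle : x \in N(X,n)\}$ is semicomputable (a routine check from the enumerations $N(X,\cdot)$ and the computable decidability of basic-open inclusion), ``$\mathrm{bit}_n(r) = 1$'' is uniformly $\lDelta^0_1$, and semicomputable sets are closed under finite intersections and computable countable unions. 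By construction $\mathcal{V}$ is universal for the $\bSigma^0_1$ subsets of $X$. Intersecting over a real-coded sequence of reals then yields a $\lPi^0_2$ set $\mathcal{W} \subseteq X \times \Rb$, namely $\langle x, r \rangle \in \mathcal{W} \iff \forall k\, \langle x, (r)_k \rangle \in \mathcal{V}$ for a computable decoding $r \mapsto (r)_k$, and $\mathcal{W}$ is universal for the $\bPi^0_2$ subsets of $X$ since every $G_\delta$ set is a countable intersection of open sets. Carrying this out with $X \times \Rb$ in place of $X$ gives the universal $\lPi^0_2$ set needed to start the induction.

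Now the induction. For $n = 1$, regroup a universal $\lPi^0_2$ set as $\mathcal{W} \subseteq (X \times \Rb) \times \Rb$ with the last factor a witness coordinate, and set $A_1 = \{\langle x, r\rangle : \exists y\, \langle x, r, y\rangle \in \mathcal{W}\}$; projecting off the witness coordinate, \cref{fact:effective-Suslin} gives that $A_1$ is $\lSigma^1_1$, and since the boldface form of \cref{fact:effective-Suslin} presents every $\bSigma^1_1$ set as the projection of some $\bPi^0_2$ subset of $X \times \Rb$ --- which is a section $\mathcal{W}_r$ --- the set $A_1$ is universal for the $\bSigma^1_1$ subsets of $X$. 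For the step $n \to n+1$, apply the inductive hypothesis to $X \times \Rb$ to get $A_n$ universal $\lSigma^1_n$ for the $\bSigma^1_n$ subsets of $X \times \Rb$; its complement is $\lPi^1_n$ and universal for the $\bPi^1_n$ subsets of $X \times \Rb$; projecting it off an $\Rb$-coordinate and invoking \cref{lem:replace-Baire} produces a $\lSigma^1_{n+1}$ set $A_{n+1} \subseteq X \times \Rb$, which by the boldface form of \cref{lem:replace-Baire} is universal for the $\bSigma^1_{n+1}$ subsets of $X$. Bundling the quantified real into the index real at each stage (via the computable homeomorphism above) keeps everything of the form $X \times \Rb$.

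I expect the only genuinely delicate point to be the use of the \emph{boldface} forms of \cref{fact:effective-Suslin} and \cref{lem:replace-Baire} in the universality arguments: one must check that a set is $\bSigma^1_n$ exactly when it is $\lSigma^1_n$ relative to some real parameter and that these projection characterizations relativize uniformly in the parameter, so the parameter can be folded into the index real. (Alternatively, one can simply quote the classical \Lusin--Sierpi\'nski universal-set construction and observe, as the surrounding discussion advertises, that it is explicit enough to land in the lightface pointclass.) The rest is the bookkeeping already flagged: verifying that the fixed codings of subsets of $\Nb$ and of sequences of reals by single reals are sufficiently effective that $\mathcal{V}$ and $\mathcal{W}$ are honestly $\lSigma^0_1$ and $\lPi^0_2$ rather than merely boldface --- this is precisely where the explicitness of the enumerations $N(X,\cdot)$ chosen earlier is used.
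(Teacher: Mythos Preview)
Your proposal is essentially correct and reconstructs the standard universal-set construction from scratch, whereas the paper's own proof is a two-line citation: it invokes \cite[Thm.~12.7]{Kanamori2003} (which gives the lightface universal sets over Baire space) and then applies \cref{fact:function-sub} to transfer the result to $\Rb$. So you have taken the longer, self-contained route that unpacks what the cited theorem actually says; this buys independence from the reference at the cost of the bookkeeping you already flagged.

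One genuine slip to fix: there is no ``computable homeomorphism $\Rb \cong \Rb \times \Rb$'' --- these spaces are not homeomorphic at all. What you need (and what \cref{fact:function-sub} actually provides) is a $\lDelta^1_1$ \emph{bijection} with $\lDelta^1_1$ inverse, which suffices for your bundling step and for preserving lightface levels. Relatedly, your coordinate labeling in the base case is a bit tangled (you call the last factor of $\mathcal{W} \subseteq (X \times \Rb) \times \Rb$ the ``witness'' but then quantify it out as $y$ while keeping $r$ as the index), though the intended argument is clear and correct once the roles are straightened out.
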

\begin{proof}
  This follows from \cite[Thm.~12.7]{Kanamori2003} and \cref{fact:function-sub}.
\end{proof}

\begin{defn}
  A set satisfying the conclusion of \cref{prop:sigma-universal} is called a \emph{universal $\bSigma^1_n$ set}.  \emph{Universal $\bPi^1_n$ sets} are defined similarly.
\end{defn}

Note that the complement of any universal $\bSigma^1_n$ set is a universal $\bPi^1_n$ set.


\section{Explicitly choosing things}
\label{sec:explicitly-choosing}

Now that we have introduced some of the basic ideas of effective descriptive set theory, we are in a position to answer the question posed at the end of \cref{sec:indep-results}. Just as with the universal $\bSigma^1_n$ sets in \cref{prop:sigma-universal}, G\"odel's definition is explicit enough that it is possible to show that it is actually $\lDelta^1_2$ rather than just $\bDelta^1_2$.

\begin{fact}[{G\"odel \cite[Thm.~13.9]{Kanamori2003}}]\label{fact:wo-defbl}
  The $G_\delta$ set in \cref{fact:wo-defbl-first} can be chosen to be $\lPi^0_2$. %
\end{fact}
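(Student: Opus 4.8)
The plan is to set aside $G_\delta$ sets for a moment and instead establish the cleanest underlying statement: the relation $\{\langle x,y\rangle \in \Rb^2 : x <_L y\}$ is a $\lSigma^1_2$ subset of $\Rb^2$. Granting this, \cref{fact:wo-defbl} is pure bookkeeping about pointclasses. Unwinding the definitions: by \cref{lem:replace-Baire} a $\lSigma^1_2$ set $B \subseteq \Rb^2$ can be written as $\pi_3(C)$ for some $\lPi^1_1$ set $C \subseteq \Rb^3$; writing $C = \Rb^3 \setminus D$ with $D \subseteq \Rb^3$ a $\lSigma^1_1$ set and applying \cref{fact:effective-Suslin} yields $D = \pi_4(A)$ for some $\lPi^0_2$ set $A \subseteq \Rb^4$; hence $B = \pi_3(\Rb^3 \setminus \pi_4(A))$ with $A$ in $\lPi^0_2$, which is exactly the conclusion of \cref{fact:wo-defbl-first} with the $G_\delta$ set improved to $\lPi^0_2$. (The two projections correspond to the two real-number quantifiers of a $\Sigma^1_2$ formula, and the innermost $\lPi^0_2$ matrix is precisely what \cref{fact:effective-Suslin} says one needs when the projecting coordinate ranges over $\Rb$.)

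So everything comes down to the standard fact---provable already in \ZF, although here we only need it under the hypothesis $\VL$ of \cref{fact:wo-defbl-first}---that $<_L$ is $\lSigma^1_2$. I would prove this by the usual Shoenfield/condensation argument. Fix a finite fragment $T_0$ of \ZF\ large enough that, by G\"odel's condensation lemma, (i) every transitive set model $M$ of $T_0 + (\VL)$ equals $L_\beta$ for some ordinal $\beta$, and (ii) any such $M$ correctly computes an initial segment of the real well-ordering, i.e.\ $<_L^M = {<_L}\res M$. For reals $x,y$ one then has $x <_L y$ if and only if there is a countable well-founded extensional structure $(\Nb,E)$ satisfying $T_0 + (\VL)$ whose transitive collapse contains $x$ and $y$ and satisfies $x <_L y$: the forward direction follows by taking a countable elementary submodel of a sufficiently large $L_\gamma \ni x,y$ and Mostowski-collapsing it, and the backward direction uses (i) and (ii) to transfer the collapsed model's verdict to the genuine $<_L$.

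To read off the complexity of this characterization, code $(\Nb,E)$ by a real $e$. ``$E$ is well-founded'' is $\lPi^1_1$ in $e$; ``$(\Nb,E)$ is extensional and satisfies the finitely many axioms $T_0 + (\VL)$'' is arithmetic in $e$, being first-order satisfaction in a structure on $\Nb$ for a fixed sentence; and ``the transitive collapse of $(\Nb,E)$ contains $x$ and $y$ and satisfies $x <_L y$'' is arithmetic in $\langle e,x,y\rangle$. Thus the condition has the shape $\exists e\,\bigl[\lPi^1_1(e) \wedge (\text{arithmetic in }e,x,y)\bigr]$; since arithmetical sets are $\lDelta^1_1$ and $\lPi^1_1$ is closed under finite intersection and contains all $\lDelta^1_1$ sets (\cref{fact:Bool} and the remark following it), the bracketed relation is $\lPi^1_1$, so one existential real quantifier over it is $\lSigma^1_2$ by \cref{lem:replace-Baire}, completing the reduction.

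The main obstacle is the fine-structural input (i)--(ii): the assertion that a modest finite chunk of \ZF\ together with $\VL$ already pins down an initial segment of the $L$-hierarchy and its canonical well-ordering. This is G\"odel's condensation lemma, and it is where essentially all the mathematical content sits; everything else is quantifier bookkeeping of the sort carried out above. For the record, under $\VL$ the same argument shows $<_L$ is $\lDelta^1_2$---its complement is $\{x = y\} \cup \{y <_L x\}$, again $\lSigma^1_2$---but we do not need this strengthening for \cref{fact:wo-defbl}.
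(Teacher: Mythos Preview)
The paper does not prove this statement; it is stated as a cited Fact (attributed to G\"odel via \cite[Thm.~13.9]{Kanamori2003}) and used as a black box. Your proof sketch is the standard condensation-based argument that appears in the cited reference, and it is correct. Note that the paper actually runs the bookkeeping in the opposite direction from you: in the corollary immediately following \cref{fact:wo-defbl}, it takes the $\lPi^0_2$ set as given and uses \cref{fact:effective-Suslin} and \cref{lem:replace-Baire} to \emph{conclude} that $<_L$ is $\lSigma^1_2$, whereas you establish $\lSigma^1_2$ directly and then unwind to the $\lPi^0_2$ set. Either direction is fine, and your route is arguably cleaner since the $\lSigma^1_2$ statement is the natural target of the condensation argument.
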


\cref{fact:function-sub} immediately gives the following corollary.


%

%

\begin{cor}
  Assuming $\VV= \LL$, for every basic Polish space $X$, there is a $\lDelta^1_2$ well-ordering $<_{\LL}^X$ of $X$.
\end{cor}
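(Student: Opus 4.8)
The plan is to track the effective-descriptive-set-theoretic complexity of $<_L$ on $\Rb$ through a short chain of pointclass operations, and then transfer the result to an arbitrary basic Polish space using the isomorphism and substitution results already available.

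First, I would note that under $\VV = \LL$ we have $\Rb^L = \Rb$, so $<_L$ is a genuine well-ordering of $\Rb$, and \cref{fact:wo-defbl-first,fact:wo-defbl} apply: there is a $\lPi^0_2$ set $A \subseteq \Rb^4$ with $\pi_3(\Rb^3 \setminus \pi_4(A)) = \{\langle x,y\rangle \in \Rb^2 : x <_L y\}$. Since $\lPi^0_2$ sets are $\lDelta^1_1$, in particular $\lSigma^1_1$, \cref{fact:effective-Suslin} (with ambient space $\Rb^3$) shows $\pi_4(A)$ is $\lSigma^1_1$; hence $\Rb^3 \setminus \pi_4(A)$ is $\lPi^1_1$, and by \cref{lem:replace-Baire} with $n = 1$ its projection $\{\langle x,y\rangle : x <_L y\}$ is $\lSigma^1_2$. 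To upgrade this to $\lDelta^1_2$ I would use that $<_L$ is a linear order on $\Rb$: the three sets $\{\langle x,y\rangle : x <_L y\}$, $\{\langle x,y\rangle : y <_L x\}$, $\{\langle x,y\rangle : x = y\}$ partition $\Rb^2$, so $\{\langle x,y\rangle : x <_L y\}$ is the complement of the union of the other two. The set $\{\langle x,y\rangle : y <_L x\}$ is the preimage of $\{\langle x,y\rangle : x <_L y\}$ under the ($\lDelta^1_1$, indeed continuous) coordinate-swap map, hence $\lSigma^1_2$ by \cref{fact:function-sub}; the diagonal is closed, hence $\lDelta^1_1 \subseteq \lSigma^1_2$; so by \cref{fact:Bool} their union is $\lSigma^1_2$ and its complement $\{\langle x,y\rangle : x <_L y\}$ is $\lPi^1_2$. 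Being both $\lSigma^1_2$ and $\lPi^1_2$, it is $\lDelta^1_2$, which is the corollary in the case $X = \Rb$.

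For a general basic Polish space $X$ I would argue by cases. If $X$ is uncountable, \cref{fact:function-sub} gives a $\lDelta^1_1$ bijection $f : X \to \Rb$ with $\lDelta^1_1$ inverse; setting $a <_{\LL}^X b \iff f(a) <_L f(b)$ defines a well-ordering of $X$ (because $f$ is a bijection and $<_L$ well-orders $\Rb$) which, being the preimage of the $\lDelta^1_2$ set $\{\langle x,y\rangle : x <_L y\}$ under the $\lDelta^1_1$ map $\langle a,b\rangle \mapsto \langle f(a),f(b)\rangle$ on $X \times X$, is $\lDelta^1_2$ by \cref{fact:function-sub}. If $X$ is countable, i.e.\ a finite power of $\Nb$, there is a computable bijection of $X$ with $\Nb$, and pulling back the usual (computable, hence $\lDelta^1_1$) well-order of $\Nb$ along it gives, again via \cref{fact:function-sub}, a $\lDelta^1_1 \subseteq \lDelta^1_2$ well-order of $X$.

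The only point that needs any care is the $\lPi^1_2$ bound: the expression $\pi_3(\Rb^3 \setminus \pi_4(A))$ only visibly gives a $\lSigma^1_2$ set, and one must invoke linearity of $<_L$ to see it is $\lPi^1_2$ as well; everything else is bookkeeping with \cref{fact:wo-defbl,fact:effective-Suslin,lem:replace-Baire,fact:function-sub}. One should also keep in mind that for $X \neq \Rb$ the relation produced is \emph{some} $\lDelta^1_2$ well-order of $X$, not literally the restriction of the global $<_L$, which is all the statement asks for.
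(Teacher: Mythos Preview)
Your proposal is correct and follows essentially the same approach as the paper: establish the $\lSigma^1_2$ bound on $<_L \subseteq \Rb^2$ by tracking the projection formula through \cref{fact:wo-defbl}, \cref{fact:effective-Suslin}, and \cref{lem:replace-Baire}, obtain the $\lPi^1_2$ bound from linearity (complement is $\{y <_L x\} \cup \{x=y\}$), handle countable $X$ directly with a computable well-order, and transfer to other uncountable $X$ via the $\lDelta^1_1$ bijections of \cref{fact:function-sub}. Your write-up is slightly more detailed (e.g., explicitly invoking the coordinate-swap map and noting $\Rb^L = \Rb$), but the argument is the same.
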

\begin{proof}
  If $X$ is countable, then there is clearly a computable (i.e., $\lDelta^0_1$) well-ordering, which is therefore also $\lDelta^1_2$.

  If $X = \Rb$, Facts~\ref{fact:effective-Suslin} and \ref{fact:wo-defbl} and \cref{lem:replace-Baire} establish that $<_L$ restricted to $\Rb^2$ is $\lSigma^1_2$. To see that it is also $\Pi^1_2$, note that $x \not <_L y$ if and only if $x = y$ or $y <_L x$. $x=y$ is a $\lPi^0_1$ (and therefore $\lSigma^1_2$) condition, so $x \not <_L y$ is $\lSigma^1_2$ by \cref{fact:Bool}.

  For other uncountable $X$, the result follows from \cref{fact:function-sub}. 
\end{proof}

Note that for powers of $\Rb$ in particular, we could also use the lexicographic order induced by $<_L$ restricted to $\Rb$. For example, we could take $\langle x,y \rangle <_L^{\Rb^2}\langle a,b \rangle$ to mean $x <_L a \vee (x = a \wedge y <_L b)$. The specifics do not matter too much. %

Given a low complexity well-ordering on a Polish space, we now have a low complexity choice function on subsets thereof and can use this to produce low complexity versions of various choicy constructions. Recall that given an equivalence relation $E$ on a set $A$, a \emph{transversal} is a set $B \subseteq A$ such that each equivalence class of $E$ contains exactly one element of $B$.

\begin{lem}\label{lem:transversal}
  Assuming $\VV = \LL$, for any $\lDelta^1_2$ equivalence relation $E$ on a $\lDelta^1_2$ set $B \subseteq \Rb^n$, there is a $\lPi^1_2$ transversal of $E$.
\end{lem}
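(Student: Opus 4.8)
The plan is to take the canonical transversal given by the definable well-order. By the corollary to \cref{fact:wo-defbl} (this is where the hypothesis $\VV = \LL$ enters), there is a $\lDelta^1_2$ well-ordering $<_{\LL}^{\Rb^n}$ of $\Rb^n$. I would then let $T$ consist of those $x \in B$ that are $<_{\LL}^{\Rb^n}$-least inside their own $E$-class:
\[
  T = \bigl\{ x \in B : \forall y \in \Rb^n \,\bigl(\, y \notin B \ \vee\ \neg(y \E x) \ \vee\ \neg(y <_{\LL}^{\Rb^n} x) \,\bigr)\bigr\}.
\]
Since $<_{\LL}^{\Rb^n}$ is a genuine well-ordering of $\Rb^n$, each $E$-class, being a nonempty subset of $\Rb^n$, has a unique $<_{\LL}^{\Rb^n}$-least element; taking $y$ to be that element shows it belongs to $T$, and taking $y$ to be that element again shows that no other member of the class belongs to $T$. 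Hence $T$ meets every class in exactly one point and is a transversal. All the actual work is in the complexity estimate.

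For that, $B$ and $E$ are $\lDelta^1_2$ by hypothesis and $<_{\LL}^{\Rb^n}$ is $\lDelta^1_2$ by the cited corollary, so the matrix $y \notin B \vee \neg(y \E x) \vee \neg(y <_{\LL}^{\Rb^n} x)$ is a finite union of complements of $\lDelta^1_2$ sets, hence itself $\lDelta^1_2$ — in particular $\lPi^1_2$ — by \cref{fact:Bool} and closure under complementation. Prefixing $\forall y \in \Rb^n$ keeps the result $\lPi^1_2$: composing with a $\lDelta^1_1$ bijection $\Rb^n \to \Rb$ via \cref{fact:function-sub} converts this to a single quantifier over $\Rb$, and $\lPi^1_2$ is closed under $\forall^{\Rb}$ — dually, $\lSigma^1_2$ is, by \cref{lem:replace-Baire}, exactly the class of projections along $\Rb$ of $\lPi^1_1$ sets, so it absorbs an extra existential real quantifier once two real quantifiers are packed into one via a $\lDelta^1_1$ pairing bijection $\Rb^2 \to \Rb$. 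Finally $T$ is the intersection of this $\lPi^1_2$ set with $B \in \lDelta^1_2 \subseteq \lPi^1_2$, hence $\lPi^1_2$ by \cref{fact:Bool} again.

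The one point demanding a little care — rather than being genuinely difficult — is the quantifier bookkeeping: making sure the universal quantifier ranges over $\Rb^n$ (handled by the $\lDelta^1_1$ reduction, exactly as in the proof of \cref{lem:replace-Baire}) and that no stray projection creeps in to push the complexity up to $\lSigma^1_3$. Beyond that the lemma is just the observation that the $<_{\LL}$-least representative works, combined with the elementary closure arithmetic of the lightface projective hierarchy.
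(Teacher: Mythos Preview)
Your proof is correct and essentially identical to the paper's: both take the $<_{\LL}^{\Rb^n}$-least representative in each $E$-class and then check that the defining formula is a universal real quantifier applied to a $\lDelta^1_2$ matrix, hence $\lPi^1_2$. The only cosmetic difference is that the paper writes $\xbar = \ybar \vee \xbar <_{\LL}^{\Rb^n} \ybar$ where you write $\neg(\ybar <_{\LL}^{\Rb^n} \xbar)$, and the paper folds the conjunct $x \in B$ inside the universal quantifier rather than intersecting afterward.
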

\begin{proof}
  The set $\{\xbar \in \Rb^n : (\forall \ybar \in \Rb^n)[\xbar \in B \wedge (\ybar \notin B \vee \xbar <_{\LL}^{\Rb^n} \ybar \vee \xbar = \ybar \vee \neg(\xbar \mathrel{E} \ybar))]\}$ defines a transversal of $E$ and is the complement of the projection of a $\lDelta^1_2$ subset of $\Rb^n$ by \cref{fact:Bool}. By \cref{fact:function-sub}, 
  this implies that it is a $\lPi^1_2$ set.
\end{proof}

\begin{lem}\label{lem:Vitali-equiv-basic}
  $[0,1] \subseteq \Rb$ is $\lPi^0_1$ and the equivalence relation $x \mathrel{E} y$ on $[0,1]$ defined by $x-y \in \Qb$ is $\lSigma^0_2$. In particular, both are $\Delta^1_2$.
\end{lem}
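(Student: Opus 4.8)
The plan is to prove the two assertions separately and then read off the ``in particular'' clause. The only conceptual ingredient is an unwinding of \cref{defn:eff-Borel}: a set $A$ in a basic Polish space $X$ is $\lSigma^0_2$ if and only if $X \setminus A$ is $\lPi^0_2$, i.e.\ if and only if $A = \bigcup_{k \in \Nb}\{x \in X : \langle x,k\rangle \in V\}$ for some $\lPi^0_1$ set $V \subseteq X \times \Nb$ (take $V$ to be the complement of the witnessing semicomputable set for $X\setminus A$). So to show a set is $\lSigma^0_2$ it suffices to present it as a computably-indexed countable union of (uniformly) $\lPi^0_1$ sets.

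First I would treat $[0,1]$. Its complement in $\Rb$ is the open set $(-\infty,0) \cup (1,\infty)$, which equals $\bigcup\{N(\Rb,m) : N(\Rb,m) \cap [0,1] = \emptyset\}$, since the fixed enumeration of basic intervals of $\Rb$ contains every bounded open interval with rational endpoints. The endpoints of $N(\Rb,m)$ are rationals that can be computed uniformly from $m$ (they are the explicit expressions in $p$-adic valuations from the definition of $N(\Rb,\cdot)$), so ``$N(\Rb,m) \cap [0,1] = \emptyset$'' is a decidable predicate of $m$; hence the displayed union is semicomputable and $[0,1]$ is $\lPi^0_1$.

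Next I would treat $E$. Fix a computable enumeration $(q_k)_{k \in \Nb}$ of $\Qb$. Then for $\langle x,y\rangle \in \Rb^2$ we have $\langle x,y\rangle \in E$ iff $x,y \in [0,1]$ and $x - y = q_k$ for some $k$, so $E = \bigcup_{k \in \Nb}\{\langle x,y\rangle : \langle x,y,k\rangle \in V\}$ where
\[
  V = \{\langle x,y,k\rangle \in \Rb^2 \times \Nb : x \in [0,1],\ y \in [0,1],\ x - y = q_k\}.
\]
The complement of $V$ is the union of $\{x \notin [0,1]\}$, $\{y \notin [0,1]\}$, and $\{x - y \neq q_k\}$; the first two are semicomputable by the argument of the previous paragraph, and the third is semicomputable uniformly in $k$ because over a basic box $(a_1,b_1) \times (a_2,b_2)$ of $\Rb^2$ one has $x - y < q$ throughout the box exactly when $b_1 - a_2 \le q$, and $x - y > q$ throughout it exactly when $a_1 - b_2 \ge q$, and both conditions are decidable from a code for the rational $q_k$. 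A finite union of semicomputable sets is semicomputable, so $V$ is $\lPi^0_1$, whence $E$ is $\lSigma^0_2$. Finally, ``both are $\lDelta^1_2$'' is immediate from the remark following \cref{defn:eff-Borel} that $\lPi^0_1$ and $\lSigma^0_2$ sets are $\lDelta^1_1$, together with the (standard) inclusion $\lDelta^1_1 \subseteq \lDelta^1_2$.

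There is no real obstacle here: the whole lemma is routine bookkeeping with the encodings fixed earlier. The two points that deserve a moment's care are (i) checking that the chosen enumeration of basic intervals genuinely contains enough intervals to recover $(-\infty,0)\cup(1,\infty)$, and (ii) keeping the decidability in the $E$-argument uniform in $k$. The only genuinely conceptual remark — and the reason the relation lands in $\lSigma^0_2$ rather than $\lSigma^0_1$ — is simply that ``$x - y \in \Qb$'' is naturally a computable countable union of the \emph{closed} conditions ``$x - y = q_k$'' and is not itself open.
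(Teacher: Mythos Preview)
Your proof is correct and follows essentially the same approach as the paper's: both exhibit $E$ as a computably indexed union over a rational enumeration $(q_k)$ of the $\lPi^0_1$ conditions $x-y=q_k$, and both handle $[0,1]$ by noting its complement is semicomputable. The only cosmetic difference is that you fold the $[0,1]^2$ constraint directly into your $\lPi^0_1$ witness $V$, whereas the paper first shows $\{\langle x,y\rangle : x-y\in\Qb\}$ is $\lSigma^0_2$ and then intersects with $[0,1]^2$ via \cref{fact:Bool}.
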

\begin{proof}
  The fact that $[0,1]$ is $\lPi^0_1$ is obvious. It is also straightforward to show that the set $\{\langle x,x+q_n,n \rangle : x \in \Rb,~n \in \Nb\} \subseteq \Rb^2 \times \Nb$ (for any given reasonable enumeration $(q_n)_{n \in \mathbb{N}}$ of $\mathbb{Q}$)  is $\lPi^0_1$, which implies that the set $\{\langle x, y \rangle : x-y \in \Qb\}$ is $\lSigma^0_2$. By \cref{fact:Bool} and the fact that $[0,1]^2$ is also $\lPi^0_1$, this implies that $E$ is $\lSigma^0_2$.
\end{proof}

\begin{prop}\label{prop:Vitali-V-L}
  There is a semicomputable (and therefore open) set $U \subseteq \Rb^3 \times \Nb$ such that, assuming $\VV =\LL$,
  \[
    \Rb \setminus \pi_2\left( \Rb^2 \setminus \pi_3\left( \Rb^3 \setminus \pi_4\left( \Rb^3 \times \Nb \setminus U \right) \right) \right)
  \]
  is a $\lPi^1_2$ Vitali subset of $[0,1]$, where $\pi_4 : \Rb^3 \times \Nb \to \Rb^3$, $\pi_3 : \Rb^3 \to \Rb^2$, and $\pi_2 : \Rb^2 \to \Rb$ are projection maps.
\end{prop}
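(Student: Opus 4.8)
The plan is to first \emph{produce} the Vitali set using the machinery already assembled, and then to \emph{unfold} its $\lPi^1_2$ complexity one quantifier at a time so that the layers of projections and complements match the displayed expression exactly.

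First I would invoke \cref{lem:Vitali-equiv-basic} and \cref{lem:transversal}. By \cref{lem:Vitali-equiv-basic}, $[0,1]$ is $\lDelta^1_2$ and the relation $x \mathrel{E} y \Leftrightarrow x - y \in \Qb$ on $[0,1]$ is $\lDelta^1_2$, so (assuming $\VV = \LL$) \cref{lem:transversal} yields a $\lPi^1_2$ transversal $T \subseteq [0,1]$ of $E$. A transversal of this $E$ is by definition a Vitali subset of $[0,1]$, so the only remaining task is to exhibit a semicomputable $U \subseteq \Rb^3 \times \Nb$ for which $T$ equals the set in the display.

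Next I would peel off the representations in sequence. Since $T$ is $\lPi^1_2$, its complement $\Rb \setminus T$ is $\lSigma^1_2$, so by \cref{lem:replace-Baire} (with $n = 1$) there is a $\lPi^1_1$ set $C \subseteq \Rb \times \Rb = \Rb^2$ with $\Rb \setminus T = \pi_2(C)$, i.e.\ $T = \Rb \setminus \pi_2(C)$. Then $\Rb^2 \setminus C$ is $\lSigma^1_1$, so by \cref{fact:effective-Suslin} there is a $\lPi^0_2$ set $D \subseteq \Rb^2 \times \Rb = \Rb^3$ with $\Rb^2 \setminus C = \pi_3(D)$, i.e.\ $C = \Rb^2 \setminus \pi_3(D)$. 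Finally, unwinding the definition of $\lPi^0_2$ (\cref{defn:eff-Borel}), there is a semicomputable $U \subseteq \Rb^3 \times \Nb$ with $D = \bigcap_{n \in \Nb}\{x \in \Rb^3 : \langle x,n\rangle \in U\}$; taking complements, $D = \Rb^3 \setminus \pi_4(\Rb^3 \times \Nb \setminus U)$ (and $U$, being semicomputable, is in particular open). Substituting these identities back into $T = \Rb \setminus \pi_2(\Rb^2 \setminus \pi_3(D))$ reproduces the displayed expression verbatim.

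I do not expect a substantive obstacle here; the real content is already in \cref{lem:transversal}. The one point requiring care is bookkeeping: one must use the $\Rb$-valued representation theorems (\cref{lem:replace-Baire} and \cref{fact:effective-Suslin}) rather than the raw definitions, so that every projection in the tower is along a copy of $\Rb$ — except the innermost, along $\Nb$, which comes from the $\lPi^0_2$ layer — and one must keep track of which coordinate each $\pi_i$ deletes so that the domains $\Rb^3 \times \Nb \to \Rb^3 \to \Rb^2 \to \Rb$ line up as written.
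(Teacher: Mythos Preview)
Your proposal is correct and matches the paper's proof essentially step for step: obtain the $\lPi^1_2$ Vitali set from \cref{lem:Vitali-equiv-basic} and \cref{lem:transversal}, then unfold via \cref{lem:replace-Baire}, \cref{fact:effective-Suslin}, and the definition of $\lPi^0_2$ in turn. The only difference is cosmetic (you name the intermediate sets $T$, $C$, $D$ rather than $A$, $B$, $C$), and your closing remark about using the $\Rb$-valued representation results so the projections line up is a nice touch.
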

\begin{proof}
  By Lemmas \ref{lem:transversal} and \ref{lem:Vitali-equiv-basic}, we can find a $\lPi^1_2$ Vitali set $A \subseteq [0,1] \subseteq \Rb$. By \cref{lem:replace-Baire}, there is a $\lPi^1_1$ $B \subseteq \Rb^2$ such that $A$ is the complement of $\pi_2(B)$. By \cref{fact:effective-Suslin}, we can find a $\lPi^0_2$ set $C \subseteq \Rb^3$ such that $B$ is the complement of $\pi_3(C)$. By definition, there is a $\lSigma^0_1$ (i.e., semicomputable) set $U \subseteq \Rb^3 \times \Nb$ such that $C$ is the complement of $\pi_4(\Rb^3\times \Nb \setminus U)$. So putting it all together we have that $A = \Rb \setminus \pi_2(B) = \Rb \setminus \pi_2(\Rb^2 \setminus \pi_3(D)) = \Rb \setminus \pi_2(\Rb^2 \setminus \pi_3(\Rb^3 \setminus \pi_4(\Rb^3 \times \Nb \setminus U)))$.
\end{proof}

It should be noted that \cref{prop:Vitali-V-L} is constructive in the sense that there is a specific computer program enumerating the basic open neighborhoods of $U$. Moreover, the set $\Rb \setminus \pi_2\left( \Rb^2 \setminus \pi_3\left( \Rb^3 \setminus \pi_4\left( \Rb^3 \times \Nb \setminus U \right) \right) \right)$ exists in any model of \ZFC,\footnote{Or perhaps it would be more accurate to say that in any model of \ZFC\ there is a set that can be built according to those instructions.} but the next fact implies that it is independent of \ZFC\ whether it actually is a Vitali set.

\begin{fact}[{Krivine \cite{Krivine}}]\label{fact:Krivine-proj-meas}
  \ZFC\ is relatively consistent with the statement ``every (lightface) $\lSigma^1_n$ subset of $\Rb$ is Lebesgue measurable for every $n$.'' 
\end{fact}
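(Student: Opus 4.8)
The plan is to prove this relative consistency by a forcing over a constructible ground model, exploiting the feature that makes the \emph{lightface} projective sets easier to handle than their boldface relatives: they carry no real parameters, so that each of them admits a Suslin representation by a tree that lives low in the cumulative hierarchy of $\LL$ --- well below any inaccessible --- and there are only countably many of them in the first place. By \cref{fact:Godel-L} we may take the ground model $M$ to satisfy $\VV = \LL$. Inside $M$, the lightface $\lDelta^1_2$-good wellordering of $\Rb$ from \cref{fact:wo-defbl} feeds, via the standard scale machinery of \cite{Moschovakis2009-je}, canonical lightface scales on every projective pointclass; from these one extracts, for each $n$, an $M$-cardinal $\lambda_n$ and, for each of the (countably many) lightface $\lSigma^1_n$ subsets $A$ of $\Rb$, a tree $T \in M$ on $\omega \times \lambda_n$ with $A = p[T]$ as computed in $M$. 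The key point is that $\lambda \coloneq \sup_n \lambda_n$ is a fixed $M$-cardinal that is \emph{not} inaccessible in $M$; concretely one may take $\lambda = \aleph_\omega^M$, since in $L$ every projective set is $\aleph_\omega^L$-Suslin.

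First I would force with the L\'evy collapse $\mathrm{Col}(\omega,{<}\lambda)$ over $M$, producing $M[G] \models \ZFC$ in which every $\lambda_n$ has become a countable ordinal. Then I would argue that in $M[G]$ each lightface $\lSigma^1_n$ set $A$ is Lebesgue measurable: the $M$-tree $T$ witnessing $A = p[T]$ is, in $M[G]$, a tree on $\omega \times (\text{a countable ordinal})$, so $p[T]$ is an analytic subset of $\Rb$ in $M[G]$ and hence Lebesgue measurable. Running this for every $n$ and every lightface formula gives that $M[G]$ models ``every lightface $\lSigma^1_n$ set is Lebesgue measurable for every $n$,'' which is the desired relative consistency with \ZFC.

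The main obstacle --- and precisely the point where the lightface case genuinely departs from the boldface one (the latter needing an inaccessible by Shelah's theorem \cite{Shelah1984}) --- is verifying that the Suslin representation stays \emph{correct} after the collapse, i.e.\ that the set defined by the $\lSigma^1_n$ formula in $M[G]$ is still $p[T]^{M[G]}$, despite $M[G]$ containing reals not in $M$ and the collapse having rearranged the cardinals around $\lambda_n$. For $n = 2$ this is just Shoenfield absoluteness and needs no hypotheses. For $n \geq 3$ the Martin--Solovay-style absoluteness one would like is unavailable over $L$ (there are no sharps there), so one must instead lean on the weak homogeneity of the L\'evy collapse together with the fact that the defining formula has \emph{no real parameters}: a Solovay-type symmetry argument then shows that for a random real $r$ over $M[G]$ the assertion ``$r \in A$'' is decided by a condition not depending on the random generic, so that $A$ agrees with a Borel set off a null set. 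Making this precise --- and in particular checking that the failure of inaccessibility of $\lambda$ does no harm exactly because no real parameter ever needs to be absorbed into the collapse --- is the technical heart of the argument, and is where I would expect the real work to lie (and where it would be prudent to follow Krivine's \cite{Krivine} original treatment).
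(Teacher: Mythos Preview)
Your Suslin-representation strategy does not get off the ground for $n\geq 3$, and the issue is more fundamental than the absoluteness problem you flag at the end. The claim that the $\lDelta^1_2$ wellordering of $L$ feeds ``canonical lightface scales on every projective pointclass'' is incorrect: the propagation of scales up the projective hierarchy in \cite{Moschovakis2009-je} requires determinacy hypotheses (or large cardinals), not a definable wellorder. In $L$ one has the $\lSigma^1_2$ scale from Shoenfield and nothing more; there is no scale on $\lPi^1_2$ without something like $0^{\#}$. Of course under $\VV=\LL$ every set of reals is trivially $\aleph_1$-Suslin by \AC\ and \GCH, but such trees carry no absoluteness content whatsoever, so after the collapse $p[T]^{M[G]}$ has no reason to coincide with the set the same $\lSigma^1_n$ formula defines in $M[G]$. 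You correctly identify this as the obstacle, but then the entire Suslin setup and the L\'evy collapse of $\aleph_\omega^L$ are red herrings: they do no work, and your fallback ``Solovay-type symmetry argument'' in the last paragraph is where the whole proof would have to live. That sketch is too vague to assess (in particular it is unclear why the L\'evy collapse is the right forcing, since the argument you gesture at wants random reals over the ground model, not merely over $M[G]$).

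The paper's proof is a one-line reduction and bypasses all of this: Krivine's actual theorem in \cite{Krivine} is that \ZFC\ is consistent with ``every \emph{ordinal-definable} set of reals is Lebesgue measurable,'' and lightface $\lSigma^1_n$ sets are ordinal-definable by inspection (the defining formula has no parameters). No analysis of scales, trees, or which forcing to use is needed here---that work is packaged inside Krivine's result, which is proved by adding random reals over $L$ and exploiting homogeneity, rather than by collapsing cardinals.
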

\begin{proof}
  In \cite{Krivine}, Krivine showed that \ZFC\ is relatively consistent with the statement that all ordinal-definable sets of reals are Lebesgue measurable. $\lSigma^1_n$ sets are clearly ordinal-definable, so the statement follows.
\end{proof}

In particular, this statement in \cref{fact:Krivine-proj-meas} implies that any set defined in the same manner as the set in \cref{prop:Vitali-V-L} is measurable.

The word `lightface' is essential in this statement, however. Solovay famously showed that, assuming the existence of an inaccessible cardinal, there is a model of $\ZF + \DC$ in which all sets of reals are Lebesgue measurable. As an intermediate step in his proof, Solovay also built a model of $\ZFC$ in which all (boldface) projective sets of reals are measurable \cite{Solovay1970}. It was later shown by Shelah that it is not possible to remove the assumption of an inaccessible cardinal from this proof \cite{Shelah1984}. 
This is part of a general theme in set theory, namely that regularity properties of more complicated explicitly definable sets are intimately linked with larger scale large cardinal phenomena.

\begin{fact}[{Shelah \cite{Raisonnier1984,Shelah1984}}]\label{fact:Shelah-inaccess}
  If all (boldface) $\bSigma^1_3$ sets of reals are Lebesgue measurable, then $\aleph_1$ is an inaccessible cardinal in $\LL$. In particular, the statement ``all $\bSigma^1_3$ sets of reals are Lebesgue measurable'' implies the consistency of \ZFC. (See Figure~\ref{fig:Shelah-Solovay}.)
\end{fact}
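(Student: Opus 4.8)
The plan is to prove the contrapositive in a parametrized form: assuming $\aleph_1$ fails to be inaccessible in $\LL$, I will produce a real parameter $a$ and a set of reals that is $\lSigma^1_3$ in $a$ --- hence a boldface $\bSigma^1_3$ set --- and is not Lebesgue measurable, contradicting the hypothesis. Granting this, the ``in particular'' clause is soft: if $\kappa \coloneq \aleph_1$ \emph{is} inaccessible in $\LL$, then, exactly as in the proof of \cref{prop:basic-inacc-indep}, $H_\kappa^{\LL}$ is a model of \ZFC, so \ZFC\ is consistent; and since $\Con(\ZFC)$ is arithmetic, hence absolute between $\LL$ and $V$, it holds in $V$ as well.

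\textbf{Step 1: reducing to a real that collapses $\aleph_1$.} Because \GCH\ holds in $\LL$, an uncountable cardinal of $\LL$ is inaccessible in $\LL$ if and only if it is regular in $\LL$ and a limit cardinal in $\LL$; and $\aleph_1 = \aleph_1^V$ is certainly uncountable and regular in $\LL$. So if $\aleph_1$ is not inaccessible in $\LL$, it fails to be a limit cardinal there, i.e.\ $\aleph_1 = (\kappa^+)^{\LL}$ for some infinite $\LL$-cardinal $\kappa$ (with $\kappa = \omega$ allowed, in which case $\aleph_1 = \aleph_1^{\LL}$). As $\kappa < \aleph_1^V$, the ordinal $\kappa$ is countable in $V$, so fix a real $a$ coding a well-ordering of $\omega$ of order type $\kappa$. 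A short cardinal-arithmetic computation inside $\LL[a]$ --- using that $\LL \subseteq \LL[a]$ already furnishes a surjection $\kappa \to \mu$ for every $\mu$ with $\kappa \le \mu < (\kappa^+)^{\LL}$, while $a$ collapses $\kappa$ to $\omega$ --- shows that $(\kappa^+)^{\LL}$ remains the least uncountable cardinal of $\LL[a]$; that is, $\aleph_1^{\LL[a]} = (\kappa^+)^{\LL} = \aleph_1$.

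\textbf{Step 2: an uncountable $\bSigma^1_2(a)$ set of reals.} Since $\LL[a] \models \GCH$, the set $\Rb \cap \LL[a]$ of reals constructible from $a$ has cardinality $\aleph_1^{\LL[a]} = \aleph_1$, in particular is uncountable; and the predicate ``$x \in \LL[a]$'' is $\lSigma^1_2$ in the parameter $a$ --- this is the relativization of G\"odel's analysis of the constructible well-ordering (\cref{fact:wo-defbl}, as used in \cref{sec:explicitly-choosing}) to the parameter $a$ --- so $\Rb \cap \LL[a]$ is a $\bSigma^1_2$ set. (If it happened that $\Rb \cap \LL[a] = \Rb$, the Fubini argument behind \cref{fact:wo-defbl-first} and \cref{cor:L-non-meas}, relativized to $a$, would already produce a non-measurable $\bSigma^1_2$ set; but in general $\Rb \cap \LL[a]$ is a proper --- possibly Lebesgue-null --- subset of $\Rb$, so a genuinely different construction is required.)

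\textbf{Step 3: Raisonnier's filter, the crux.} From an injective $\omega_1$-enumeration $\{x_\xi : \xi < \omega_1\}$ of an uncountable set $A \subseteq \mathcal{P}(\omega)$, Raisonnier builds a non-principal filter $\mathcal{F}_A$ on $\omega$ extending the cofinite filter, assembled from the combinatorial pattern governing where the $x_\xi$ disagree. The substantial content --- which I expect to be the main obstacle and would not attempt to reconstruct --- is the theorem, due to Raisonnier and building on the theory of rapid filters (Mokobodzki, Talagrand), that $\mathcal{F}_A$ is a \emph{rapid} filter, together with the classical fact that a rapid filter, regarded as a subset of $\mathcal{P}(\omega) \cong 2^\omega$, is not Lebesgue measurable. (The easy half of the latter: a non-principal filter extending the cofinite filter is a tail set, so by the zero--one law it has measure $0$ or $1$, and it cannot have measure $1$ as it is disjoint from its own measure-preserving image under complementation; that it also cannot be null is exactly where rapidity is used.) It then remains to compute complexity: a routine (if not entirely trivial) count, using \cref{fact:Bool} and the $\lSigma^1_2(a)$-definability of $A$, shows that the quantifiers appearing in the definition of $\mathcal{F}_A$ raise its complexity by a single projective level, so $\mathcal{F}_A \subseteq \mathcal{P}(\omega)$ is $\bSigma^1_3$ (indeed $\bDelta^1_3$) in $a$. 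Taking $A = \{y \subseteq \omega : y \in \LL[a]\}$ then produces a non-Lebesgue-measurable $\bSigma^1_3$ set, contradicting the hypothesis and completing the argument. Everything outside Step 3 is absoluteness bookkeeping together with the projective calculus of \cref{sec:eff-desc-set-thy}; the combinatorics underlying the rapidity of the Raisonnier filter is the one genuinely hard ingredient.
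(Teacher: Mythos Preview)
The paper does not give its own proof of this statement: it is recorded as a cited \emph{Fact} (attributed to Shelah, with Raisonnier's simplification) and used as a black box. So there is no ``paper's proof'' to compare against; your outline is essentially a sketch of the Raisonnier argument that the paper's citation \cite{Raisonnier1984} points to.

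That said, your sketch has one omission worth flagging. In Step~3 you assert that the Raisonnier filter $\mathcal{F}_A$ is rapid, but rapidity is not automatic from the mere uncountability of $A$: the proof that $\mathcal{F}_A$ is rapid uses the additional hypothesis that every $\bSigma^1_2(a)$ set of reals is Lebesgue measurable (roughly, one needs measurability of certain auxiliary sets arising in the combinatorial analysis). This is harmless for the overall argument --- the standing hypothesis ``all $\bSigma^1_3$ sets are measurable'' certainly implies ``all $\bSigma^1_2$ sets are measurable'' --- but it should be stated, since without it the Raisonnier construction can fail (e.g.\ under $V=L$ the set $A=\Rb\cap L$ is uncountable and $\lSigma^1_2$, yet there is no rapid filter that is measurable-null, and indeed $\mathcal{F}_A$ is not rapid there). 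Apart from this, your Steps~1--2 and the complexity bookkeeping are correct.
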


\begin{figure}
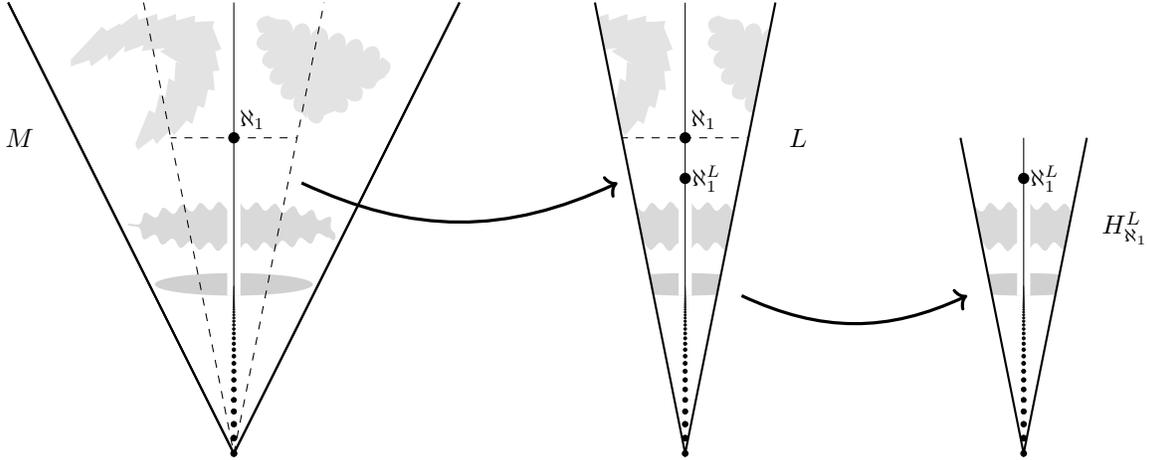

  \centering
  \SolovayInaccFigure
  \caption{If all $\protect\bSigma^1_3$ sets of reals are Lebesgue measurable, then $\aleph_1$ is inaccessible{\protect\footnotemark} in $L$, so $H_{\aleph_1}^L$ is a model of \ZFC.}
    \label{fig:Shelah-Solovay}
  \end{figure}
  \footnotetext{Note though that $\aleph_1^M$ can never be inaccessible in $M$, since the category of countable sets is not a Grothendieck universe. In the same way, $\aleph_1^L$ can never be inaccessible in $L$.}

  As noted by Shelah in \cite{Shelah1984}, \cref{fact:Shelah-inaccess} goes through in $\ZF + \AC_{\aleph_0}$ (where $\AC_{\aleph_0}$ is the axiom of countable choice). Moreover, it's known that far less than $\ZF$ itself is actually required for the proof and that it goes through in $\BZ + \AC_{\aleph_0}$ (where \BZ\ is `bounded Zermelo set theory'). 
  
  Note that \cref{fact:Shelah-inaccess} \emph{doesn't} say that if all $\bSigma^1_3$ sets of reals are Lebesgue measurable, then there is an inaccessible cardinal. This is provably impossible. A statement that only quantifies over reals and sets of reals (or other collections of uniformly small objects) cannot unconditionally imply the existence of an inaccessible cardinal because such a statement will still hold in $H_\kappa$, where $\kappa$ is the smallest inaccessible cardinal. Note that this is essentially the same argument as in the proof of \cref{prop:basic-inacc-indep}.


\section{Actually writing it down}
\label{sec:construction}
\subsection{Hilbert's tenth} 
\label{sec:Hilbert}

We will of course be needing the famous negative resolution of Hilbert's tenth problem. This is by far the best known part of this story and is already covered by many accessible expository accounts, such as \cite{Murty2019-pi}.

For the moment, we will adopt the convention of writing functions with a semicolon to indicate that certain variables are meant to be interpreted as parameters. For example, $p(\xbar;n,m)$ in the following fact is strictly speaking a polynomial in the variables $\xbar$, $n$, and $m$, but we are thinking of $\xbar$ as \emph{unknowns} and of $n$ and $m$ as \emph{parameters}. In particular, when we talk about the `number of variables' of a polynomial, were are only talking about those before the semicolon.

\begin{fact}[Matiyasevich--Robinson--Davis--Putnam]\label{fact:MRDP}
  There is a polynomial $p(\xbar;n,m)$ with integer coefficients such that for every computably enumerable set $W$ there is a natural number $m$ such that for every $n$, $n \in W$ if and only if there is a solution to $p(\xbar;n,m) = 0$ in the natural numbers.
\end{fact}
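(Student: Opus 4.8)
The plan is to prove the stronger assertion that a relation on the natural numbers is computably enumerable if and only if it is \emph{Diophantine} — definable in the form $\some{\xbar \in \Nb^k} q = 0$ for an integer polynomial $q$ — and then read the universal polynomial off of a universal c.e.\ set. One direction is routine: a Diophantine set is c.e.\ because one can dovetail a search over all tuples $\xbar \in \Nb^k$. The content is the converse, that every c.e.\ set is Diophantine.

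First I would reduce an arbitrary c.e.\ set to \emph{Davis normal form}. Starting from a $\Sigma_1$ definition and using G\"odel's $\beta$-function to contract blocks of bounded existential quantifiers, one shows that every c.e.\ $W \subseteq \Nb$ can be written as
\[
  n \in W \iff \some{y}\all{k \le y}\some{\xbar}\; q(n,y,k,\xbar) = 0
\]
for a suitable integer polynomial $q$. This step is elementary — it is essentially the 1950s work of Davis, Putnam, and Robinson — but bookkeeping-heavy. The real task is then to eliminate the single bounded universal quantifier $\all{k \le y}$ and collapse the whole expression to a purely existential one. The standard strategy is to encode, with boundedly many natural-number parameters, the entire finite list of witnessing tuples $\xbar$ as $k$ ranges over $\{0,\dots,y\}$; carrying this out inside the Diophantine framework forces one to know that exponentiation, factorials, and binomial coefficients are all Diophantine.

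Thus the heart of the proof — and the step I expect to be the main obstacle — is showing that the graph $\{\langle a,b,c\rangle : c = a^b\}$ is Diophantine. Following Julia Robinson, I would first reduce this to producing \emph{any} Diophantine relation of ``exponential growth'' (bounded above by $a^a$ yet eventually larger than $a^k$ for every $k$); from such a relation one can, by an ingenious but elementary argument, define $a^b$ exactly. The missing ingredient, supplied by Matiyasevich, is that the solutions $\langle x_t,y_t\rangle$ of the Pell equation $x^2 - (a^2-1)y^2 = 1$ give such a relation: the numbers $y_t$ grow geometrically in $t$, obey divisibility laws such as $y_s \mid y_t \iff s \mid t$, and satisfy congruences modulo $a-1$ that recover the index $t$ — all proved by induction on the linear recurrence defining the Pell solutions. (Equivalently one may run the argument with Fibonacci numbers in place of Pell solutions.)

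Once exponentiation is Diophantine, G\"odel-style coding of finite sequences becomes Diophantine, the bounded universal quantifier in Davis normal form is eliminated, and every c.e.\ set — indeed every c.e.\ relation on a power of $\Nb$ — is Diophantine. To obtain the universal polynomial, fix a universal c.e.\ set $U \subseteq \Nb^2$, so that every c.e.\ $W \subseteq \Nb$ is $\{n : \langle n,m\rangle \in U\}$ for some $m$. Applying the equivalence just proved to $U$ produces an integer polynomial $p(\xbar;n,m)$ with $\langle n,m\rangle \in U$ iff $\some{\xbar \in \Nb} p(\xbar;n,m) = 0$, which is exactly the asserted statement with $m$ playing the role of the parameter. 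Since every step is effective, $p$ can in principle be written down explicitly, a point the paper returns to later.
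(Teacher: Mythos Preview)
Your outline is a correct sketch of the standard proof of the MRDP theorem: Davis normal form, elimination of the bounded universal quantifier via sequence coding, Robinson's reduction of exponentiation to any Diophantine relation of roughly exponential growth, Matiyasevich's supply of such a relation through Pell (or Fibonacci) sequences, and then pulling a universal polynomial off of a universal c.e.\ set. Nothing in it is wrong.

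However, the paper does not prove this statement at all. It is recorded as a \emph{Fact} with attribution to Matiyasevich--Robinson--Davis--Putnam, and the surrounding text explicitly says ``This is by far the best known part of this story and is already covered by many accessible expository accounts, such as \cite{Murty2019-pi}.'' So there is no proof in the paper to compare against; your sketch simply goes well beyond what the paper attempts, supplying the content that the paper deliberately outsources to the literature.
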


Polynomials with this property are called \emph{universal}. We will need a very slightly more specific property which is usually guaranteed in the construction of universal polynomials. To match the convention of \cite{Jones1982}, we will also allow $m$ to be a tuple of parameters rather than a single parameter.

\begin{figure}
  \centering
\begin{align*}
  \hphantom{+}&(x_1x_2x_3^2 + x_4 - (x_5-nm_3)x_6^2)^2 + (x_6 - x_5^{5^{60}})^2 + (x_7+x_6^4 - 1 - x_7 x_5^5)^2 \\
  +&(x_8 + 2m_1 - x_5^5)^2 + (x_2 - m_2 -x_9x_8)^2 + (x_1 - m_3 - x_{10}x_8)^2 + (x_{11} - x_6^{16})^2 \\
  +& ([x_3+x_1x_6^3 + x_2x_6^5 + (2(x_1-m_1x_7)(1+nx_5^5+x_3)^4+x_7 x_5^5 + x_7 x_5^5x_6^4)x_6^4][x_{11}^2-x_{11}] \\
              &\quad +[x_6^3-x_5x_2+x_2+x_8x_7 x_6^3 + (x_5^5 - 2)x_6^5][x_{11}^2-1]- x_{12})^2 \\
  +& (x_{13} - 2x_{14}x_{15}^2x_{12}^2x_{11}^2)^2 + (x_{13}^2x_{16}^2 - x_{16}^2 + 1 - x_{17}^2)^2  \\
  +& (4(x_{18}-x_{16}x_{15}x_{11}^2)^2 + x_{19} - x_{16}^2)^2 + (x_{16} - x_{12} - 1 - x_{20}x_{13} + x_{20})^2 \\
  +& (x_{21} - (x_{14}x_{11}^2 + 1)x_{12}x_{15}x_{11}^2)^2 + (x_{18} - 2x_{12} - 1 - x_{22})^2 \\
  +& (x_{23} - x_5x_{14} - x_{18}x_{21} + 2x_{18} - 4 x_{21} x_{24} + 5 x_{24})^2 \\
     +& (x_{23}^2 - (x_{21}^2 - 1) x_{18}^2 - 1)^2  + (x_{25}^2 - (x_{21}^2 - 1)x_{26}^2x_{18}^4 - 1)^2  \\
  +& ((x_{23}+x_{27}x_{25})^2 - ((x_{21}+x_{25}^2(x_{23}^2-x_{21}))^2 - 1)(2x_{12}+1+x_{28}x_{18})^2 - 1)^2
\end{align*}
\caption{Jones's universal polynomial{\protect\footnotemark}} %
  \label{fig:equation}
\end{figure}

\footnotetext{Note that \cite{Jones1982} takes the natural numbers to be the positive integers. To use Jones's polynomial with the convention $0 \in \Nb$, it would be necessary to replace each variable $x$ with $x+1$.}

\begin{defn}
  A polynomial $p(\xbar;n,\mbar)$ with integer coefficients is \emph{positively universal} if for every computably enumerable set $W$, there is a tuple of natural numbers $\mbar$ such that
  $p(\xbar;n,\mbar) \geq 0$ for any natural numbers $\xbar$ and $n$ and 
  for every $n$, $n \in W$ if and only if there is a solution to $p(\xbar;n,\mbar) = 0$ in the natural numbers.
\end{defn}
Obviously any universal polynomial $p$ yields a positively universal polynomial $p^2$ with twice the degree, but universal polynomials are typically constructed as a system of polynomials which are then combined as a sum of squares. This is almost certainly true of all known constructions of universal polynomials.

The original proof of the existence of universal polynomials, while constructive, didn't do much to control the degree or number of unknowns of the resulting polynomial. There doesn't even seem to be a published estimate of how big the original polynomial would be. Robinson and Matiyasevich, jointly and separately, produced a series of proofs reducing the needed number of unknowns, with Matiyasevich eventually reducing it to $9$ in an unpublished proof. Jones published this proof along with an analysis (some of which was performed by Wada) of various universal degree-unknown-number pairs, reporting the existence, for example, of a degree $4$ positively universal polynomial in $58$ unknowns and a $9$-unknown positively universal polynomial of degree $47216\cdot 5^{58} + 9728 \approx 1.638 \times 10^{45}$ \cite{Jones1982}.  Unfortunately Jones omits a fair amount of details of these constructions and only explicitly gives the polynomial $p(x_1,\dots,x_{28};n,m_1,m_2,m_3)$ of degree $2\cdot 5^{60}$ shown in Figure~\ref{fig:equation} along with the general technique for constructing universal polynomials with $9$ variables. 
Focusing on this polynomial is reasonable, as it has the virtue of being short enough to actually write on one page, but it's unclear how difficult it would be to fully replicate the other results reported by Jones. Using a standard technique of encoding intermediate calculations in extra variables \cite[Thm.~6.2]{Murty2019-pi}, it is possible to convert the polynomial in Figure~\ref{fig:equation} to an equivalent one of degree $4$ but with roughly $\log_2 5^{60} \approx 140$ variables.
%
%
%

%
%
%
%
%
%
%
%
%
%
%
%

\subsection{Polynomial engineering}

Now we have almost all of the ingredients we need to arrive at our main results. One last fact is the observation, originally due to Cantor, that there is a polynomial pairing function on $\Nb$.

\begin{fact}[Cantor]
  The polynomial $\frac{1}{2}(x+y)(x+y+1) + y$ defines a bijection between the set of pairs of non-negative integers and the set of non-negative integers. In particular, the polynomial $J_2(x,y) \coloneq (x+y)(x+y+1) + 2y$ is an injection from $\Nb^2$ into $\Nb$.
\end{fact}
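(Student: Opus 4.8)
The plan is to identify the map $P(x,y) \coloneq \tfrac{1}{2}(x+y)(x+y+1)+y$ with the classical diagonal enumeration of $\Nb^2$ and verify the bijection claim by a direct argument. First I would check that $P$ is a well-defined map $\Nb^2 \to \Nb$: one of the consecutive integers $x+y$ and $x+y+1$ is even, so $\tfrac{1}{2}(x+y)(x+y+1)$ equals the $s$-th triangular number $T_s \coloneq \tfrac{1}{2}s(s+1) \in \Nb$ with $s \coloneq x+y$, and hence $P(x,y) = T_s + y \in \Nb$.

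The key structural observation is that $0 \le y \le s$ forces $T_s \le P(x,y) \le T_s + s = T_{s+1} - 1 < T_{s+1}$, using the identity $T_{s+1} - T_s = s+1$. Since the sequence $(T_s)_{s \in \Nb}$ is strictly increasing, the value $P(x,y)$ determines $s$ as the unique index with $T_s \le P(x,y) < T_{s+1}$, after which $y = P(x,y) - T_s$ and $x = s - y$ are recovered; this yields injectivity. For surjectivity, given $n \in \Nb$, the facts $T_0 = 0$, $T_{s+1} > T_s$, and $T_s \to \infty$ produce a unique $s$ with $T_s \le n < T_{s+1}$; then $y \coloneq n - T_s$ lies in $\{0,\dots,s\}$, so $x \coloneq s - y \ge 0$, and $P(x,y) = T_s + y = n$.

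For the final clause, I would simply note the identity $J_2(x,y) = (x+y)(x+y+1) + 2y = 2\,P(x,y)$; since $P$ is a bijection onto $\Nb$ and $n \mapsto 2n$ is injective on $\Nb$, the composite $J_2$ is an injection $\Nb^2 \to \Nb$ (in fact with image exactly the even natural numbers).

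There is essentially no obstacle here; the only point requiring care is keeping the inequalities $T_s \le P(x,y) < T_{s+1}$ sharp, which reduces entirely to the two elementary facts $T_{s+1} - T_s = s+1$ and $y \le s$.
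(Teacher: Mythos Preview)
Your argument is correct and is the standard proof of this classical result. Note that the paper itself does not supply a proof of this Fact at all; it is simply stated and attributed to Cantor. Your write-up via triangular numbers $T_s$, recovering $s$ from the inequalities $T_s \le P(x,y) < T_{s+1}$ and then reading off $y$ and $x$, is exactly the usual justification, and the observation $J_2 = 2P$ cleanly handles the final clause.
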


For each $n \geq 2$, let $J_{n+1}(x_1,\dots,x_{n+1}) \coloneq J_2(J_n(x_1,\dots,x_n),x_{n+1})$. Note that each $J_n$ is a polynomial of degree $n$ with integer coefficients which moreover defines an injection from $\Nb^n$ to $\Nb$.

\begin{lem}\label{lem:balls}
  For any $n$, any sequence of rational pairs $(a_i,b_i)_{1 \leq i \leq n}$ with $a_i < b_i$ for each $i$, any real vector $\langle x_1,\dots,x_n \rangle \in \prod_{i=1}^n(a_i,b_i)$, and any $s \in \Rb$, there are natural numbers $k_1,\dots,k_{3+n}$ such that
  \[
    k_1^2k_2 - k_3 \sum_{i = 1}^n (k_1x_i -k_{3+i}+k_3)^2 > s
  \]
  but $k_1^2k_2 - k_3 \sum_{i = 1}^n (k_1z_i -k_{3+i}+k_3)^2 < 0$ for any $\langle z_1,\dots,z_n \rangle \notin \prod_{i=1}^n (a_i,b_i)$.
\end{lem}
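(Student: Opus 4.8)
The plan is to read the expression
\[
  P(z_1,\dots,z_n)\coloneq k_1^2k_2-k_3\sum_{i=1}^n(k_1z_i-k_{3+i}+k_3)^2
\]
as a paraboloid in $z$, opening downward, whose shape is governed by the $k_j$, and to tune the $k_j$ so that $P$ becomes a tall, narrow bump centred near $x$ and supported inside the box $\prod_{i=1}^n(a_i,b_i)$. Writing $m_i\coloneq k_{3+i}-k_3$ and $w_i\coloneq m_i/k_1$, the key identity is $k_1z_i-k_{3+i}+k_3=k_1(z_i-w_i)$, so $P(z)=k_1^2\bigl(k_2-k_3\|z-w\|^2\bigr)$ with $\|\cdot\|$ the Euclidean norm on $\Rb^n$. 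Thus $P$ is strictly positive precisely on the open ball of radius $\sqrt{k_2/k_3}$ about $w$, attains its maximum $k_1^2k_2$ at $w$, and is strictly negative off the closure of that ball. So it suffices to make $w$ close enough to $x$ that $P(x)>s$ and to make the radius $\sqrt{k_2/k_3}$ small enough (with $w$ still close to $x$) that the closed ball lies inside the box.

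Concretely I would set $k_2\coloneq1$, put $M\coloneq\max_{1\le i\le n}|x_i|$ and $\delta\coloneq\min_{1\le i\le n}\min(x_i-a_i,\,b_i-x_i)$ (which is positive because $x$ lies in the open box), and introduce one large parameter $k_1\in\Nb$ to be pinned down at the end. I would take $m_i$ to be an integer nearest $k_1x_i$, so that $|k_1x_i-m_i|\le\tfrac12$ and hence $|w_i-x_i|\le\tfrac1{2k_1}$, set $k_3\coloneq\lceil k_1(M+1)\rceil$, and set $k_{3+i}\coloneq m_i+k_3$ for $1\le i\le n$. Since $m_i\ge k_1x_i-\tfrac12$ and $k_3\ge k_1(M+1)$ we get $k_{3+i}\ge k_1(x_i+M)+k_1-\tfrac12>0$ (as $x_i+M\ge0$ and $k_1\ge1$), so all of $k_1,\dots,k_{3+n}$ are genuine natural numbers; the $+k_3$ shift is there precisely to absorb possibly-negative $x_i$.

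It then remains to verify the two inequalities, both of which hold once $k_1$ is large. For the first, $\|x-w\|^2\le n/(4k_1^2)$, so $P(x)=k_1^2-k_3k_1^2\|x-w\|^2\ge k_1^2-\tfrac{n}{4}k_3\ge k_1^2-\tfrac{n}{4}\bigl(k_1(M+1)+1\bigr)$, and the right-hand side tends to $+\infty$, so it exceeds $s$ for $k_1$ large. For the second, if $\langle z_1,\dots,z_n\rangle\notin\prod_{i=1}^n(a_i,b_i)$ then some coordinate satisfies $z_i\le a_i$ or $z_i\ge b_i$, whence $|z_i-x_i|\ge\delta$ by the choice of $\delta$, so $|z_i-w_i|\ge\delta-\tfrac1{2k_1}$ and $\|z-w\|^2\ge(\delta-\tfrac1{2k_1})^2$; since $k_3\ge k_1$ this gives $k_3\|z-w\|^2\ge k_1(\delta-\tfrac1{2k_1})^2$, which again tends to $+\infty$, so $k_3\|z-w\|^2>k_2$ and hence $P(z)=k_1^2(k_2-k_3\|z-w\|^2)<0$ for $k_1$ large. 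Fixing a single $k_1$ beyond all the finitely many ``sufficiently large'' thresholds produced above yields the desired $k_1,\dots,k_{3+n}$.

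The only point needing care is the bookkeeping of the order of the choices against the competing growth rates: $k_3$ has to be large enough to shrink the bump's radius $\sqrt{k_2/k_3}$ below $\delta$ and to keep the shifted quantities $k_{3+i}$ non-negative, yet must not grow so fast that the term $k_3k_1^2\|x-w\|^2$ overwhelms the peak value $k_1^2k_2$ at $z=x$. Letting $k_3$ grow only linearly in $k_1$ while the peak $k_1^2k_2=k_1^2$ grows quadratically and the miss-distance $\|x-w\|^2$ decays like $k_1^{-2}$ resolves this, and nothing deeper is involved.
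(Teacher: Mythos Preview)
Your proof is correct and takes essentially the same approach as the paper: identify the expression as $k_1^2\bigl(k_2-k_3\|z-w\|^2\bigr)$, place the resulting closed ball inside the box with $x$ in its interior, and then scale up. The paper's only simplification is the degree-$3$ homogeneity $r(\bar{x},t\bar{k})=t^3\,r(\bar{x},\bar{k})$, which cleanly decouples ``find a good ball'' from ``make the value at $x$ large'' (just multiply all $k_j$ by a large integer $t$), whereas you tie everything to a single large $k_1$ and track the competing growth rates by hand.
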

\begin{proof}
  Let $r(\xbar,\kbar) = k_1^2k_2 - k_3 \sum_{i = 1}^n (k_1x_i -k_{3+i}+k_3)^2$. We clearly have that if $k_1$ and $k_3$ are positive, then the set of $\xbar$ for which $r(\xbar,\kbar) \geq 0$ is the closed Euclidean ball with center $\left\langle \frac{k_{4}-k_3}{k_1},\dots, \frac{k_{3+n}-k_3}{k_1}\right\rangle$ and radius $\sqrt{\frac{k_2}{k_3}}$. For any $\xbar \in \prod_{i=1}^n(a_i,b_i)$, we can find such a ball $B$ such that $B$ is a subset of $\prod_{i=1}^n(a_i,b_i)$ and $\xbar$ is in the interior of $B$. Moreover, we have that $r(\xbar,t\kbar) = t^3r(\xbar,\kbar)$. Therefore we can make the particular value of $r(\xbar,\kbar)$ arbitrarily large without changing the set $\{\zbar \in \Rb^n : r(\zbar,\kbar) \geq 0\}$. 
\end{proof}

\begin{lem}\label{lem:polynomial-engineering}
  Fix a positively universal polynomial $q(y_1,\dots,y_\nu;\ell,\mbar)$ of degree $\delta$. For any semicomputable set $U \subseteq \Rb^a\times \Nb$, there is a tuple $\mbar$ of natural numbers such that the polynomial
  \[
    p(x_1,\dots,x_a,n,y_1,\dots,y_\nu,\ell_1,\dots,\ell_{3+a},k_1,\dots,k_{3+a})
  \]
   of degree $\max\{3+\delta,7\}$ given by
  \begin{gather*}
k_1^2k_2\left(1-q(\ybar;\ell_{3+a},\mbar)-(\ell_1-J_2(n,k_1))^2-\sum_{i=1}^{2+a}(\ell_{i+1}-J_2(\ell_i,k_{i+1}))^2\right) 
    -k_3\sum_{i = 1}^a (k_1x_i-k_{3+i}+k_2)^2
  \end{gather*}
  satisfies that
  \[
    \sup_{\ybar\in\Nb^\nu,\ellbar \in \Nb^{3+a},\kbar \in \Nb^{3+a}}p(\xbar,n,\ybar,\ellbar,\kbar) =
    \begin{cases}
      1 & n = 0 \\
      +\infty & n > 0,~\langle \xbar,n - 1 \rangle \in U \\
      0 & n > 0,~\langle \xbar,n - 1 \rangle \notin U
    \end{cases}
  \]
  for any $\xbar \in \Rb^a$ and $n \in \Nb$.
\end{lem}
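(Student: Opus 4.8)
The plan is to read off a computably enumerable set $W$ from the given presentation of $U$, feed it into the positively universal polynomial $q$ to obtain the parameters $\mbar$, and then check that the resulting $p$ behaves as stated. Throughout, abbreviate $S := 1 - q(\ybar;\ell_{3+a},\mbar) - (\ell_1-J_2(n,k_1))^2 - \sum_{i=1}^{2+a}(\ell_{i+1}-J_2(\ell_i,k_{i+1}))^2$ and $Q := \sum_{i=1}^a(k_1x_i - k_{3+i}+k_2)^2$, so that $p = k_1^2k_2 S - k_3 Q$. Since $q \ge 0$ on naturals and the other subtracted terms are squares of integers, $S$ is an integer with $S \le 1$, and $S = 1$ forces $\ell_1 = J_2(n,k_1)$ and $\ell_{j+1} = J_2(\ell_j,k_{j+1})$ for $1 \le j \le 2+a$, hence $\ell_{3+a} = J_{4+a}(n,k_1,\dots,k_{3+a})$; by injectivity of $J_{4+a}$ this value codes the pair $(n,\kbar)$, and $q(\ybar;\ell_{3+a},\mbar)=0$ is solvable in $\ybar$ exactly when $\ell_{3+a} = J_{4+a}(n,\kbar) \in W$. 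When $S = 1$ and $k_1,k_3 > 0$ we compute $p = k_1^2 k_3\bigl(\rho^2 - \|\xbar - c\|^2\bigr)$, where $c = \bigl((k_{3+i}-k_2)/k_1\bigr)_{i=1}^a$ and $\rho = \sqrt{k_2/k_3}$ — exactly the centre and radius of the closed ball produced by the computation in \cref{lem:balls} — so that $p$ is positive, zero, or negative according to whether $\xbar$ lies inside, on, or outside that ball. Finally $p = 0$ whenever $\kbar = 0$, so $\sup p \ge 0$ always.

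Write $U = \bigcup_{j \in W_0} N(\Rb^a\times\Nb, j)$ for a computably enumerable $W_0$; each neighborhood is $N(\Rb^a\times\Nb, j) = B_j \times \{m_j\}$ with $B_j$ a product of $a$ rational open intervals and $m_j \in \Nb$, both computable from $j$. I would let $W$ contain the single natural $J_{4+a}(0,1,1,0,\dots,0)$, and, for $n > 0$, contain $J_{4+a}(n,\kbar)$ precisely when $k_1 > 0$, $k_3 > 0$, and there is a $j \in W_0$ with $m_j = n-1$ whose box $B_j$ contains the closed Euclidean ball of centre $\bigl((k_{3+i}-k_2)/k_1\bigr)_i$ and radius $\sqrt{k_2/k_3}$. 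The inclusion of this ball in $B_j$ reduces to finitely many inequalities among rationals and $k_2/k_3$ and is decidable, while ``$j \in W_0$'' is semidecidable, so dovetailing over all $(n,\kbar,j)$ enumerates $W$; hence $W$ is c.e. Applying \cref{fact:MRDP} in its positively universal form to $W$ gives $\mbar$ with $q(\ybar;\ell,\mbar) \ge 0$ for all naturals and $\ell \in W \iff \exists\,\ybar\in\Nb^\nu\ q(\ybar;\ell,\mbar) = 0$. The degree of $p$ is $\max\{3+\delta,7\}$ by inspection: $k_1^2 k_2$ times a $J_2$-square is degree $7$, $k_1^2 k_2$ times $q$ is degree $3+\delta$, and $k_3$ times $(k_1x_i-\cdots)^2$ is degree $5$.

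It remains to verify the three cases for arbitrary $\xbar \in \Rb^a$. If $n = 0$, then $S = 1$ forces $J_{4+a}(0,\kbar) \in W$, i.e.\ $\kbar = (1,1,0,\dots,0)$, giving $p = 1$; that $\kbar$ does make $S = 1$ attainable (set the $J$-chain and pick $\ybar$ with $q = 0$), and $S \le 0$ gives $p \le 0$, so $\sup p = 1$. If $n > 0$ and $\langle\xbar,n-1\rangle \in U$, pick $j \in W_0$ with $m_j = n-1$ and $\xbar \in B_j$; since $B_j$ is open, there is a rational closed ball $B(c,\rho) \subseteq B_j$ with $\xbar$ in its interior, and a common-denominator choice realises it as the ball associated to some $\kbar$ with $k_1,k_3 > 0$, so $J_{4+a}(n,\kbar) \in W$ and $S = 1$ is attainable with $p(\xbar,\dots) = k_1^2 k_3(\rho^2 - \|\xbar - c\|^2) > 0$; replacing $\kbar$ by $t\kbar$ leaves $c,\rho$ (hence membership in $W$) unchanged but multiplies this value by $t^3$, so $\sup p = +\infty$. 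If $n > 0$ and $\langle\xbar,n-1\rangle \notin U$, then $S = 1$ would force $J_{4+a}(n,\kbar) \in W$, hence the associated closed ball is contained in some $B_j$ with $m_j = n-1$; but $B_j \times \{n-1\} \subseteq U$, so $\xbar \notin B_j$ and thus $\xbar$ lies outside that ball, forcing $p < 0$. Since also $S \le 0$ gives $p \le 0$ and $p = 0$ is attainable, $\sup p = 0$.

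The main obstacle is the middle step: isolating the precise c.e.\ set $W$ (restrictive enough at $n = 0$ to keep the supremum exactly $1$, permissive enough at $n > 0$ to reach $+\infty$ on all of $U$ and nothing more) and verifying it is computably enumerable, together with the \cref{lem:balls}-style ball arithmetic — realising an arbitrary rational ball as the ball attached to some integer $\kbar$ with $k_1,k_3>0$, and scaling $\kbar$ to inflate the value of $p$ while fixing the ball and hence the membership in $W$.
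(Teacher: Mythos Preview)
Your proof is correct and follows essentially the same strategy as the paper: encode a carefully chosen c.e.\ set of tuples $(n,\kbar)$ via $J_{4+a}$, feed it into the positively universal polynomial to obtain $\mbar$, and then split into the cases $S=1$ versus $S\le 0$ to read off the supremum. The only differences from the paper are cosmetic: you take the $n=0$ witness to be $\kbar=(1,1,0,\dots,0)$ (giving $p=k_1^2k_2\cdot 1=1$ directly) rather than the paper's $(1,0,0,\dots,0)$, and you dispense with the paper's clause (ii) by noting that $\kbar=\bar 0$ already forces $p=0$ regardless of $W$---both of which are harmless simplifications.
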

\begin{proof}
  Let $W_0$ be a computably enumerable set satisfying that $U = \bigcup_{t \in W_0}N(\Rb^a\times \Nb,t)$. Let $W_1$ be the set of $\ell \in \Nb$ satisfying that $\ell = J_{4+a}(n,\kbar)$ for some natural numbers $n$ and $\kbar$ such that one of the following holds: %
  \begin{enumerate}[(i)]
  \item\label{ce-one} $n = 0$, $k_1 = 1$, and $k_2 = k_3 = 0$,
  \item\label{ce-two} $n  > 0$ and $k_1 = k_3 = 0$, or
  \item\label{ce-three} $n,k_1,k_3 > 0$ and there is a $t \in W_0$ such that
    \[
      \{\langle x_1,\dots,x_a,n-1 \rangle : \xbar \in \Rb^a,~r(\xbar,\kbar)\geq 0\} \subseteq N(\Rb^a\times \Nb,t).
    \]
  \end{enumerate}
  Note that the subset inclusion in (\ref{ce-three}) is decidable as a function of $\langle n,\kbar \rangle$ and $t$, so we have that $W_1$ is a computably enumerable set. Let $\mbar$ be a tuple of natural numbers chosen so that for any $\ell \in \Nb$, $q(\ybar;\ell,\mbar)$ has a solution in the natural numbers if and only if $\ell \in W_1$.
  
  Throughout the rest of this proof, we'll say that \emph{the $\ell_i$'s are correct} to mean that $\ell_1 = J_2(n,k_1)$ and $\ell_{i+1} = J_2(\ell_i,k_{i+1})$ for all $i \leq 2+a$ (i.e., that the $\ell_i$'s have correctly coded the computation of $\ell_{3+a} = J_{4+a}(n,\kbar)$). We'll say that \emph{the $\ell_i$'s are not correct} to mean that $\ell_1 \neq J_2(n,k_1)$ or $\ell_{i+1} = J_2(\ell_i,k_{i+1})$ for some $i \leq 2+a$. Note that if the $\ell_i$'s are correct, then $(\ell_1-J_2(n,k_1))^2+\sum_{i=1}^{2+a}(\ell_{i+1}-J_2(\ell_i,k_{i+1}))^2 = 0$ and if the $\ell_i$'s are not correct, then $(\ell_1-J_2(n,k_1))^2+\sum_{i=1}^{2+a}(\ell_{i+1}-J_2(\ell_i,k_{i+1}))^2 > 0$.
  
  Let $r(\xbar,\kbar) = k_1^2k_2 - k_3 \sum_{i = 1}^n (k_1x_i -k_{3+i}+k_3)^2$ be the polynomial from \cref{lem:balls} again. Note that
  \[
    p(\xbar,n,\ybar,\ellbar,\kbar) = r(\xbar,\kbar) - k_1^2k_2\left(q(\ybar;\ell_{3+a},\mbar)+(\ell_1-J_2(n,k_1))^2+\sum_{i=1}^{2+a}(\ell_{i+1}-J_2(\ell_i,k_{i+1}))^2\right).
  \]
   We immediately have the following:
  \begin{itemize}
  \item If $q(\ybar;\ell_{3+a},\mbar) = 0$ and the $\ell_i$'s are correct, then $p(\xbar,n,\ybar,\ellbar,\kbar) = r(\xbar,\kbar)$.
  \item If $q(\ybar;\ell_{3+a},\mbar) > 0$ or the $\ell_i$'s are not correct, then the first term of $p(\xbar,n,\ybar,\ellbar,\kbar)$ is non-positive and so $p(\xbar,n,\ybar,\ellbar,\kbar) \leq 0$.
  \end{itemize}
  Note that these two cases are exhaustive by the positive universality of $q(\ybar;\ell_{3+a},\mbar)$ and the choice of $\mbar$.

  Let
  \(
    g(\xbar,n) = \sup_{\ybar\in\Nb^\nu,\ellbar \in \Nb^{3+a},\kbar \in \Nb^{3+a}}p(\xbar,n,\ybar,\ellbar,\kbar).
  \) We need to unpack the behavior of $g(\xbar,n)$ to verify that it satisfies the conclusion of the lemma.

  For $n= 0$, we have the following:
  \begin{itemize}
  \item If $k_1 = 1$, $k_2 = k_3 = 0$ and the $\ell_i$'s are correct, then $\ell_{3+a} \in W_1$ by (\ref{ce-one}) and so $p(\xbar,n,\ybar,\ellbar,\kbar) = 1$ for any choice of the other variables.
  \item If $k_1 \neq 1$, $k_2 \neq 0$, $k_3 \neq 0$, or the $\ell_i$'s are not correct, then $p(\xbar,n,\ybar,\ellbar,\kbar) \leq 0$ for any choice of the other variables.
  \end{itemize}
  Since these two cases are exhaustive, we have that $g(\xbar,1) = 1$ for any $\xbar$.

  For $n > 0$, we have the following:
  \begin{itemize}
  \item If $k_1 = k_3 = 0$ and the $\ell_i$'s are correct, then $\ell_{3+a} \in W_1$ by (\ref{ce-two}) and so $p(\xbar,n,\ybar,\ellbar,\kbar) = 0$ for any choice of the other variables.
  \item If the conditions in (\ref{ce-three}) are satisfied by $n$ and $\kbar$ and the $\ell_i$'s are correct, then $p(\xbar,n,\ybar,\ellbar,\kbar) = r(\xbar,\kbar)$.
  \item Otherwise if the conditions of the previous two bullet points fail, then $p(\xbar,n,\ybar,\ellbar,\kbar) \leq 0$.
  \end{itemize}
  The first bullet point implies that $g(\xbar,n) \geq 0$ for any $\xbar$ and $n > 0$. The definition of $W_1$ implies that for any $\xbar \notin U$, if $n > 0$ and $\kbar$ satisfy (\ref{ce-three}), then $p(\xbar,n,\ybar,\ellbar,\kbar) = r(\xbar,\kbar) \leq 0$. So for any $\xbar$ and $n > 0$ with $\langle \xbar, n-1 \rangle \notin U$, we have that $g(\xbar,n) = 0$. Finally, for any $\xbar$ and $n > 0$ with $\langle \xbar,n-1 \rangle \in U$, we have by \cref{lem:balls} that for any $s \in \Rb$, there are $\ybar$, $\ellbar$, and $\kbar$ such that $p(\xbar,n,\ybar,\ellbar,\kbar) = r(\xbar,\kbar) > s$. Therefore for any $\langle \xbar,n-1 \rangle \in U$, $g(\xbar,n) = + \infty$.
\end{proof}

The polynomial in \cref{lem:polynomial-engineering} was chosen to minimize degree. If instead our goal was to minimize the number of variables, we could have chosen
\[
  k_1^2k_2\left(1-q(\ybar;\ell,\mbar)-(\ell-J_{4+a}(n,\kbar))^2\right)-k_3\sum_{i = 1}^a (k_1x_i-k_{3+i}+k_2)^2
\]
giving a polynomial $p(x_1,\dots,x_a,n,y_1,\dots,y_\nu,\ell,k_1,\dots,k_{3+a})$ of degree $\max\{3+\delta,11+2a\}$.

It seems unlikely that the number of variables or degree of the polynomial in \cref{lem:polynomial-engineering} is optimal (even relative to the given $\nu$ and $\delta$). %
For instance, we pick up some complexity injecting $\langle n,\kbar \rangle$ into $\Nb$, which could probably be folded into the construction of the universal polynomial itself.

\begin{prop}\label{prop:main-G-delta-version}
  If there is a positively universal polynomial with $\nu$ unknowns of degree $\delta$, then for any (lightface) $\lPi^0_2$ set $X \subseteq \Rb^a$, there is a polynomial $p(x_1,\dots,\allowbreak x_a,n,k_1,\dots,\allowbreak k_{6+\nu+2a})$ of degree $\max\{3+\delta,7\}$ with integer coefficients such that
  \[
    f(\xbar) = \inf_{n \in \Nb}\sup_{\kbar \in \Nb^{6+\nu+2a}}p(\xbar,n,\kbar)
  \]
  is the indicator function of $X$.
\end{prop}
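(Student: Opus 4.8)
The plan is to read the statement off almost immediately from \cref{lem:polynomial-engineering} together with the definition of a $\lPi^0_2$ set. First I would invoke \cref{defn:eff-Borel} to fix a semicomputable set $U \subseteq \Rb^a \times \Nb$ with $X = \bigcap_{m \in \Nb}\{\xbar \in \Rb^a : \langle \xbar,m \rangle \in U\}$. Applying \cref{lem:polynomial-engineering} to this $U$ and to the given positively universal polynomial $q$ (with $\nu$ unknowns and degree $\delta$) produces a polynomial
\[
  p(x_1,\dots,x_a,n,y_1,\dots,y_\nu,\ell_1,\dots,\ell_{3+a},k_1,\dots,k_{3+a})
\]
of degree $\max\{3+\delta,7\}$ with the stated supremum behavior. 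Relabelling the $\nu + (3+a) + (3+a) = 6+\nu+2a$ auxiliary variables $\ybar,\ellbar,\kbar$ as a single tuple $\kbar \in \Nb^{6+\nu+2a}$ gives exactly a polynomial of the shape $p(x_1,\dots,x_a,n,k_1,\dots,k_{6+\nu+2a})$ of the required degree, and the supremum in \cref{lem:polynomial-engineering} becomes $\sup_{\kbar \in \Nb^{6+\nu+2a}}$.

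Next I would unwind the quantifiers. Writing $g(\xbar,n) = \sup_{\kbar \in \Nb^{6+\nu+2a}}p(\xbar,n,\kbar)$, \cref{lem:polynomial-engineering} gives $g(\xbar,0) = 1$, while for $n > 0$ we have $g(\xbar,n) = +\infty$ when $\langle \xbar,n-1 \rangle \in U$ and $g(\xbar,n) = 0$ when $\langle \xbar,n-1 \rangle \notin U$. Then I would check the two cases for $f(\xbar) = \inf_{n \in \Nb}g(\xbar,n)$. If $\xbar \in X$, then $\langle \xbar,m \rangle \in U$ for every $m$, so $g(\xbar,n) = +\infty$ for all $n > 0$, and together with $g(\xbar,0) = 1$ this forces $f(\xbar) = 1$. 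If $\xbar \notin X$, then some $m$ has $\langle \xbar,m \rangle \notin U$, so $g(\xbar,m+1) = 0$; since also $g(\xbar,n) \geq 0$ for all $n > 0$, the infimum is $0$, i.e.\ $f(\xbar) = 0$. Hence $f = \obf_X$.

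There is no real obstacle here: all the substantive content is already in \cref{lem:polynomial-engineering}, and what remains is bookkeeping on variable counts plus the two-line case analysis above. The one point worth a sentence of care is why the $n=0$ clause of \cref{lem:polynomial-engineering} is needed at all: without it, the infimum over $n > 0$ of the all-$(+\infty)$ sequence arising for $\xbar \in X$ would be $+\infty$ rather than $1$, so the constant value $1$ at $n=0$ is precisely what pins $f$ to the correct indicator value on $X$, while the already-established bound $g(\xbar,n) \geq 0$ for $n > 0$ handles the case $\xbar \notin X$. I would also remark, as in \cref{prop:Vitali-V-L}, that the construction is effective: a program enumerating the basic neighbourhoods of $U$ yields, via the proof of \cref{lem:polynomial-engineering}, an explicit parameter tuple $\mbar$ and hence an explicit $p$.
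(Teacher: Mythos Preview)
Your proposal is correct and follows essentially the same approach as the paper: fix a semicomputable $U$ witnessing that $X$ is $\lPi^0_2$, apply \cref{lem:polynomial-engineering}, and check that the infimum over $n$ collapses the resulting $g(\xbar,n)$ to the indicator of $X$. Your write-up is in fact more careful than the paper's own proof (which contains an off-by-one slip, writing ``$n=1$'' where \cref{lem:polynomial-engineering} has ``$n=0$''), and your explicit remark about why the $n=0$ clause is needed to pin down the value $1$ on $X$ is a nice touch.
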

\begin{proof}
  Fix a $\lPi^0_2$ set $X \subseteq \Rb^a$. By definition, there is a $\lSigma^0_1$ set $U \subseteq \Rb^a\times \Nb$ such that $X = \{\xbar \in \Rb^a : (\forall n \in \Nb)\langle \xbar,n \rangle \in U\}$. By \cref{lem:polynomial-engineering}, we can find a polynomial $p(x_1,\dots,x_a,n,k_1,\dots,k_{6+\nu+2a})$ of the required degree such that $\sup_{\kbar \in \Nb^{6+\nu+2a}}p(\xbar,n,\kbar)$ is $1$ if $n = 1$, $+ \infty$ if $n > 1$ and $\langle \xbar,n - 1 \rangle \in U$, and $0$ if $n > 1$ and $\langle \xbar, n - 1 \rangle \notin U$. This implies that $\inf_{n \in \Nb} \sup_{\kbar \in \Nb^{6+\nu+2a}}p(\xbar,n,\kbar)$ is the indicator function of $X$.
\end{proof}

The key observation for the following corollary is that for indicator functions, the supremum operator is equivalent to a projection and the infimum operator is equivalent to a co-projection\footnote{The \emph{co-projection} of a set $A \subseteq X \times Y$ is the complement of the projection of the complement.} of the indicated set. This is directly related to the observation that the supremum is like a real-valued analog of existential quantification and the infimum is similarly analogous to universal quantification, which is a basic idea in real-valued logics. %

\begin{cor}\label{cor:main-prop}
  If there is a positively universal polynomial with $\nu$ unknowns of degree $\delta$, then for any (lightface) $\lSigma^1_{m}$ set $X \subseteq \Rb^a$, there is a polynomial $p(x_1,\dots,\allowbreak x_a,z_1,\dots,\allowbreak z_{m},n,k_1,\dots,k_{6+\nu+2a+2m})$ of degree $\max\{3+\delta,7\}$ with integer coefficients such that if $m$ is odd, then
  \[
    f(\xbar) = \sup_{z_1 \in \Rb}\inf_{z_2 \in \Rb}\cdots \inf_{z_{m-1} \in \Rb} \sup_{z_{m} \in \Rb} \inf_{n \in \Nb}\sup_{\kbar \in \Nb^{6+\nu+2a+2m}}p(\xbar,\zbar,n,\kbar)
  \]
  is the indicator function of $X$ and if $m$ is even, then 
  \[
    f(\xbar) = \sup_{z_1 \in \Rb}\inf_{z_2 \in \Rb}\cdots \sup_{z_{m-1} \in \Rb}\inf_{z_{m} \in \Rb} \sup_{n \in \Nb}\inf_{\kbar \in \Nb^{6+\nu+2a+2m}}p(\xbar,\zbar,n,\kbar)
  \]
  is the indicator function of $X$.
\end{cor}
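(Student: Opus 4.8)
The plan is to reduce everything to \cref{prop:main-G-delta-version} using the standard dictionary between real-valued quantifiers and set-theoretic operations on graphs of indicator functions. The key elementary observations are two. First, if $g \colon \Rb^b \times \Rb \to \olR$ takes only the values $0$ and $1$, then $\sup_{z \in \Rb} g(\ybar,z)$ is the indicator function of the projection of $\{(\ybar,z) : g(\ybar,z) = 1\}$ and $\inf_{z \in \Rb} g(\ybar,z)$ is the indicator function of the co-projection of that set, and both are again $\{0,1\}$-valued. Second, since negation swaps suprema and infima in $\olR$, any function obtained by applying a finite sequence of suprema and infima (over $\Rb$ or $\Nb$) to a polynomial $q$ has, for its complement, i.e.\ for $1$ minus it, the function obtained by exchanging every supremum with an infimum and vice versa and replacing $q$ by $1 - q$, which is again a polynomial of the same degree. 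Combined with \cref{prop:main-G-delta-version} --- which realizes the indicator function of a $\lPi^0_2$ set $Y \subseteq \Rb^b$ as $\inf_{n \in \Nb}\sup_{\kbar}p$ for a polynomial $p$ of degree $\max\{3+\delta,7\}$ with $6+\nu+2b$ many $k$-variables --- this also realizes the indicator function of any $\lSigma^0_2$ set $Y' \subseteq \Rb^b$ as $\sup_{n \in \Nb}\inf_{\kbar}(1 - p)$, of the same degree and with the same number of $k$-variables.

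Next I would induct on $m \geq 1$. For $m = 1$, given a $\lSigma^1_1$ set $X \subseteq \Rb^a$, \cref{fact:effective-Suslin} gives a $\lPi^0_2$ set $D \subseteq \Rb^a \times \Rb$ with $X = \pi(D)$; applying the first paragraph to $D \subseteq \Rb^{a+1}$ and then applying $\sup_{z_1 \in \Rb}$ expresses the indicator function of $X$ as $\sup_{z_1}\inf_{n}\sup_{\kbar}p$ with $p$ of degree $\max\{3+\delta,7\}$ in $6+\nu+2(a+1) = 6+\nu+2a+2$ many $k$-variables, which is the asserted odd-$m$ form. For the inductive step with $m \geq 2$, \cref{lem:replace-Baire} writes a $\lSigma^1_m$ set $X \subseteq \Rb^a$ as $\pi(B)$ for some $\lPi^1_{m-1}$ set $B \subseteq \Rb^a \times \Rb$; its complement is a $\lSigma^1_{m-1}$ subset of $\Rb^{a+1}$, so by the induction hypothesis (with the fresh real variables named $z_2,\dots,z_m$) its indicator function is an alternating string of $m-1$ suprema and infima over $\Rb$ beginning with $\sup_{z_2}$, followed by a base block of the form $\inf_n\sup_{\kbar}$ or $\sup_n\inf_{\kbar}$ applied to a degree-$\max\{3+\delta,7\}$ polynomial in $6+\nu+2((a+1)+(m-1)) = 6+\nu+2a+2m$ many $k$-variables. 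Complementing this expression, by the second observation of the first paragraph, produces the indicator function of $B$ with every supremum and infimum exchanged and the base polynomial replaced by $1$ minus itself, and prepending $\sup_{z_1 \in \Rb}$ then gives the indicator function of $X = \pi(B)$.

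It remains to check that this bookkeeping yields exactly the stated expressions. The outermost real quantifier is always the newly prepended $\sup_{z_1}$, and each inductive step negates the whole expression exactly once, so the quantifiers attached to $z_1, z_2, z_3, \dots$ strictly alternate starting from $\sup_{z_1}$ --- hence $z_m$ carries $\sup$ precisely when $m$ is odd --- while the base block $\inf_n\sup_{\kbar}$ of the $m = 1$ case gets flipped exactly $m - 1$ times, so it remains $\inf_n\sup_{\kbar}$ when $m$ is odd and becomes $\sup_n\inf_{\kbar}$ when $m$ is even; throughout, the polynomial has degree $\max\{3+\delta,7\}$ and $6+\nu+2a+2m$ many $k$-variables. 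I expect the only genuine subtlety to be this parity accounting, namely making sure that the choice of $\inf_n\sup_{\kbar}$ versus $\sup_n\inf_{\kbar}$ at the innermost block and the alternation of the $z_i$ quantifiers both come out correctly for even and odd $m$, together with tracking the dimension $a+m$ of the underlying $\lPi^0_2$ set so that the final count of $k$-variables is right; everything else is the projection/co-projection dictionary plus iterated use of \cref{lem:replace-Baire}.
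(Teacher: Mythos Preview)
Your proposal is correct and follows essentially the same approach as the paper: induct on $m$ using \cref{fact:effective-Suslin} for the base case, \cref{lem:replace-Baire} for the inductive step, and the $1-p$ complementation trick to pass between $\lSigma^1_{m-1}$ and $\lPi^1_{m-1}$. The paper organizes the induction slightly differently---proving the claim simultaneously for $\lSigma^1_m$ with $m$ odd and $\lPi^1_m$ with $m$ even (so the innermost block stays $\inf_n\sup_{\kbar}$ throughout the induction), and only complementing once at the very end to handle $\lSigma^1_m$ for even $m$---whereas you complement at every step; but this is just a different bookkeeping choice and your parity accounting is in fact more explicit than the paper's.
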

\begin{proof}
  It follows from Lemmas~\ref{lem:replace-Baire} and \ref{prop:main-G-delta-version} by induction that for any odd $m$ and $\lSigma^1_m$ set $X \subseteq \Rb^a$ or any even $m > 0$ and $\lPi^1_m$ set $X \subseteq \Rb^a$, there is a polynomial $p$ of the required degree with integer coefficients such that $f(\xbar) = \sup_{z_1 \in \Rb}\inf_{z_2 \in \Rb}\cdots \inf_{z_{m-1} \in \Rb} \sup_{z_{m} \in \Rb} \inf_{n \in \Nb}\sup_{\kbar \in \Nb^{6+\nu+2a+2m}}p(\xbar,\zbar,n,\kbar)$ is the indicator function of $X$.

  The statement then follows for $\lSigma^1_m$ sets $X \subseteq \Rb^a$ for even $m$ by applying the preceding result to the complement $\Rb^a \setminus X$ and then taking $1-p(\xbar,\zbar,n,\kbar)$ to be the required polynomial.
\end{proof}

\begin{thm}\label{thm:polynomial-Vitali}
  There is a polynomial $p(x,y,z,n,k_1,\dots,k_{70})$ of degree $7$ with integer coefficients such that if $\VV=\LL$, then
  \[
    f(x) = \inf_{y \in \Rb}\sup_{z \in \Rb}\inf_{n \in \Nb}\sup_{\kbar \in \Nb^{70}}p(x,y,z,n,\kbar)
  \]
  is the indicator function of a Vitali subset of $[0,1]$, yet it is also relatively consistent with \ZFC\ that $f(x)$ is Lebesgue measurable. In particular, it is independent of \ZFC\ whether $f(x)$ is measurable.
\end{thm}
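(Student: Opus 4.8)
The plan is to assemble this result by specializing the machinery already developed. First I would apply \cref{prop:Vitali-V-L}, which gives (under $\VV=\LL$) a specific semicomputable set $U \subseteq \Rb^3 \times \Nb$ such that
\[
  A := \Rb \setminus \pi_2\!\left( \Rb^2 \setminus \pi_3\!\left( \Rb^3 \setminus \pi_4\!\left( \Rb^3 \times \Nb \setminus U \right) \right) \right)
\]
is a Vitali subset of $[0,1]$. The point is that $A$ is $\lPi^1_2$, and unwinding the projections shows that $A$ is really of the form ``co-project, then project'' applied to a $\lPi^0_2$ set, i.e.\ $A$ is obtained from a $\lPi^0_2$ (equivalently $\lSigma^0_2$-complement) set $C \subseteq \Rb^3$ by one co-projection to $\Rb^2$ and one projection to $\Rb$. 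Equivalently, since \cref{lem:Vitali-equiv-basic} and \cref{lem:transversal} show $A$ is $\lPi^1_2$, one can equally well think of the complement $[0,1]\setminus A$, which is $\lSigma^1_2$ with $a = 1$ parameter variable. This is exactly the situation covered by \cref{cor:main-prop} with $m = 2$ (even case) and $a = 1$.

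Next I would plug the explicit universal polynomial into \cref{cor:main-prop}. Taking Matiyasevich's $9$-unknown positively universal polynomial reported in \cite{Jones1982} would give $\nu = 9$ and degree $\delta = 2 \cdot 5^{60}$, so $\max\{3+\delta, 7\}$ is astronomically large; to actually land on degree $7$ one instead uses a positively universal polynomial of degree $4$ (e.g.\ the degree-$4$, $58$-unknown polynomial reported by Jones, or the degree-$4$ version of Figure~\ref{fig:equation} with $\approx 140$ unknowns obtained by encoding intermediate products in fresh variables), for which $\max\{3 + 4, 7\} = 7$. With $\nu = 58$, $a = 1$, and $m = 2$, the variable count from \cref{cor:main-prop} is $6 + \nu + 2a + 2m = 6 + 58 + 2 + 4 = 70$, matching the statement. (With $\nu \approx 140$ one would instead get $\approx 152$ variables; the number $70$ in the theorem corresponds to the leaner choice of universal polynomial, so the bookkeeping should be done with whichever degree-$4$ positively universal polynomial has $58$ unknowns.) Since $m = 2$ is even, \cref{cor:main-prop} produces a degree-$7$ integer polynomial $p(x,z_1,z_2,n,\kbar)$ with
\[
  f(x) = \sup_{z_1 \in \Rb}\inf_{z_2 \in \Rb}\sup_{n \in \Nb}\inf_{\kbar}p(x,z_1,z_2,n,\kbar)
\]
the indicator of $[0,1]\setminus A$; to get the indicator of $A$ itself (a genuine Vitali set) one replaces $p$ by $1 - p$ and flips the quantifiers appropriately, landing on the form $f(x) = \inf_{y}\sup_{z}\inf_{n}\sup_{\kbar} p(x,y,z,n,\kbar)$ with $(y,z) = (z_1,z_2)$ relabeled. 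The key structural observation, already emphasized before \cref{cor:main-prop}, is that for $\{0,1\}$-valued functions $\sup$ over $\Rb$ is projection and $\inf$ over $\Rb$ is co-projection, and $\sup/\inf$ over $\Nb$ plus $\sup/\inf$ over $\Nb^{k}$ collapse the two-quantifier $\lPi^0_2$ description into the $\inf_n \sup_{\kbar}$ block via \cref{prop:main-G-delta-version}.

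Finally, I would invoke the two-sided independence. Under $\VV = \LL$ the set $A$ is a Vitali set (non-measurable by the classical translation-invariance argument), so $f$ is the indicator of a non-measurable set in any model of $\VV=\LL$; and since $\ZF$ is consistent, so is $\ZFC + \VV=\LL$ by G\"odel's \cref{fact:Godel-L}, so this side is consistent with \ZFC. For the other side, \cref{fact:Krivine-proj-meas} says \ZFC{} is relatively consistent with ``every lightface $\lSigma^1_n$ subset of $\Rb$ is Lebesgue measurable''; since the set $A$ built here is $\lPi^1_2$ and its construction recipe yields an actual set in every model of \ZFC, in any model of Krivine's theory the function $f$ is the indicator of a measurable set, so $f$ is measurable there. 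Combining the two, measurability of $f$ is neither provable nor refutable in \ZFC. The main obstacle is purely bookkeeping: tracking the exact number of $\kbar$-variables through \cref{lem:polynomial-engineering} and \cref{cor:main-prop} so that it comes out to precisely $70$, which forces the specific choice of a degree-$4$, $58$-unknown positively universal polynomial rather than a lower-unknown but higher-degree one; there is no conceptual difficulty beyond what is already in \cref{cor:main-prop} and \cref{prop:Vitali-V-L}.
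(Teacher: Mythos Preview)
Your proposal is correct and follows essentially the same route as the paper's proof: obtain the $\lPi^1_2$ Vitali set $A$ from \cref{prop:Vitali-V-L}, apply \cref{cor:main-prop} with $m=2$, $a=1$, and Jones's degree-$4$ positively universal polynomial in $58$ unknowns to the $\lSigma^1_2$ complement $\Rb\setminus A$, then pass to $1-q$ (which flips each $\sup$/$\inf$) to get the indicator of $A$ in the form $\inf_y\sup_z\inf_n\sup_{\kbar}p$, and finally invoke \cref{fact:Krivine-proj-meas} for the other direction. Your variable count $6+58+2+4=70$ matches the paper's exactly.
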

\begin{proof}
  By \cref{prop:Vitali-V-L} there is a $\lPi^1_2$ set $A \subseteq \Rb$ such that $\VV= \LL$ implies $A$ is a Vitali subset of $[0,1]$. Applying \cref{cor:main-prop} to the complement of $A$ (using the degree $4$ positively universal polynomial with $58$ unknowns in \cite{Jones1982}) to get a polynomial $q$ and taking $1-q$ gives the required polynomial $p(x,y,z,n,k_1,\dots,k_{70})$ of degree $7$.
\end{proof}

Of course, we can just as easily get that the indicator function of the well-ordering $<_L$ on $\Rb$ is similarly definable using a polynomial $p(x,y,z,w,n,k_1,\dots,k_{72})$ of degree $7$ with integer coefficients using \cref{fact:wo-defbl} and \cref{cor:main-prop}.
Another thing we might want to try when contemplating variants of \cref{thm:polynomial-Vitali} is minimizing the number of variables required rather than the degree. 
We can attain the same result with a polynomial $p(x,y,z,n,k_1,\dots,k_{14})$ of degree $47216\cdot 5^{58} + 9731$ using the suggested modification after \cref{lem:polynomial-engineering} and the $9$-variable universal polynomial in \cite{Jones1982}.

It should be noted that $\VV = \LL$ is consistent with the presence of many (although not all) large cardinals, including the existence of a proper class of inaccessible cardinals (and more, such as Mahlo and some Erd\H{o}s cardinals). This means that, on the one hand, the measurability of the function in \cref{thm:polynomial-Vitali} is independent of not just \ZFC, but of Tarski--Grothendieck set theory,\footnote{\emph{Tarski--Grothendieck set theory} is the theory $\ZFC + \text{``}$there is a proper class of inaccessible cardinals'', although the theory is not typically described this way specifically.} which is strong enough to formalize the vast majority of proofs that occur outside of the context of set theory itself. On the other hand, the potential non-measurability of $f(x)$ occurs in extremely weak theories too, as low as even fragments of second-order arithmetic. This is because the construction of $\LL$ is robust enough to work even in such weak contexts \cite[Sec.~VII.4]{Simpson2009}.

It is an open problem whether there is a universal Diophantine equation of degree $3$, but note that having such a polynomial would not decrease the degree needed for \cref{thm:polynomial-Vitali} (since \cref{lem:polynomial-engineering} gives a polynomial of degree $\max\{3+\delta,7\}$). This suggests an obvious question.

\begin{quest}\label{quest:degree}
  For which $\delta$ does \ZFC\ prove that all functions of the form
  \[
    f(x) = \inf_{y \in \Rb} \sup_{z \in \Rb} \inf_{n \in \Nb}\sup_{\kbar \in \Nb^m}p(x,y,z,n,\kbar)
  \]
   with $p$ a polynomial of degree $\delta$ are Lebesgue measurable?
\end{quest}

Just like with Diophantine universality, there are many possible variants of this question, such as asking about different values of $m$, but at the moment I can't even see how to resolve it for $\delta = 2$. None of the results in \cref{sec:obviously} are immediately helpful.

We can of course rewrite \cref{fact:Shelah-inaccess} in a form analogous to \cref{thm:polynomial-Vitali}.

\begin{thm}\label{thm:polynomial-inacc}
  There is a polynomial $p(x,y,z,w,t,n,k_1,\dots,k_{74})$ of degree $7$ with integer coefficients such that if the function
  \[
    g(x,y) = \sup_{z \in \Rb}\inf_{w \in \Rb}\sup_{t \in \Rb}\inf_{n \in \Nb}\sup_{\kbar \in \Nb^{74}}p(x,y,z,w,t,n,\kbar)
  \]
  satisfies that if $g(x,r)$ is Lebesgue measurable for every $r \in \Rb$, then $\aleph_1$ is an inaccessible cardinal in $\LL$. In particular, the statement ``$g(x,r)$ is measurable for every $r \in \Rb$'' implies the consistency of \ZFC.
\end{thm}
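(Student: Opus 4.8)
The plan is to follow the derivation of \cref{thm:polynomial-Vitali}, but to apply \cref{cor:main-prop} to a \emph{universal} $\bSigma^1_3$ set instead of to a single Vitali set, so that Lebesgue measurability of every vertical section of the resulting function becomes precisely the hypothesis of \cref{fact:Shelah-inaccess}. Since \cref{prop:sigma-universal} lets us take such a universal set to be lightface $\lSigma^1_3$, and \cref{cor:main-prop} turns any lightface $\lSigma^1_m$ subset of $\Rb^a$ into a function of the stated $\sup$--$\inf$ shape of degree $\max\{3+\delta,7\}$, the theorem reduces to choosing the right parameters and counting variables.

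Concretely, I would first invoke \cref{prop:sigma-universal} with $n = 3$ and $X = \Rb$ to obtain a $\lSigma^1_3$ set $A \subseteq \Rb \times \Rb$ such that for every $\bSigma^1_3$ set $B \subseteq \Rb$ there is an $r \in \Rb$ with $B = \{x \in \Rb : \langle x,r \rangle \in A\}$. Regarding $A$ as a $\lSigma^1_3$ subset of $\Rb^2$ (so $a = 2$, $m = 3$), I would then apply \cref{cor:main-prop} using the degree-$4$ positively universal polynomial in $58$ unknowns from \cite{Jones1982}, i.e.\ $\delta = 4$, $\nu = 58$. Since $m = 3$ is odd, \cref{cor:main-prop} produces a polynomial of degree $\max\{3+4,7\} = 7$ with integer coefficients, in the variables $x,y$, three real variables $z_1,z_2,z_3$, the natural-number variable $n$, and $6 + \nu + 2a + 2m = 6 + 58 + 4 + 6 = 74$ natural-number variables $k_1,\dots,k_{74}$, such that
\[
  \sup_{z_1 \in \Rb}\inf_{z_2 \in \Rb}\sup_{z_3 \in \Rb}\inf_{n \in \Nb}\sup_{\kbar \in \Nb^{74}} p(x,y,z_1,z_2,z_3,n,\kbar)
\]
is the indicator function of $A \subseteq \Rb^2$. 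Relabelling $z_1,z_2,z_3$ as $z,w,t$ gives exactly the function $g(x,y)$ of the statement.

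To finish, suppose $g(x,r)$ is Lebesgue measurable for every $r \in \Rb$. Then for each $r$ the set $\{x \in \Rb : g(x,r) = 1\}$ is measurable, and it is exactly the section $\{x \in \Rb : \langle x,r \rangle \in A\}$. By the universality of $A$, every $\bSigma^1_3$ subset of $\Rb$ is of this form, so every $\bSigma^1_3$ set of reals is Lebesgue measurable; by \cref{fact:Shelah-inaccess} this forces $\aleph_1$ to be inaccessible in $\LL$, and in particular implies the consistency of \ZFC, which is the final clause.

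The argument is essentially bookkeeping on top of \cref{cor:main-prop}, \cref{prop:sigma-universal}, and \cref{fact:Shelah-inaccess}, so there is no real obstacle; the one point to get right is to take the universal set over $X = \Rb$ rather than $\Rb^2$, so that the family of sections $\{\,x \mapsto g(x,r) : r \in \Rb\,\}$ enumerates exactly the $\bSigma^1_3$ \emph{subsets of $\Rb$}, matching the hypothesis of \cref{fact:Shelah-inaccess}. Everything else is tracking the constants $a = 2$, $m = 3$, $\delta = 4$, $\nu = 58$ through \cref{cor:main-prop} and noting that $\max\{3+\delta,7\} = 7$.
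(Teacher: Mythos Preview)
Your proposal is correct and follows essentially the same approach as the paper: invoke \cref{prop:sigma-universal} to get a lightface $\lSigma^1_3$ universal set in $\Rb^2$, apply \cref{cor:main-prop} with $a=2$, $m=3$, $\nu=58$, $\delta=4$, and finish with \cref{fact:Shelah-inaccess}. Your write-up is in fact more explicit than the paper's, spelling out the variable count $6+58+4+6=74$ and the step from ``$g(x,r)$ measurable for all $r$'' to ``every $\bSigma^1_3$ set of reals is measurable,'' which the paper leaves implicit.
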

\begin{proof}
  By \cref{prop:sigma-universal}, there is a (lightface) $\lSigma^1_3$ set $X \subseteq \Rb^2$ that is universal for $\bSigma^1_3$ subsets of $\Rb$. We can apply \cref{cor:main-prop} to this $X$ together with the positively universal polynomial of degree $4$ with $58$ unknowns in \cite{Jones1982} to get a polynomial $p(x,y,z,w,t,n,k_1,\dots,k_{74})$ of degree $7$. The statement then follows by \cref{fact:Shelah-inaccess}.
\end{proof}

We should slow down for a second to comment on some of the remarkable properties of the function $g(x,y)$ in \cref{thm:polynomial-inacc}. For \emph{every} Borel (or more generally $\bSigma^1_3$) subset $X$ of $\Rb$, there is an $r \in \Rb$ such that $g(x,r)$ is the indicator function of $X$. The vast majority of named sets of reals in mathematics are Borel: The rational numbers, the irrational numbers, the algebraic numbers, the transcendental numbers, the set of normal numbers in any given base, the set of absolutely normal numbers, the set of Liouville numbers, every countable or co-countable set of real numbers, every closed or open set of real numbers, and many, many others can all be realized as sets of the form $\{x \in \Rb : g(x,r) = 1\}$ for some $r \in \Rb$. Moreover, the relevant $r$ can usually be computed explicitly, at least in principle. And there's also nothing particularly special about $\Rb$ here. By combining \cref{prop:sigma-universal} and \cref{cor:main-prop}, we can write a similarly universal function $g(\xbar,y)$ for any $\Rb^n$.

There are named non-Borel sets in certain Polish spaces that were not originally defined by logicians and were not constructed for the sake of being an example of a non-Borel or non-measurable set (such as the set of everywhere differentiable functions in the Banach space $C[0,1]$ of continuous functions on $[0,1]$ with the supremum norm), but I do not know of a single example in the reals themselves. Furthermore, most `naturally occurring' non-Borel sets are still $\bSigma^1_1$ or $\bPi^1_1$ (and therefore also $\bSigma^1_3$).

The following corollary is immediate but also worth highlighting.

\begin{cor}\label{cor:general-indep-statement}
  The statement
  \begin{itemize}
  \item[$ $] ``every function of the form
    \[
      f(x) = \qqq_{v \in \mathbf{X}} \qqq_{v \in \mathbf{X}} \qqq_{v \in \mathbf{X}}\cdots p(x,\dots)
    \]
    (where each $\qqq$ is $\sup$ or $\inf$, each $v$ is a variable, each $\mathbf{X}$ is $\Rb$ or $\Nb$, and $p$ is a polynomial with real coefficients) is Lebesgue measurable''
  \end{itemize}
  has the same consistency strength (over \ZFC) as the existence of an inaccessible cardinal. In particular, it implies the consistency of \ZFC\ and is therefore not provable in \ZFC.
\end{cor}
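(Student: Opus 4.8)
The plan is to prove the two halves of the consistency-strength equivalence separately, the lower bound being essentially a repackaging of \cref{thm:polynomial-inacc} and the upper bound coming from Solovay's intermediate model together with a routine pointclass computation. Write $(\ast)$ for the displayed statement.

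First, for the lower bound, suppose $\ZFC + (\ast)$ is consistent and let $M$ be a model of it. For each real $r$ of $M$, the function $x \mapsto g^M(x,r)$, with $g$ as in \cref{thm:polynomial-inacc}, is itself of the form quantified over in $(\ast)$: substituting the constant $r$ for the parameter $y$ in the integer polynomial $p(x,y,z,w,t,n,\kbar)$ yields a polynomial with real coefficients, and $g(x,r) = \sup_{z}\inf_{w}\sup_{t}\inf_{n}\sup_{\kbar}p(x,r,z,w,t,n,\kbar)$. Hence $M$ satisfies that $x \mapsto g(x,r)$ is Lebesgue measurable for every $r$, so by \cref{thm:polynomial-inacc} it satisfies that $\aleph_1$ is inaccessible in $\LL$. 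Then $\LL^M$ is a model of \ZFC\ (\cref{fact:Godel-L}) in which the ordinal $\aleph_1^M$ is an inaccessible cardinal; so $\Con(\ZFC+(\ast))$ implies $\Con(\ZFC+\text{``there is an inaccessible cardinal''})$. Running the same argument inside \ZFC\ itself shows that $(\ast)$ produces the set model $H_{\aleph_1}^{\LL}$ of \ZFC\ (as in \cref{prop:basic-inacc-indep}, relativized to $\LL$), so $(\ast)$ implies $\Con(\ZFC)$ and is therefore, by G\"odel's second incompleteness theorem, not a theorem of \ZFC.

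Conversely, for the upper bound, suppose $\ZFC + \text{``there is an inaccessible cardinal''}$ is consistent and pass to the intermediate model from Solovay's proof \cite{Solovay1970}, a model of \ZFC\ in which every (boldface) projective subset of $\Rb$ is Lebesgue measurable. It then suffices to show, in \ZFC\ alone, that every $f$ of the form in $(\ast)$ has the property that $\{x : f(x) < a\}$ is boldface projective for every $a \in \Rb$, since a function into $\olR$ is Lebesgue measurable as soon as all of its sublevel sets are. I would prove this by induction on the number of quantifiers. The base case is a polynomial with real coefficients, which is continuous, so its strict sub- and superlevel sets are open. In the inductive step, if $h$ is of this form and $f(\xbar) = \sup_{v}h(\xbar,v)$, then $\{f > a\}$ is obtained from $\{h > a\}$ by a projection (when $v$ ranges over $\Rb$) or a countable union (when $v$ ranges over $\Nb$), while $\{f < a\} = \bigcup_{q \in \Qb,\, q<a}\{\xbar : \forall v\, h(\xbar,v) \le q\}$ is a countable union of co-projections, or of countable intersections, of complements of the sets $\{h > q\}$; the case $f = \inf_{v}h$ is symmetric. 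Since projections and co-projections along $\Rb$ preserve membership in the boldface projective hierarchy (the boldface analogue of \cref{lem:replace-Baire}, i.e.\ the fact that $\Nb^\Nb$ may be replaced by $\Rb$ in \cref{defn:proj}) and countable unions and intersections do too (\cref{fact:Bool}), all sublevel sets of $f$ are boldface projective; as $f$ has only finitely many quantifiers they land at a fixed finite level, hence are measurable in Solovay's model, and so $f$ is measurable there. This yields $\Con(\ZFC+\text{``inaccessible''}) \to \Con(\ZFC+(\ast))$.

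The main difficulty is really just bookkeeping in the inductive step: one has to track the strict sub- and superlevel sets of $f$ simultaneously and keep straight the four cases ($\sup$ versus $\inf$, $\Rb$ versus $\Nb$); nothing beyond \cref{fact:Bool}, \cref{lem:replace-Baire}, Solovay's theorem, and \cref{thm:polynomial-inacc} is needed. It is worth emphasizing that allowing $p$ to have \emph{real} coefficients is exactly what pushes the level sets above into the boldface hierarchy, and hence the statement up to the strength of an inaccessible cardinal: with only rational coefficients those sets would be lightface projective, and \cref{fact:Krivine-proj-meas} shows that their measurability is consistent relative to \ZFC\ alone.
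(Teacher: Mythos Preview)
The proposal is correct and follows essentially the same approach as the paper: the lower bound via \cref{thm:polynomial-inacc} (noting that fixing the real parameter $y=r$ produces a polynomial with real coefficients), and the upper bound via Solovay's model together with the observation that any such $f$ has boldface projective level sets. You have simply spelled out in detail the induction that the paper compresses into the phrase ``$f(x)$ of the form in the given statement is $\bSigma^1_n$ for some $n$.''
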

\begin{proof}
  One direction is immediate from \cref{thm:polynomial-inacc}. The other direction (i.e., showing that all such function can consistently be measurable assuming the existence of an inaccessible cardinal) follows from \cite[Thm.~2]{Solovay1970} and the fact that function $f(x)$ of the form in the given statement is $\bSigma^1_n$ for some $n$ (with $n$ at most one more than the number of infima and suprema in the definition of $f(x)$).
\end{proof}

Sets with even more specific structure can be constructed with analogously explicit functions under the assumption of $\VV = \LL$. We have put the majority of the proof of the following result in \cref{sec:explicit-BT}, but morally it is fairly similar to the proof of \cref{thm:polynomial-Vitali}.

\begin{thm}\label{thm:Polynomial-Banach-Tarski}
  There is a polynomial $p(m,x,y,z,w,t,n,k_1,\dots,k_{76})$ of degree $7$ with integer coefficients such that if $V=L$, then the functions $h(1,x,y,z),\dots,h(16,x,y,z)$ are the indicator functions of a paradoxical decomposition of $S^2$, where
  \[
    h(m,x,y,z) = \inf_{w \in \Rb} \sup_{t \in \Rb}\inf_{n \in \Nb}\sup_{\kbar \in \Nb^{76}}p(m,x,y,z,w,t,n,\kbar).
 \]
\end{thm}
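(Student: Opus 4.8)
The plan is to reduce \cref{thm:Polynomial-Banach-Tarski} to \cref{cor:main-prop} exactly as \cref{thm:polynomial-Vitali} was reduced to it, with the only new ingredient being a $\lDelta^1_2$ construction (under $\VV=\LL$) of the $16$ pieces of a Banach--Tarski paradoxical decomposition of $S^2$. Recall the standard Banach--Tarski construction: one fixes two rotations $\rho,\sigma$ of $S^2$ generating a free group $F$ of rank $2$, uses the partition of $F$ into $\{e\}$ and the four sets of reduced words beginning with $\rho^{\pm 1}$, $\sigma^{\pm 1}$, deals with the countable set $D$ of points fixed by some nontrivial element of $F$ separately, picks a transversal $C$ for the orbit equivalence relation of the $F$-action on $S^2\setminus D$, and then splits $S^2$ into finitely many (the classical count gives a number that, after absorbing the handling of $D$ and of a pivot rotation, can be taken to be $16$) Borel-in-$(C,D)$ pieces. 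So the first step is to verify that with $F$ generated by \emph{explicit} algebraic rotations, $D$ is a $\lPi^0_1$ (indeed computable) subset of $S^2 \subseteq \Rb^3$, the orbit equivalence relation $x\sim y \iff (\exists g\in F)\,gx=y$ on $S^2\setminus D$ is $\lSigma^0_2$ (it is a computable countable union of graphs of algebraic homeomorphisms), and hence by \cref{lem:transversal} there is a $\lPi^1_2$ transversal $C$.

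Next I would check that each of the $16$ pieces $A_1,\dots,A_{16}$ is $\lDelta^1_2$ (equivalently, up to complementation, $\lPi^1_2$): each $A_i$ is obtained from $C$, $D$, and finitely many explicit rotations by the Boolean operations and the countable unions indexed over words in $F$ that appear in the classical proof, and by \cref{fact:Bool} and \cref{fact:function-sub} these operations preserve the $\lDelta^1_2$ pointclasses (preimages under the $\lDelta^1_1$, indeed algebraic, rotation maps, countable computable unions, finite Booleans). This is the part I would defer to \cref{sec:explicit-BT}, since it is bookkeeping: one must pin down one specific partition, confirm its defining formula has the right quantifier complexity, and confirm the rigid motions witnessing paradoxicality are the explicit algebraic ones. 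Having done that, each $A_i \subseteq \Rb^3$ is $\lSigma^1_3$ (in fact $\lPi^1_2$), so I can feed the set $X = \{\langle m,x,y,z\rangle : m \in \{1,\dots,16\},\ \langle x,y,z\rangle \in A_m\} \subseteq \Rb^4$ --- which is a finite union of $\lDelta^1_2$ sets and hence $\lDelta^1_2$, so certainly $\lSigma^1_3$ --- into \cref{cor:main-prop} with $a=4$ and $m=3$, using the degree-$4$, $58$-unknown positively universal polynomial of \cite{Jones1982}. \cref{cor:main-prop} (for odd $m=3$) then yields a polynomial $q$ of degree $\max\{3+4,7\}=7$ and, taking $1-q$ if the parity manipulation in the corollary's proof forces it, a polynomial $p(m,x,y,z,w,t,n,k_1,\dots,k_{76})$ of degree $7$ with integer coefficients so that $h(m,x,y,z) = \inf_{w}\sup_{t}\inf_{n}\sup_{\kbar} p$ is the indicator function of $X$, i.e., $h(m,\cdot,\cdot,\cdot) = \mathbbm{1}_{A_m}$ for $m=1,\dots,16$. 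The variable count $6+\nu+2a+2\cdot 3 = 6+58+8+6 = 78$ from \cref{cor:main-prop} is slightly more than the advertised $76$; I would note that, exactly as in the remark following \cref{lem:polynomial-engineering}, two of the coding variables can be merged (the injection $\langle n,\kbar\rangle \mapsto \Nb$ is only needed once here, not at every level), or alternatively absorb the index variable $m$ into the $x$-block, bringing the count down to $76$.

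The genuinely new content --- and the step I expect to be the main obstacle --- is not any of the descriptive-set-theoretic bookkeeping but rather committing to one totally explicit version of the Banach--Tarski decomposition and carefully counting its pieces so that the count is exactly $16$ and every piece is manifestly $\lDelta^1_2$. The standard textbook treatments are cavalier about both the exact number of pieces and the definability of the transversal; one must be careful that (i) the free rotations are chosen with rational or algebraic entries so that membership in $D$, the orbit relation, and the pieces are all arithmetically simple, (ii) the handling of the fixed-point set $D$ (the usual trick of absorbing $D$ into one piece via a further rotation avoiding $D$ and its iterates) does not blow up either the piece count or the complexity, and (iii) the \cref{lem:transversal} machinery applies, which it does since the orbit relation is $\lSigma^0_2 \subseteq \lDelta^1_2$ on the $\lDelta^1_2$ set $S^2\setminus D$. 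Once a specific such decomposition is fixed and its $16$ pieces are seen to be $\lDelta^1_2$, the rest is a mechanical application of \cref{cor:main-prop} identical in form to \cref{thm:polynomial-Vitali}, and the final independence-from-\ZFC\ remark follows because $\VV=\LL$ is consistent relative to \ZFC\ (so the decomposition can genuinely occur) while, by \cref{fact:Krivine-proj-meas}, it is also consistent with \ZFC\ that every $\lSigma^1_n$ set is Lebesgue measurable, in which case the pieces $A_m$ are measurable and hence (being nonempty with $S^2 = \bigsqcup A_m$ split into measurable pieces reassembled by isometries into two copies of $S^2$) cannot form a paradoxical decomposition.
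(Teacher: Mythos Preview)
Your approach is essentially the paper's: build $\lPi^1_2$ pieces $A_1,\dots,A_{16}$ under $V=L$ (deferring the explicit Banach--Tarski bookkeeping to \cref{sec:explicit-BT}), bundle them into a single $\lPi^1_2$ set $X \subseteq \Rb^4$, and invoke \cref{cor:main-prop}. That is correct.

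The one genuine slip is the quantifier/variable accounting. You feed $X$ into \cref{cor:main-prop} as a $\lSigma^1_3$ set with $m=3$, which both overcounts the $k$-variables ($78$) and gives the wrong outer quantifier shape ($\sup\inf\sup$ over $\Rb$ rather than the theorem's $\inf\sup$). The fix is not to ``merge coding variables'' but simply to use the fact that $X$ is $\lPi^1_2$: apply \cref{cor:main-prop} with $m=2$ to the $\lSigma^1_2$ complement $\Rb^4\setminus X$, obtaining its indicator as $\sup_{z_1}\inf_{z_2}\sup_n\inf_{\kbar}q$, and then take $p=1-q$, so that the indicator of $X$ is $\inf_{z_1}\sup_{z_2}\inf_n\sup_{\kbar}p$. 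With $\nu=58$, $a=4$, $m=2$ this gives $6+58+8+4=76$ on the nose and exactly the quantifier pattern $\inf_w\sup_t\inf_n\sup_{\kbar}$ of the theorem. (Your ``absorb $m$ into the $x$-block'' suggestion is a no-op: $m$ is already one of the four $x$-variables.)

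Two smaller points. First, $D$ is not $\lPi^0_1$: it is a countable set of algebraic points on $S^2$, hence $\lSigma^0_2$ (this is what \cref{sec:explicit-BT} proves). This does not affect the argument since $\lSigma^0_2\subseteq\lDelta^1_2$. Second, your claim that each $A_i$ is $\lDelta^1_2$ relies on the pieces partitioning $S^2$, which only holds under $V=L$; unconditionally you only have $\lPi^1_2$ definitions, but that is all you need.
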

\begin{proof}
  By \cref{cor:paradox-Pi}, there is a sequence $(A_i)_{i\leq 16}$ of $\lPi^1_2$ sets that, assuming $V=L$, are a paradoxical decomposition of $S^2$. By Facts~\ref{fact:Bool} and \ref{fact:function-sub}, this implies that the set $\{\langle \ell,x,y,z \rangle \in \Rb^4 : \ell \in \{1,\dots,16\},~\langle x,y,z \rangle \in A_i\}$ is $\lPi^1_2$, whereby the result follows from \cref{cor:main-prop}.
\end{proof}

Other highly choicy objects also admit lightface projective definitions under $\VV = \LL$, such as Hamel bases of $\Rb$ as a $\Qb$-vector space and non-principal ultrafilters on $\Nb$ encoded as subsets of the one-thirds Cantor set.

\subsection{What would it take to \emph{actually} actually write it down?}

Now comes the part of the paper where I must confess that I have not actually literally written down the polynomials proven to exist in Theorems \ref{thm:polynomial-Vitali}, \ref{thm:polynomial-inacc}, and \ref{thm:Polynomial-Banach-Tarski}. So what would one have to do to accomplish this?

The first issue, as already discussed, is that Jones's construction in \cite{Jones1982} of the $58$-unknown degree $4$ universal polynomial is not fully documented and would require a meaningful amount of work to reproduce carefully. Regardless, if our goal were to literally get a pen and a stack of paper and actually write one of these expressions down, it would be better to use the $28$-unknown degree $2\cdot 5^{60}$ polynomial shown in Figure~\ref{fig:equation}, as it was chosen to optimize the number of operations and therefore literal written length of the polynomial.\footnote{Or, if we were willing to go a little beyond polynomials, we could use the system of equations given in \cite[Thm. 1]{Jones1982}, which uses an exponential and some binomial coefficients but is less than half as long as the system that yields Figure~\ref{fig:equation} and only has $12$ unknowns.} Clearly it would be quite easy for us to write this in place of $q$ in the polynomial given in \cref{lem:polynomial-engineering}, write four suprema and infima in front of the resulting expression, and thereby `actually write down' something like the $f(x)$ in \cref{thm:polynomial-Vitali}, right?

Well, the next issue is that we would need to determine the parameters $m_1$, $m_2$, and $m_3$. Conceptually these code the Turing machine that enumerates the relevant computably enumerable set. Working out this Turing machine would require carefully writing out the coding of models of fragments of $\ZFC$ as elements of $\Nb^\Nb$, writing G\"odel's formula defining $L$ (which involves internal coding of syntax, much like his proof of the incompleteness theorems), using these to define $<_L$, and then unwinding the various results in effective descriptive set theory we used to transfer these constructions from $\Nb^\Nb$ to $\Rb$ and produce the relevant semicomputable set.  

A bigger factor in the complexity of the resulting expression might be the translation from a Turing machine to the coefficients $m_1$, $m_2$, and $m_3$. One might think that this would be fairly straightforward, but the issue is that work on optimizing the complexity of universal polynomials did not proceed by building more efficient encodings of Turing machines (or any of the other standard basic models of computation) into polynomials. What Jones actually did in \cite{Jones1982} is build a polynomial that is able to simulate \emph{another} polynomial of much higher degree,\footnote{In the case of the $9$-unknown polynomial, there is a third layer of simulation---polynomials within polynomials within polynomials. This is the reason that the number $58$ occurs in the degree of the $9$-unknown polynomial; its construction uses the $58$-unknown degree $4$ polynomial.} relying on the existing construction of a universal polynomial. This means that in order to go all the way, one would need to follow the steps of one of the direct proofs of \cref{fact:MRDP} (such as the one given in \cite{Murty2019-pi}). Since these proofs are written to be easier to understand conceptually, they aren't particularly concerned with controlling complexity. As far as I know there hasn't even been a published estimate of how big the resulting polynomial would be.

Despite how daunting these issues might seem, I think it's not unreasonable to think that we would be able to actually do all of this. I conjecture that it is possible to write down the full expression (with fully computed coefficients) of something like the $f(x)$ in \cref{thm:polynomial-Vitali} in the space of a relatively short book. Moreover, it is known that the task of simulating a Turing machine becomes easier if we allow ourselves larger (but still entirely mundane) sets of functions \cite{Mazzanti2002}. In particular, with these larger sets of functions it is possible to define projections for the Cantor pairing function directly, allowing us to reduce quantification over tuples of natural numbers to quantification over a single natural number. As such, I will also make the following conjecture.


\begin{conj}
  \begin{sloppypar}
    There is a function $g(x,y,z,n,k)$ that can be written in no more than 10 pages of standard \LaTeX\ using integers and the functions $a+b$, $a-b$, $a\cdot b$, $\left\lfloor \frac{a}{b}\right\rfloor$, $a^b$, $a!$, $\max\{a,b\}$, $\min\{a,b\}$, $\gcd(a,b)$, $\mathrm{lcm}(a,b)$, and $(a\ \mathrm{mod}\ b)$ such that it is independent of $\ZFC$ whether $f(x) = \inf_{y \in \Rb} \sup_{z \in \Rb} \inf_{n \in \Nb} \sup_{k \in \Nb} g(x,y,z,n,k)$ is Lebesgue measurable.
  \end{sloppypar}
\end{conj}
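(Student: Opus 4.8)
The plan is to assemble the conjectured $g(x,y,z,n,k)$ by following exactly the route taken to prove \cref{thm:polynomial-Vitali}, but carrying out every step concretely and taking advantage of the freedom to use a much richer vocabulary of functions than just polynomials. First I would fix an explicit weak base theory---say a fragment of $\ZFC$ or second-order arithmetic strong enough to carry out the construction of $L$, as in \cite[Sec.~VII.4]{Simpson2009}---and write out G\"odel's $\varphi_L$ together with the definition of $<_L$, obtaining a concrete $\lPi^0_2$ presentation of the well-ordering of $\Rb^L$ via \cref{fact:wo-defbl}. Composing with the explicit $\lDelta^1_1$ machinery behind Facts~\ref{fact:effective-Suslin} and \ref{fact:function-sub} and \cref{lem:replace-Baire}, and with the Vitali equivalence relation of \cref{lem:Vitali-equiv-basic}, produces an explicit semicomputable $U \subseteq \Rb^3 \times \Nb$ as in \cref{prop:Vitali-V-L}; concretely this amounts to exhibiting a single Turing machine enumerating the basic open neighborhoods comprising $U$.

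The second step is to replace the polynomial-engineering of \cref{lem:polynomial-engineering} with a more economical ``engineering'' in the enlarged function class. The key point, already flagged in the paragraph preceding the conjecture, is that over the functions $a+b$, $a-b$, $a \cdot b$, $\lfloor a/b \rfloor$, $a^b$, $a!$, $\max$, $\min$, $\gcd$, $\mathrm{lcm}$, and $(a \bmod b)$ one can define the projections of the Cantor pairing function directly (this is the content of \cite{Mazzanti2002}), so there is no need to quantify over tuples $\kbar \in \Nb^m$: a single natural number $k$ suffices, and one can decode from it all the auxiliary data (the witness for the universal Diophantine equation, the witnessing index $t \in W_0$, the ball parameters of \cref{lem:balls}, etc.). I would re-derive the analogue of \cref{lem:polynomial-engineering} in this setting, so that $\sup_{k \in \Nb} g_0(\xbar,n,k)$ computes the indicator-with-infinities pattern ($1$ at $n=0$, $+\infty$ when $\langle \xbar, n-1 \rangle \in U$, $0$ otherwise), and then one more $\inf_{n\in\Nb}$ collapses this to the indicator of the target $\lPi^0_2$ set. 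Prepending $\inf_{y\in\Rb}\sup_{z\in\Rb}$ implements the two real co-projection/projection steps needed to climb from $\lPi^0_2$ to the $\lPi^1_2$ Vitali set, exactly as in \cref{cor:main-prop}; the final $f(x) = \inf_{y \in \Rb}\sup_{z \in \Rb}\inf_{n\in\Nb}\sup_{k\in\Nb} g(x,y,z,n,k)$ is then the indicator of a Vitali subset of $[0,1]$ under $V=L$, and is consistently measurable by \cref{fact:Krivine-proj-meas}, giving the claimed independence.

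The remaining work is the page count, and this is where essentially all the difficulty lies. One must actually write, in the allowed vocabulary: (i) the coding of (fragments of) $\ZFC$-models as elements of $\Nb^\Nb$ or reals; (ii) G\"odel's formula $\varphi_L$ with its internal arithmetization of syntax; (iii) the reduction $<_L \rightsquigarrow$ semicomputable $U$ through effective descriptive set theory; and (iv) the Matiyasevich--Robinson--Davis--Putnam construction itself in a form whose parameters $\mbar$ are genuinely computed rather than inherited from a black-box universal polynomial. The hard part will be keeping all of this within ten \LaTeX\ pages: the honest bottleneck is not any single ingredient but their composition, since the conceptually-written proofs of \cref{fact:MRDP} and of \cref{fact:wo-defbl} are not optimized for length. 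My expectation is that steps (i)--(iii) compress well---the coding of syntax and of Polish-space reductions is tedious but short---while step (iv), even using the Mazzanti-style richer function class to avoid the degree blow-ups and tuple-quantifiers, is the one most at risk of overflowing the budget; establishing that a careful but not heroic write-up fits in ten pages is precisely the content one would need to verify to upgrade this plan to a proof, which is why it is stated as a conjecture rather than a theorem.
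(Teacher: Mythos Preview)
This statement is a \emph{conjecture} in the paper, not a theorem; the paper offers no proof. Your proposal is not a proof either, and you acknowledge as much in the final sentence: you outline the strategy the paper itself sketches in the paragraphs leading up to the conjecture (unwind the construction of $<_L$ and the effective descriptive set theory reductions, then replace the universal polynomial with a direct simulation in the richer function class via \cite{Mazzanti2002} so that a single $\sup_{k\in\Nb}$ suffices), and you correctly identify that the entire content of the conjecture is the unverified page-count claim. So there is no gap in your reasoning, but there is also nothing to compare against, and your write-up is a restatement of why the paper poses this as a conjecture rather than a resolution of it.
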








\section{Measurability by inspection}
\label{sec:obviously}

Definitions broadly like the one in \cref{thm:polynomial-Vitali} are fairy common in mathematics, yet it is also a relatively universal experience that, in practice, one does not need to worry about measurability of functions one has `actually written down.' %
To what extent can we explain this?

One suspect in \cref{thm:polynomial-Vitali} is the fact that polynomials are not in general uniformly continuous. Given any uniformly continuous function $g(x,y)$, $f(x) = \sup_y g(x,y)$ is also uniformly continuous and therefore measurable. This is a specific case of a general phenomenon, which is that often ostensibly projective sets or functions are actually Borel. (We'll see a more subtle example of this in \cref{sec:o-min}.) For instance, often `quantification' (e.g., suprema, infima, intersections, and unions) over an uncountable family can be replaced with quantification over some dense countable subfamily by some monotonicity or continuity condition.

\subsection{Measurability of analytic sets}

Roughly speaking,
the strongest\footnote{There are of course always stronger results, such as Solovay's result that `provably $\bDelta^1_2$' sets are Lebesgue measurable \cite[Ex.~14.4]{Kanamori2003}.} \ZFC-provable general theorem regarding measurability is \Lusin's result that analytic sets are always universally measurable.%

\begin{defn}
  A subset of a Polish space $X$ is \emph{universally measurable} if it is measurable with regards to the completion of any $\sigma$-finite Borel measure on $X$. A function $f : X \to \olR$ is \emph{universally measurable} if $f^{-1}((r,\infty])$ is universally measurable for every $r \in \olR$.
\end{defn}

Note that the set of universally measurable subsets of a Polish space is always a $\sigma$-algebra, since it is an intersection of $\sigma$-algebras.

\begin{fact}[\Lusin]\label{fact:analytic-univ-meas}
  Any $\bSigma^1_1$ (i.e., analytic) set is universally measurable. %
\end{fact}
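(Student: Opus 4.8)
The plan is to reduce \cref{fact:analytic-univ-meas} to the classical fact that, for a complete $\sigma$-finite Borel measure on a Polish space, the $\sigma$-algebra of measurable sets is closed under the \emph{Souslin operation} (the operation $\mathcal{A}$ on schemes of sets indexed by $\Nb^{<\omega}$). Indeed, by \cref{defn:proj} every $\bSigma^1_1$ set $A \subseteq X$ is a continuous image of a closed subset of $\Nb^\Nb$ (a Borel subset of $X \times \Nb^\Nb$ is a continuous image of a closed $C \subseteq \Nb^\Nb$, and one postcomposes with the projection $\pi$), and hence admits a \emph{Souslin-scheme representation} $A = \bigcup_{\sigma \in \Nb^\Nb}\bigcap_n P_{\sigma\res n}$ with each $P_s$ ($s \in \Nb^{<\omega}$) closed; that is, $A$ is the result of applying the Souslin operation to a family of Borel sets. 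So it suffices to show that this operation does not leave the completion of a Borel measure.

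First I would fix a $\sigma$-finite Borel measure $\mu$ on $X$ and reduce to the case that $\mu$ is \emph{finite}: decomposing $X$ into countably many disjoint Borel pieces of finite measure and rescaling, a set is measurable for $\mu$ exactly when its intersection with each piece is measurable for the corresponding finite restricted measure. Writing $\mathcal{M}$ for the completion $\sigma$-algebra of the (now finite) measure $\mu$ and $\mu^*$ for its outer measure, I would record that $\mu^*$ is \emph{regular} — every set $E$ has a Borel hull $\widetilde{E} \supseteq E$ with $\mu(\widetilde{E}) = \mu^*(E)$, by finiteness — and that, consequently, $\mu^*$ is continuous from below along arbitrary increasing sequences of sets.

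Next, given the representation $A = \bigcup_\sigma \bigcap_n P_{\sigma\res n}$ (with the scheme arranged to be decreasing along initial segments, which loses nothing), set $B_s = \bigcup\{\bigcap_n P_{\sigma\res n} : \sigma \in \Nb^\Nb,\ \sigma\res|s| = s\}$, so that $B_\emptyset = A$, $B_s \subseteq P_s$, and $B_s = \bigcup_i B_{s ^\frown i}$. I would pick Borel hulls $\widetilde{B}_s \supseteq B_s$ with $\mu(\widetilde{B}_s) = \mu^*(B_s)$, arranged (by intersecting) so that $\widetilde{B}_s \subseteq P_s$ and $\widetilde{B}_s \subseteq \widetilde{B}_t$ whenever $t \subseteq s$, and put $D_s = \widetilde{B}_s \setminus \bigcup_i \widetilde{B}_{s ^\frown i}$. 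The technical heart of the proof is the observation that $\mu(D_s) = 0$ for every $s$: from $B_s = \bigcup_i B_{s ^\frown i} \subseteq \bigcup_i \widetilde{B}_{s ^\frown i}$ and continuity of $\mu^*$ from below, $\mu(\widetilde{B}_s) = \mu^*(B_s) = \lim_k \mu^*(\bigcup_{i \le k} B_{s ^\frown i}) \le \sup_k \mu(\bigcup_{i\le k}\widetilde{B}_{s ^\frown i}) \le \mu(\bigcup_i \widetilde{B}_{s ^\frown i}) \le \mu(\widetilde{B}_s)$, forcing equality throughout.

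Finally I would verify $\widetilde{B}_\emptyset \setminus A \subseteq \bigcup_{s \in \Nb^{<\omega}} D_s$: any $x \in \widetilde{B}_\emptyset$ lying in no $D_s$ can be threaded through the sets $\widetilde{B}_s$ to produce a branch $\sigma \in \Nb^\Nb$ with $x \in \widetilde{B}_{\sigma\res n} \subseteq P_{\sigma\res n}$ for all $n$, so $x \in \bigcap_n P_{\sigma\res n} \subseteq A$. Hence $A \subseteq \widetilde{B}_\emptyset$ differs from the Borel set $\widetilde{B}_\emptyset$ by a countable union of $\mu$-null Borel sets, so $A \in \mathcal{M}$; since $\mu$ was arbitrary, $A$ is universally measurable. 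The main obstacle is really this whole package — the Souslin-scheme representation of analytic sets together with the tree-threading argument and its measure bookkeeping — rather than any single isolated step. (An alternative organization avoids the hulls by invoking Choquet's capacitability theorem, after checking that $\mu^*$ is a Choquet capacity — monotone, continuous from below, and continuous from above on decreasing sequences of compact sets, the last using tightness of finite Borel measures on Polish spaces — and concluding that every analytic set is approximable from within by compact subsets of nearly full outer measure; the combinatorial core is the same tree-pruning argument, so the difficulty is essentially unchanged.)
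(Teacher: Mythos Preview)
Your proposal is a correct and standard proof of Lusin's theorem via the closure of complete $\sigma$-finite measures under the Souslin operation (with the Choquet capacitability alternative noted at the end). However, there is nothing to compare it against: the paper states \cref{fact:analytic-univ-meas} as a classical fact attributed to Lusin and gives no proof at all, simply citing it for later use in \cref{lem:semi-sigma}. So your write-up is not a match for the paper's argument but rather a self-contained proof of a result the paper takes as a black box.
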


Since analytic sets are precisely the images of Borel sets under Borel maps, this class is fairly broad. Furthermore, although complements of analytic sets are not in general analytic, the pointclass of analytic sets is closed under countable unions and intersections (\cref{fact:Bool}).

For the purpose of using \cref{fact:analytic-univ-meas} to show that certain classes of functions are universally measurable, we'll need the following definition and lemma. 

\begin{defn}\label{defn:semi-sigma}
  For any Polish space $X$, a function $f : X \to \olR$ is \emph{(lower) semi-$\bSigma^1_1$} if for every $r \in \olR$, $f^{-1}((r,\infty])$ is $\bSigma^1_1$.
\end{defn}

Unfortunately the term \emph{semianalytic} is both already in use and will appear later in this paper, so we have opted for the visually clunky but unambiguous term in \cref{defn:semi-sigma}. Strictly speaking, the notion of \emph{upper semi-$\bSigma^1_1$} function clearly also makes sense, but to avoid an overabundance of words we will only use the term `semi-$\bSigma^1_1$' and this will always refer to the notion in \cref{defn:semi-sigma}.

\begin{lem}$ $\label{lem:semi-sigma}
  \begin{enumerate}
  \item\label{Borel-char} A function $f$ is Borel if and only if $f$ and $-f$ are both semi-$\bSigma^1_1$. In particular, Borel functions are semi-$\bSigma^1_1$.
  \item\label{univ-meas} Any semi-$\bSigma^1_1$ function is universally measurable.
  \item\label{closed-suplevel} $f$ is semi-$\bSigma^1_1$ if and only if for any $r \in \olR$, $f^{-1}([r,\infty])$ is $\bSigma^1_1$.
  \item\label{countable-quant} If $f: X \times Y \to \olR$ is semi-$\bSigma^1_1$, then $g(x) = \sup_{a \in A}f(x,a)$ and $h(x) = \inf_{a \in A}f(x,a)$ are semi-$\bSigma^1_1$ for any countable $A \subseteq Y$.
  \item\label{uncountable-quant} If $f: X \times Y \to \olR$ is semi-$\bSigma^1_1$, then $\ell(x) = \sup_{y \in Y}f(x,y)$ is semi-$\bSigma^1_1$.
  \end{enumerate}
\end{lem}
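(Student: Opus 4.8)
The plan is to prove the five parts in order, relying on Suslin's theorem (\cref{fact:Suslin}) for the Borel $\Leftrightarrow \bDelta^1_1$ identifications, on \cref{fact:analytic-univ-meas} for part~(2), and on the closure properties of \cref{fact:Bool} together with the boldface analog of \cref{fact:function-sub} for parts~(3)--(5). Essentially everything reduces to routine bookkeeping in the extended reals.

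For part~(1), one direction is immediate: if $f$ is Borel then $f^{-1}((r,\infty])$ is Borel, hence $\bSigma^1_1$ by \cref{fact:Suslin}, and likewise for $-f$. For the converse I would observe that, for $r \in \Rb$, $f^{-1}([-\infty,r]) = \bigcap_{n\geq 1}(-f)^{-1}\big((-r-\tfrac1n,\infty]\big)$, which is $\bSigma^1_1$ by \cref{fact:Bool}; hence $f^{-1}((r,\infty])$ is simultaneously $\bSigma^1_1$ and $\bPi^1_1$, i.e.\ Borel by \cref{fact:Suslin}. Since the rays $(r,\infty]$, $r \in \Rb$, generate the Borel $\sigma$-algebra of $\olR$ and $f^{-1}$ commutes with countable Boolean operations, this makes $f$ Borel. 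Part~(2) then follows at once: if $f$ is semi-$\bSigma^1_1$, then each $f^{-1}((r,\infty])$ is $\bSigma^1_1$ and hence universally measurable by \cref{fact:analytic-univ-meas}, which is precisely the definition of $f$ being universally measurable.

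For part~(3) I would just rewrite the level sets as countable unions and intersections: for $r \in \Rb$, $f^{-1}([r,\infty]) = \bigcap_{n\geq1}f^{-1}\big((r-\tfrac1n,\infty]\big)$ and $f^{-1}((r,\infty]) = \bigcup_{n\geq1}f^{-1}\big([r+\tfrac1n,\infty]\big)$, handling the cases $r = \pm\infty$ directly (e.g.\ $f^{-1}([-\infty,\infty]) = X$, $f^{-1}((\infty,\infty]) = \emptyset$, $f^{-1}(\{\infty\}) = \bigcap_{n}f^{-1}((n,\infty])$), and then invoke closure of $\bSigma^1_1$ under countable unions and intersections. For parts~(4) and~(5), the key preliminary observation is that the section $\{x \in X : \langle x,a\rangle \in B\}$ of a $\bSigma^1_1$ set $B \subseteq X \times Y$ is again $\bSigma^1_1$, being the preimage of $B$ under the continuous (hence $\bDelta^1_1$) map $x \mapsto \langle x,a\rangle$, by the boldface analog of \cref{fact:function-sub}. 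Then for $g(x) = \sup_{a\in A}f(x,a)$ with $A$ countable, $g^{-1}((r,\infty]) = \bigcup_{a\in A}\big(f^{-1}((r,\infty])\big)_a$ is a countable union of $\bSigma^1_1$ sets, hence $\bSigma^1_1$ by \cref{fact:Bool}. For $h(x) = \inf_{a\in A}f(x,a)$ I would instead use part~(3) and write $h^{-1}([r,\infty]) = \bigcap_{a\in A}\big(f^{-1}([r,\infty])\big)_a$, which is a countable intersection of $\bSigma^1_1$ sets by part~(3) and \cref{fact:Bool}, so $h$ is semi-$\bSigma^1_1$ by part~(3) again. Finally, for part~(5), $\ell^{-1}((r,\infty]) = \pi_X\big(f^{-1}((r,\infty])\big)$ where $\pi_X : X \times Y \to X$, and this is $\bSigma^1_1$ because analytic sets are closed under projection along a Polish space.

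The one spot that needs care — and the reason the lemma treats $\sup$ and $\inf$ asymmetrically — is the infimum in part~(4): the identity $\{x : \forall a\,(f(x,a) > r)\} = \{x : \inf_a f(x,a) > r\}$ is \emph{false}, so one cannot handle $h$ via open super-level sets the way one handles $g$ and must route through the closed super-level sets of part~(3), where $\{x : \forall a\,(f(x,a)\geq r)\} = \{x : \inf_a f(x,a)\geq r\}$ does hold. This same asymmetry is exactly why part~(5) is stated only for the supremum: the infimum of a semi-$\bSigma^1_1$ function over an uncountable $Y$ corresponds to a co-projection and will in general be only $\bPi^1_1$, not $\bDelta^1_1$. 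Apart from this bookkeeping point, I do not anticipate any real obstacle.
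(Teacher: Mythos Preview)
Your proposal is correct and follows essentially the same approach as the paper: part~(2) is immediate from \cref{fact:analytic-univ-meas}, part~(3) comes from rewriting closed/open super-level sets as countable intersections/unions and applying \cref{fact:Bool}, part~(4) handles $g$ via unions of open-super-level sections and $h$ via intersections of closed-super-level sections through part~(3), and part~(5) is the projection argument. The only cosmetic differences are that the paper derives part~(1) from part~(3) (rather than directly as you do) and justifies that the sections $\{x : \langle x,a\rangle \in B\}$ are $\bSigma^1_1$ by noting $X \times \{a\}$ is closed, whereas you pull back along the continuous map $x \mapsto \langle x,a\rangle$; both are fine.
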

\begin{proof}
  \ref{Borel-char} follows from \ref{closed-suplevel} and \cref{fact:Suslin}. \ref{univ-meas} follows immediately from \cref{fact:analytic-univ-meas}. \ref{closed-suplevel} follows from $f^{-1}([r,\infty]) = \bigcap_{s < r,~s \in \Qb}f^{-1}((r,\infty])$, $f^{-1}((r,\infty]) = \bigcup_{s > r,~s \in \Qb}f^{-1}([r,\infty])$ and \cref{fact:Bool}.

  For \ref{countable-quant}, first note that
  \(
    g^{-1}((r,\infty]) = \bigcup_{a \in A}\{ x \in X : \langle x,a \rangle \in f^{-1}((r,\infty])).
  \)
  Since $X \times \{a\}$ is closed in $X \times Y$ for each $a \in A$, the terms in the union are $\bSigma^1_1$, so we have that $g$ is semi-$\bSigma^1_1$. For $h$, we similarly have that $h^{-1}([r,\infty]) = \bigcap_{a \in A}\{x \in X : \langle x,a \rangle \in g^{-1}([r,\infty])\}$, implying that each $h^{-1}([r,\infty])$ is $\bSigma^1_1$ and therefore $h$ is semi-$\bSigma^1_1$ by \ref{closed-suplevel}.

  Finally, for \ref{uncountable-quant}, note that $\ell^{-1}((r,\infty]) = \pi(f^{-1}((r,\infty])))$, where $\pi : X \times Y \to X$ is the projection. Since projections of $\bSigma^1_1$ sets are $\bSigma^1_1$, we have that $\ell$ is semi-$\bSigma^1_1$.
\end{proof}

\subsection{An extra quantifier from \texorpdfstring{$\sigma$}{σ}-compactness}
\label{sec:extra-quant-sigma-compact}

Another suspect in \cref{thm:polynomial-Vitali} is alternation of quantifiers (i.e., alternation of suprema and infima). We can show that $4$ alternating blocks of quantifiers are necessary for something like it to work.

Recall that a topological space is \emph{$\sigma$-compact} if it is a countable union of compact sets. Also recall that a function $g : X \to \olR$ is \emph{lower semi-continuous} if the open superlevel sets $g^{-1}((r,\infty])$ are open for each $r \in \olR$. Note that continuous functions in particular are lower semi-continuous.

\begin{prop}\label{prop:three-quant-insufficient}
  Fix a $\sigma$-compact Polish space $Y$ and an arbitrary $\sigma$-compact topological space $Z$. Fix an arbitrary set $W$ (which we will think of as a discrete topological space). Let $g : Y \times Z \times W \to \olR$ be a lower semi-continuous function. Then the function $f : Y \to \olR$ defined by
  \[
    f(y) = \inf_{z \in Z}\sup_{w \in W}g(y,z,w)
  \]
  is Borel. 
\end{prop}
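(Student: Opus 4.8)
The plan is to analyse the superlevel sets of $f$ directly, stripping off the three operations $\sup_w$ and $\inf_z$ one at a time and tracking where each resulting set lands in the Borel hierarchy, with the $\sigma$-compactness of $Z$ carrying the essential weight at the very end. (The degenerate cases $Z = \emptyset$ or $W = \emptyset$ make $f$ constant and can be dispatched at the outset.)

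First I would set $h(y,z) = \sup_{w \in W} g(y,z,w)$ and note that $h : Y \times Z \to \olR$ is lower semi-continuous: each slice $g(\cdot,\cdot,w)$ is lower semi-continuous on $Y \times Z$ because $W$ is discrete, and a pointwise supremum of lower semi-continuous functions is lower semi-continuous. Hence $h^{-1}((r,\infty])$ is open and $h^{-1}([r,\infty]) = \bigcap_{n \geq 1} h^{-1}((r - \tfrac{1}{n},\infty])$ is $G_\delta$, for every $r \in \olR$. Next I would observe that for each $r \in \olR$ one has $f(y) > r$ iff $\inf_{z \in Z} h(y,z) > r$ iff there is a rational $q > r$ with $h(y,z) \geq q$ for all $z$, so that
\[
  f^{-1}((r,\infty]) \;=\; \bigcup_{q \in \Qb,\; q > r} \Bigl( Y \setminus \pi\bigl( h^{-1}([-\infty,q)) \bigr) \Bigr),
\]
where $\pi : Y \times Z \to Y$ is the projection. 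Since $h^{-1}([-\infty,q))$ is the complement of the $G_\delta$ set $h^{-1}([q,\infty])$, it is $F_\sigma$, and so the whole argument reduces to showing that $\pi(C)$ is Borel whenever $C \subseteq Y \times Z$ is $F_\sigma$.

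This last reduction is the step I expect to be the crux, and it is exactly where $\sigma$-compactness is used. Writing $Z = \bigcup_n K_n$ with each $K_n$ compact and $C = \bigcup_m C_m$ with each $C_m$ closed in $Y \times Z$, one has $\pi(C) = \bigcup_{m,n} \pi\bigl( C_m \cap (Y \times K_n) \bigr)$, and $C_m \cap (Y \times K_n)$ is closed in the subspace $Y \times K_n$. By the tube lemma — equivalently, the standard fact that projection along a compact factor is a closed map — each $\pi\bigl( C_m \cap (Y \times K_n) \bigr)$ is closed in $Y$, so $\pi(C)$ is a countable union of closed sets, hence $F_\sigma$, hence Borel. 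Feeding this back into the displayed formula, $f^{-1}((r,\infty])$ is a countable union of Borel sets for every $r$ (and it suffices to check rational $r$), so $f$ is Borel. Everything other than the closed-projection fact is routine bookkeeping in the Borel hierarchy, so that is the one ingredient worth isolating cleanly.
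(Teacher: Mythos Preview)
Your proof is correct and follows essentially the same approach as the paper: both first observe that $h(y,z)=\sup_w g(y,z,w)$ is lower semi-continuous, then analyze level sets of $f=\inf_z h$ and use $\sigma$-compactness of $Z$ to control the projection. The only cosmetic differences are that the paper works with the sublevel sets $f^{-1}([-\infty,r))$ directly (rather than the superlevel sets) and phrases the projection step as ``closed in $\sigma$-compact is $\sigma$-compact, hence projects to $F_\sigma$'' instead of invoking the tube lemma; your version has the minor advantage of not actually needing $Y$ to be $\sigma$-compact.
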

\begin{proof}
  The function $f_0(y,z) = \sup_{w \in W}g(y,z,w)$ is lower semi-continuous, since it is the supremum of a family of lower semi-continuous functions. Therefore the closed sublevel sets $f_0^{-1}([-\infty,r]) \coloneq \{(y,z) : f_0(y,z) \leq r\}$ are closed. Since $Y$ and $Z$ are $\sigma$-compact, their product is as well and so each of the sets $f_0^{-1}([-\infty,r])$ is $\sigma$-compact.

  Now $f(y) = \inf_{z \in Z}f_0(y,z)$. We have that $f^{-1}([-\infty,r)) \coloneq \{y : f(y) < r\}$ is equal to $\bigcup_{s < r}\pi_Z(f_0^{-1}([-\infty,s]))$ (where $\pi_Z : Y \times Z \to Z$ is the canonical projection map). By monotonicity, we also have that $f^{-1}([-\infty,r)) = \bigcup_{s < r,~s \in \Qb}\pi_Z(f_0^{-1}([-\infty,s]))$. For each $s$, we have that since $f_0^{-1}([-\infty,s])$ is $\sigma$-compact, $\pi_Z(g_0^{-1}([-\infty,s]))$ is as well and is hence also $F_\sigma$ (since $Y$ is Hausdorff). Therefore each $f^{-1}([-\infty,r))$ is also $F_\sigma$, since it is a countable union of $F_\sigma$ sets.
\end{proof}

\begin{cor}\label{cor:three-quant-insufficient-1}
  If $X$, $Y$, $Z$, and $W$ are Polish spaces (with $Y$ and $Z$ $\sigma$-compact) and $g : X \times Y \times Z \times W \to \olR$ is lower semi-continuous, then
  \[
    f(x) = \sup_{y \in Y}\inf_{z \in Z}\sup_{w \in W}g(x,y,z,w)
  \]
   is semi-$\bSigma^1_1$ and therefore universally measurable.
\end{cor}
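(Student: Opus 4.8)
The plan is to peel the quantifiers off from the inside out, using \cref{prop:three-quant-insufficient} for the innermost block $\inf_{z}\sup_{w}$ and part~(\ref{uncountable-quant}) of \cref{lem:semi-sigma} for the outermost $\sup_{y}$. Set $h(x,y) = \inf_{z \in Z}\sup_{w \in W}g(x,y,z,w)$, so that $f(x) = \sup_{y \in Y}h(x,y)$; it then suffices to show that $h : X \times Y \to \olR$ is Borel, since part~(\ref{Borel-char}) of \cref{lem:semi-sigma} makes a Borel function semi-$\bSigma^1_1$, part~(\ref{uncountable-quant}) then upgrades this to $f$ being semi-$\bSigma^1_1$, and part~(\ref{univ-meas}) (i.e.\ \cref{fact:analytic-univ-meas}) finally delivers universal measurability. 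As a preliminary observation, each section $(x,y,z) \mapsto g(x,y,z,w)$ of $g$ is lower semi-continuous on $X \times Y \times Z$, so $f_0(x,y,z) \coloneq \sup_{w \in W}g(x,y,z,w)$, being a supremum of lower semi-continuous functions, is lower semi-continuous as well.

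The claim that $h$ is Borel is \emph{almost} \cref{prop:three-quant-insufficient} applied with $X \times Y$ in the role of the outer space: $X \times Y$ is Polish, $Z$ is $\sigma$-compact, and $g$ is lower semi-continuous in all of its arguments. The only mismatch---and this is the one point I expect to need genuine care---is that \cref{prop:three-quant-insufficient} assumes the outer space to be $\sigma$-compact, whereas $X$, hence $X \times Y$, need not be (take $X = \Nb^\Nb$). The fix is to note that $\sigma$-compactness of the outer space is never actually used in the proof of \cref{prop:three-quant-insufficient}: the argument only needs $Z$---the factor being infimized over---to be $\sigma$-compact, together with the elementary fact that the projection $A \times K \to A$ along a compact factor $K$ is a closed map. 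Indeed, writing $Z = \bigcup_n K_n$ with each $K_n$ compact, for any closed $C \subseteq X \times Y \times Z$ we have $\pi_{X \times Y}(C) = \bigcup_n \pi_{X \times Y}\!\left( C \cap (X \times Y \times K_n) \right)$, a countable union of closed subsets of $X \times Y$, hence $F_\sigma$, with no hypothesis on $X$ or $Y$.

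Granting this, the proof of \cref{prop:three-quant-insufficient} goes through verbatim with $x$ carried along as a passive parameter: each sublevel set $f_0^{-1}([-\infty,s])$ is closed in $X \times Y \times Z$, and since $h(x,y) = \inf_{z}f_0(x,y,z)$, monotonicity gives $h^{-1}([-\infty,r)) = \bigcup_{s < r,\ s \in \Qb}\pi_{X \times Y}\!\left( f_0^{-1}([-\infty,s]) \right)$, which is $F_\sigma$ by the previous paragraph; consequently each $h^{-1}([r,\infty])$ is $G_\delta$ and $h$ is Borel. Then $f(x) = \sup_{y \in Y}h(x,y)$ is semi-$\bSigma^1_1$ and universally measurable as indicated. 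The leftover bookkeeping---checking the displayed identity for $h^{-1}([-\infty,r))$, and that sections of a lower semi-continuous function are lower semi-continuous---is exactly as in the proof of \cref{prop:three-quant-insufficient}, and I would not belabor it.
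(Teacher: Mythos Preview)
Your proof is correct and follows the intended approach: peel off the inner $\inf_z\sup_w$ via \cref{prop:three-quant-insufficient} (with $X\times Y$ playing the role of the outer space) to get $h$ Borel, then apply \cref{lem:semi-sigma}(\ref{uncountable-quant}) for the outer $\sup_y$. Your observation that the proof of \cref{prop:three-quant-insufficient} only actually needs $\sigma$-compactness of the factor being projected out---using that projection along a compact factor is closed---rather than of the full product is exactly the adaptation needed to bridge the proposition as stated to what the corollary requires.
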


One thing to note is that \cref{prop:three-quant-insufficient} does need $\sigma$-compactness. Without $\sigma$-compactness, the best we can guarantee is that if $X$ and $Y$ are Polish spaces and $Z$ is an arbitrary set, then for any continuous $g : X \times Y\times Z \to \olR$, $f(x) = \inf_{y \in Y} \sup_{z \in Z}g(x,y,z)$ is semi-$\bSigma^1_1$.

\subsection{Logical tameness and o-minimality}
\label{sec:o-min}

Another culprit in \cref{thm:polynomial-Vitali} is the \emph{interaction} between quantifying over $\Rb$ and quantifying over $\Nb$. It is of course a standard fact that suprema and infima of countable families of measurable functions are measurable (since measurable sets form a $\sigma$-algebra), but it turns out that if we start from `logically tame' functions, solely quantifying over $\Rb$ can be tame as well.

The earliest named example of this phenomenon is probably the Tarski--Seidenberg theorem, which says that projections of semialgebraic sets are semialgebraic, where a set $X \subseteq \Rb^n$ is \emph{semialgebraic} if it is a finite Boolean combination of polynomial equalities and inequalities (i.e., sets of the form $\{\xbar \in \Rb^n : p(\xbar) = 0\}$ and $\{\xbar \in \Rb^n : p(\xbar) > 0\}$ for polynomial $p$). Semialgebraic sets are locally closed and hence always Borel. 

Since complements of semialgebraic sets are semialgebraic, it follows immediately from the Tarski--Seidenberg theorem that every set $X \subseteq \Rb^n$ that is first-order definable in the reals as an (ordered) field is semialgebraic. In the context of this paper, this has the following consequence.

\begin{fact}[Tarski--Seidenberg]\label{fact:Tarsk-Seidenberg}
  Every function of the form
  \[
    f(x) = \qqq_{v \in \Rb} \qqq_{v \in \Rb} \qqq_{v \in \Rb}\cdots p(x,\dots)
  \]
  (where each $\qqq$ is $\sup$ or $\inf$, each $v$ is a variable, and $p$ is a polynomial with real coefficients) is Borel and therefore universally measurable.
\end{fact}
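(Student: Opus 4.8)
The plan is to show that for every $f$ of the given form and every $r \in \Rb$ the superlevel set $f^{-1}((r,\infty])$ is semialgebraic; since semialgebraic sets are finite Boolean combinations of zero sets and positivity sets of polynomials, they are in particular Borel, and a function all of whose superlevel sets $f^{-1}((r,\infty])$, $r \in \olR$, are Borel is itself Borel. Universal measurability is then immediate (e.g.\ from \cref{lem:semi-sigma}(\ref{Borel-char}) and (\ref{univ-meas}), or simply because every Borel set lies in the completion of every $\sigma$-finite Borel measure). So the only real content is the semialgebraicity, and this is where the Tarski--Seidenberg theorem enters.

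I would argue by induction on the number $m$ of $\sup/\inf$ operators, but because the intermediate functions obtained by stripping off outer operators are $\olR$-valued (a $\sup$ of a polynomial over $\Rb$ can be $+\infty$), some care is needed about what to carry along. The cleanest form of the induction hypothesis is: for every $g \colon \Rb^{N} \to \olR$ of the form $\qqq_{v_1 \in \Rb}\cdots\qqq_{v_m \in \Rb}\,p(z,v_1,\dots,v_m)$, both of the sets
\[
  S^{>}_g = \{(z,t) \in \Rb^{N+1} : g(z) > t\}
  \qquad\text{and}\qquad
  S^{\geq}_g = \{(z,t) \in \Rb^{N+1} : g(z) \geq t\}
\]
(with the inequalities computed in $\olR$) are semialgebraic. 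The base case $m = 0$ is trivial, since $S^{>}_p = \{p(z)-t > 0\}$ and $S^{\geq}_p = \{p(z)-t \geq 0\}$. For the inductive step write $g(z) = \qqq_{v \in \Rb}\,h(z,v)$ with $h$ having $m-1$ operators; since $\inf_v h = -\sup_v(-h)$ and the hypothesis for $-h$ follows from that for $h$ (negation merely swaps and reflects $S^{>}$ and $S^{\geq}$), it suffices to treat $\qqq = \sup$. In that case $S^{>}_g$ is the projection of $S^{>}_h$ along $v$, using the elementary identity $\sup_v h(z,v) > t \iff \exists v\, [h(z,v) > t]$, and $S^{\geq}_g = \{(z,t) : \forall s\,[\,s < t \to \exists v\, (h(z,v) > s)\,]\}$, using $\sup_v h(z,v) \geq t \iff \forall s < t\,\exists v\, [h(z,v) > s]$. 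Both are built from the semialgebraic set $S^{>}_h$ by finitely many projections, complements, intersections, and unions, hence are semialgebraic by Tarski--Seidenberg. Applying this with $g = f$ and slicing at $t = r$ yields $f^{-1}((r,\infty])$ semialgebraic for each $r \in \Rb$; the remaining superlevel sets $f^{-1}((-\infty,\infty]) = \Rb^{k}$ and $f^{-1}((+\infty,\infty]) = \emptyset$ are trivially Borel, so $f$ is Borel.

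The main obstacle is more a bookkeeping hazard than a genuine difficulty: it is exactly the $\olR$-valuedness. One cannot naively ``apply Tarski--Seidenberg to the graph of $\sup_v p$,'' and in particular the \emph{closed} superlevel set of a supremum over the non-compact space $\Rb$ genuinely needs the extra $\forall s < t\,\exists v$ quantifier --- writing it instead as a countable union $\bigcup_{s \in \Qb,\, s < t}$ of semialgebraic slices would only give a Borel set and would break the induction. Carrying both $S^{>}_g$ and $S^{\geq}_g$ through the induction, rather than just one of them, is precisely what makes the argument close up. Everything else is a routine unwinding of the $\olR$-valued definitions of $\sup$ and $\inf$ together with the closure of the class of semialgebraic sets under first-order operations.
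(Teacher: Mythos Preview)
Your argument is correct and follows essentially the same route as the paper: both show that each superlevel set $f^{-1}((r,\infty])$ is semialgebraic via Tarski--Seidenberg, hence Borel. The paper compresses this to the single observation that $f$ is first-order definable (with parameters) in $(\Rb,+,\cdot)$, whereas you explicitly carry out the induction on the number of $\sup/\inf$ operators and track $S^{>}_g$ and $S^{\geq}_g$ to handle the $\olR$-valuedness --- a careful unwinding of exactly that definability claim.
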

\begin{proof}
  The $\olR$-valued function $f$ is first-order definable\footnote{We say that a function $f : \Rb^n \to \olR$ is \emph{definable} in some structure on $\Rb$ if the sets $\{\langle \xbar ,y  \rangle \in \Rb^{n+1} : f(\xbar) = y\}$, $\{\xbar \in \Rb^n : f(\xbar) = \infty\}$, and $\{\xbar \in \Rb^n : f(\xbar) = -\infty\}$ are all definable in that structure.} with parameters in $(\Rb,+,\cdot)$. Hence each set of the form $f^{-1}((r,\infty])$ is semialgebraic and thereby Borel.
\end{proof}

This can be extended beyond polynomials to anything first-order definable in $(\Rb,+,\cdot)$, which includes things like $\sqrt{x}$ and rational function (once we fix some reasonable convention for handling normally undefined values). For instance, the function
\[
  f(x) =
  \begin{cases}
    \sqrt{x} & x \geq 0 \\
    0 & x < 0
  \end{cases}
\]
is definable in $(\Rb,+,\cdot)$ by the formula $\varphi(x,y) \equiv (x < 0 \wedge y = 0) \vee (x \geq 0 \wedge y \geq 0 \wedge y \cdot y = x)$.\footnote{Of course the relation $x \leq y$ is definable by the formula $\psi(x,y) \equiv \exists z(x + z \cdot z = y)$.}

In the 80s and into the 90s, seminal model-theoretic work of van den Dries, Knight, Pillay, and Steinhorn isolated a general class of structures, the \emph{o-minimal structures}, that admit this kind of automatic topological tameness for definable objects. This extended earlier work of \L ojasiewicz and Hironaka on \emph{semianalytic} and \emph{subanalytic} sets, which are certain sets defined in terms of equalities and inequalities of real-analytic functions and which exhibit topological tameness analogous to that of semialgebraic sets (see \cite[Ch.~5]{Acquistapace2022}).

\begin{defn}
  A first-order structure $\Rc$ extending $(\Rb,<)$ is \emph{o-minimal} if any definable (with parameters) subset of $\Rb$ is a finite union of intervals (of possibly $0$ or infinite length).
\end{defn}

Note that since semialgebraic subsets of $\Rb$ are always finite unions of intervals, we have that $(\Rb, +, \cdot)$ is o-minimal. This property ends up having profound structural consequences---for instance it implies that every definable set in any number of dimensions has finitely many connected components---and it has found many applications outside of model theory \cite{Binyamini2023,TQFT2024}. This is sometimes regarded as an answer to Grothendieck's challenge to `investigate classes of sets with the tame topological properties of semialgebraic sets' issued in his \emph{Esquisse d’un Programme}. 

\begin{fact}[{Knight, Pillay, Steinhorn \cite{Knight1986}, see also \cite[Ch.~3]{Dries1998}}]
  If $f : \Rb^k \to \olR$ is definable in an o-minimal structure $\Rc$, then for every (open, closed, or half-open) interval $I \subseteq \olR$, $f^{-1}(I)$ is a finite union of connected submanifolds of $\Rb^k$.
\end{fact}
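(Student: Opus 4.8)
The plan is to reduce the statement to the cell decomposition theorem for o-minimal structures. First I would observe that $f^{-1}(I)$ is itself a definable subset of $\Rb^k$. Writing $I = (I \cap \Rb) \cup (I \cap \{-\infty,+\infty\})$, the set $f^{-1}(I \cap \Rb)$ is definable because $I \cap \Rb$ is a definable interval of $\Rb$ and, by the working definition of a definable $\olR$-valued function recalled in the footnote above, the restricted graph $\{\langle \xbar, y\rangle : f(\xbar) = y\} \cap (\Rb^k \times \Rb)$ is definable in $\Rc$; likewise $f^{-1}(\{+\infty\})$ and $f^{-1}(\{-\infty\})$ are definable by hypothesis. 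Hence $f^{-1}(I)$ is a finite union of definable subsets of $\Rb^k$, so it is definable, and it suffices to show that every definable subset $A \subseteq \Rb^k$ is a finite union of connected submanifolds of $\Rb^k$.

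Next I would invoke the cell decomposition theorem: for any o-minimal structure, every definable subset of $\Rb^k$ can be partitioned into finitely many \emph{cells}, and every cell is a connected definable $C^1$ submanifold of $\Rb^k$; indeed each cell of dimension $d$ is definably $C^1$-diffeomorphic to the open box $(0,1)^d$, with $0$-dimensional cells being singletons and $k$-dimensional cells being open subsets of $\Rb^k$. Since a set definably homeomorphic to $(0,1)^d$ is path-connected, each cell is connected as a subspace of $\Rb^k$. Partitioning $A = f^{-1}(I)$ into finitely many such cells then exhibits it as a finite union of connected submanifolds, which is the claim. If one wants $C^p$ or $C^\infty$ submanifolds, the $C^p$ cell decomposition theorem delivers the same conclusion verbatim.

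The substantive content I am deferring to the literature (e.g.\ \cite[Ch.~3--4]{Dries1998}) is the cell decomposition theorem itself, and this is the real obstacle. It is proved by simultaneous induction on $k$ together with a piecewise-monotonicity-and-continuity statement for definable functions and a uniform-finiteness principle. The base case $k = 1$ combines the o-minimality axiom with the monotonicity theorem (a definable $f : \Rb \to \Rb$ is, on each of finitely many intervals partitioning $\Rb$, either constant or strictly monotone, and continuous there), and the inductive step uses uniform bounds on the number of pieces to glue cell decompositions of the fibres into a cell decomposition of the total space. None of this uses anything about the ambient structure beyond o-minimality, which is precisely why the conclusion is `automatic' in the sense emphasized in this section.
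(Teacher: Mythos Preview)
Your proposal is correct: reducing to definability of $f^{-1}(I)$ and then invoking the cell decomposition theorem is exactly the intended route, and your handling of the $\pm\infty$ values via the footnoted definition is careful and right. Note, however, that the paper does not actually give a proof of this Fact at all --- it is stated as a citation to Knight--Pillay--Steinhorn and to van den Dries's book --- so there is no ``paper's own proof'' to compare against; your sketch is essentially what one finds upon following the reference to \cite[Ch.~3]{Dries1998}.

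One small remark: you assert that every cell is a $C^1$ submanifold, but the basic cell decomposition theorem (for an o-minimal expansion of $(\Rb,<)$ alone) only gives cells that are definably \emph{homeomorphic} to open boxes, which suffices for ``connected topological submanifold.'' The $C^p$ refinement requires the structure to expand an ordered field, which is not assumed in the paper's definition of o-minimality. Since the Fact as stated says only ``submanifolds'' without a smoothness class, the topological version is enough, so this does not affect correctness --- just drop the $C^1$ claim or add the field hypothesis if you want it.
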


If $f(\xbar,y)$ is definable in an o-minimal structure $\Rc$, then the functions $\inf_{y \in \Rb} f(\xbar,y)$ and $\sup_{y \in \Rb}f(\xbar,y)$ are as well. This means that we are able to generalize \cref{fact:Tarsk-Seidenberg} to any o-minimal structure on $\Rb$.

\begin{cor}\label{cor:o-min-meas}
  For any function $g(x,\dots)$ definable in an o-minimal structure $\Rc$, every function of the form
  \[
    f(x) =  \qqq_{v \in \Rb}\qqq_{v \in \Rb}\qqq_{v \in \Rb} \dots g(x,\dots)
  \]
  (where each $\qqq$ is $\sup$ or $\inf$ and each $v$ is a variable) is Borel and therefore universally measurable.
\end{cor}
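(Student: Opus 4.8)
The plan is to reduce the statement to the o-minimal tameness facts already quoted, via the observation (made just before the corollary) that applying $\sup_{v \in \Rb}$ or $\inf_{v \in \Rb}$ to a definable $\olR$-valued function produces another definable $\olR$-valued function. So the first step is to show that $f$ itself is definable in $\Rc$, the second is to conclude from this that $f$ is Borel, and the third is to invoke \cref{lem:semi-sigma} to pass from Borel to universally measurable.

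For the first step I would induct on the number of quantifier blocks in the definition of $f$, with base case the hypothesis that $g(x,\dots)$ is definable in $\Rc$. For the inductive step, given a definable $\olR$-valued function $h(\xbar,v)$, I would exhibit explicit defining formulas for $H(\xbar) = \sup_{v \in \Rb} h(\xbar,v)$: the set $\{\xbar : H(\xbar) = +\infty\}$ is defined by $\forall y \in \Rb\,\exists v \in \Rb\,(h(\xbar,v) > y)$, the set $\{\xbar : H(\xbar) = -\infty\}$ by $\forall v \in \Rb\,(h(\xbar,v) = -\infty)$, and the finite part of the graph $\{\langle \xbar,y \rangle : H(\xbar) = y\}$ by $\forall v\,(h(\xbar,v) \leq y) \wedge \forall \e > 0\,\exists v\,(h(\xbar,v) > y - \e)$ together with $\forall v\,(h(\xbar,v) \neq +\infty)$. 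All of these are first-order over $\Rc$ because the graph of $h$ and the two preimages $h^{-1}(+\infty)$ and $h^{-1}(-\infty)$ are definable and because the order relations on $\olR$ unwind into the order on $\Rb$ plus the $\pm\infty$ cases; the case of $\inf$ is symmetric. Applying this finitely often shows $f$ is definable in $\Rc$.

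For the second step, $f$ is a definable $\olR$-valued function on $\Rb$, so by the quoted result of Knight, Pillay, and Steinhorn (applied with $k=1$), $f^{-1}(I)$ is a finite union of connected submanifolds of $\Rb$ --- i.e.\ of points and open intervals --- for every interval $I \subseteq \olR$; in particular each $f^{-1}((r,\infty])$ is Borel, so $f$ is a Borel function. Then $f$ is universally measurable by \cref{lem:semi-sigma}\ref{Borel-char} and \ref{univ-meas} (or simply because Borel sets are universally measurable). I expect the only point needing real care to be the $\olR$-valued bookkeeping in the inductive step --- keeping the $+\infty$ and $-\infty$ values consistent with the definability convention of the footnote to \cref{fact:Tarsk-Seidenberg} --- together with the trivial observation that the connected submanifolds appearing are Borel; everything else is a direct appeal to the cited o-minimal structure theory.
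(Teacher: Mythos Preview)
Your proposal is correct and matches the paper's intended argument. The paper does not give a separate proof of this corollary; it simply notes just before the statement that $\sup_{y\in\Rb}$ and $\inf_{y\in\Rb}$ preserve definability in $\Rc$, and then relies on the Knight--Pillay--Steinhorn fact to conclude Borelness (exactly as in the proof of \cref{fact:Tarsk-Seidenberg}). Your write-up just makes the $\olR$-valued bookkeeping and the induction explicit, which is additional detail rather than a different route.
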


By Wilkie's theorem \cite{Wilkie1996}, the structure $(\Rb,+, \cdot,e^x)$ is o-minimal, so \cref{cor:o-min-meas} applies to all functions built out of the basic arithmetic operations, $e^x$, $\log x$, and $|x|$, such as $\frac{\log |x|+e^{(y^2-z)/e^x}}{e^{y}+z^3}$.

Not all analytic functions are definable in o-minimal structures. In fact, even $\sin(x)$ is `wild' from this perspective.

\begin{fact}\label{fact:sine-sin}
  A function $f: \Rb^n \to \Rb$ is first-order definable (with parameters) in $(\Rb,+,\cdot,\sin(x))$ if and only if it is $\bSigma^1_n$ for some $n$. 
\end{fact}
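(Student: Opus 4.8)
The plan is to prove the two implications separately, the ``only if'' direction first since it is routine. Each atomic formula of the language $\{+,\cdot,\sin\}$ defines a closed (hence Borel, hence $\bDelta^1_1$ by \cref{fact:Suslin}) subset of the relevant $\Rb^k$, because a term in this language denotes a continuous function and so the set where two terms agree is closed, as are the diagonal and the order. By \cref{fact:Bool} the Borel sets are closed under finite Boolean combinations, and a quantifier $\exists v \in \Rb$ (resp.\ $\forall v \in \Rb$) applied to a set corresponds exactly to a projection (resp.\ co-projection) along an $\Rb$-coordinate. Since a first-order formula has only finitely many quantifier blocks, iterating these operations starting from Borel sets lands in $\bSigma^1_m$ for some finite $m$ bounded in terms of the number of quantifier alternations; allowing real parameters merely fixes coordinates and costs nothing. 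Applying this to the defining formula of the graph $\{\langle\xbar,y\rangle : f(\xbar) = y\}$ and unwinding \cref{defn:pointclass-fun} shows that any definable $f$ is a $\bSigma^1_m$ function for some $m$.

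For the ``if'' direction, the crucial observation is that $(\Rb,+,\cdot,\sin)$ defines $\Zb$: the number $\pi$ is the least positive zero of $\sin$, hence $\emptyset$-definable, so $\Zb = \{x : \sin(\pi x) = 0\}$ is definable. With $\Zb$ in hand inside the real field the ordered semiring $(\Nb,+,\cdot,<)$ is definable, and via the usual G\"odel $\beta$-function coding of finite sequences one can define in it the graph of $n \mapsto 2^n$, and thence the floor function $\lfloor\cdot\rfloor$ (the unique integer $k$ with $k \leq x < k+1$) and the relation ``the $n$-th binary digit of $x$ is $1$''. This lets a real number $r$ serve as a uniform, definable code for a subset of $\Nb$, equivalently for an element of $\Nb^\Nb$. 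Now take an arbitrary $\bSigma^1_m$ set $A \subseteq \Rb^k$ in the standard prenex normal form $A = \{\xbar : \exists\alpha_1 \in \Nb^\Nb\,\forall\alpha_2 \in \Nb^\Nb \cdots Q_m\alpha_m\, F(\xbar,\bar\alpha)\}$, where $F$ is open or closed (depending on the parity of $m$) and therefore arithmetical relative to a real parameter coding it. Translating each function quantifier $\exists\alpha_i \in \Nb^\Nb$ into a genuine real quantifier $\exists r_i \in \Rb$ together with ``$r_i$ codes $\alpha_i$,'' and the matrix $F$ into a first-order formula over the definable copy of arithmetic with the $r_i$ and $\xbar$ as parameters, exhibits $A$ as definable in $(\Rb,+,\cdot,\sin)$; specializing $A$ to the graph of a $\bSigma^1_m$ function (and absorbing the open condition $y \in N(Y,n)$) finishes the proof.

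The main obstacle is the uniform interpretation of second-order arithmetic in $(\Rb,+,\cdot,\sin)$: pinning down a single definable scheme that simultaneously identifies reals with elements of $\Nb^\Nb$, makes every arithmetical relation on those codes definable, and correctly bootstraps exponentiation through finite-sequence coding. No single ingredient is deep, but assembling them into one clean definable framework and checking that the induced translation of the projective prenex normal form is faithful on the nose is where the real bookkeeping lies. A secondary nuisance is the slight mismatch between ``$\bSigma^1_m$ function'' in the sense of \cref{defn:pointclass-fun} and ``graph is $\bSigma^1_m$,'' but this costs at most one level in $m$ and so is invisible to the ``for some $m$'' formulation.
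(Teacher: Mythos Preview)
Your proposal is correct and is precisely the standard folklore argument the paper alludes to; note that the paper does not actually give a proof of this fact, only the one-line hint that ``$\sin(x)$ allows us to define $\mathbb{Z}$ and thereby define $\mathbb{N}$,'' which is exactly the pivot of your ``if'' direction. Your sketch fleshes this out in the expected way (binary-digit extraction to code $\Nb^\Nb$ by reals, then translating the projective normal form), and your handling of the function-versus-set mismatch is adequate for the ``for some $m$'' conclusion.
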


\cref{fact:sine-sin} seems to be folklore, but a lot of the machinery needed to prove it is similar to the techniques used in this paper. The key is that $\sin(x)$ allows us to define $\mathbb{Z}$ and thereby define $\mathbb{N}$. This means that \cref{fact:sine-sin} holds for other expansions of $(\Rb,+,\cdot)$ by other seemingly tame functions such as $(\Rb,+,\cdot,\lfloor x \rfloor)$ (where $\lfloor x \rfloor$ is the floor function).

However, there is still some sense in which these familiar functions are tame in terms of quantification and measurability. This was originally studied in the context of semianalytic and subanalytic sets, but for our purposes it will be easier (and slightly more general) to phrase things in terms of o-minimality.

\begin{defn}
  A function $f : \Rb^n \to \Rb$ is \emph{locally o-minimal} if there is an o-minimal expansion of the reals in which the restriction of $f$ to $[-k,k]^n$ is definable for each $k$ (although not necessarily uniformly).
\end{defn}

By the main results of \cite{vandenDries1994}, every entire real-analytic function is locally o-minimal (including in particular $\sin(x)$). Furthermore, so are many commonly used piecewise continuous functions, such as the floor and ceiling functions and the square, sawtooth, and triangle wave functions.

This regularity assumption only really buys us one additional quantifier, which is non-trivial but only just.

\begin{prop}\label{prop:local-o-min}
  If $h : \Rb^{n + m} \to \Rb$ is locally o-minimal, then $f(\xbar) = \sup_{\ybar \in \Rb^m}h(\xbar,\ybar)$ and $g(\xbar) = \inf_{\ybar \in \Rb^m}h(\xbar,\ybar)$ are Borel and therefore universally measurable.
\end{prop}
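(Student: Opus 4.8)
The plan is to write the single unbounded supremum $\sup_{\ybar\in\Rb^m}$ as a countable \emph{increasing} supremum of suprema over compact boxes, each of which is visible to an o-minimal structure. Fix, for each $k\in\Nb$, an o-minimal expansion $\Rc_k$ of the reals in which $h_k\coloneq h\res[-k,k]^{n+m}$ is definable. Define $f_k\colon\Rb^n\to\olR$ by setting $f_k(\xbar)=\sup_{\ybar\in[-k,k]^m}h(\xbar,\ybar)$ for $\xbar\in[-k,k]^n$ and $f_k(\xbar)=0$ (an irrelevant value) for $\xbar\notin[-k,k]^n$. Using only that $h_k$ and the box $[-k,k]^m$ are definable in $\Rc_k$ together with the order, the sets $\{\langle\xbar,y\rangle:f_k(\xbar)=y\}$ and $\{\xbar:f_k(\xbar)=+\infty\}$ are definable in $\Rc_k$, and $\{\xbar:f_k(\xbar)=-\infty\}=\emptyset$; hence $f_k$ is a definable $\olR$-valued function in the sense used in the proof of \cref{fact:Tarsk-Seidenberg}. (Note that $f_k$ can genuinely attain $+\infty$: a definable function need not be continuous, so $h_k$ may be unbounded on the compact box.) By the theorem of Knight, Pillay, and Steinhorn cited above, $f_k^{-1}(I)$ is a finite union of connected submanifolds of $\Rb^n$ for every interval $I\subseteq\olR$, and such sets are Borel; so each $f_k$ is a Borel function.

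Next I would assemble $f$ from the $f_k$ locally in $\xbar$. If $\xbar\in[-j,j]^n$, then since $\bigcup_{k\ge j}[-k,k]^m=\Rb^m$ we have $f_j(\xbar)\le f_{j+1}(\xbar)\le\cdots$ and $f(\xbar)=\sup_{k\ge j}f_k(\xbar)$, so
\[
  f^{-1}((r,\infty])\cap[-j,j]^n=\bigcup_{k\ge j}\bigl(f_k^{-1}((r,\infty])\cap[-j,j]^n\bigr)
\]
is Borel for every $r\in\olR$. As $\Rb^n=\bigcup_{j\in\Nb}[-j,j]^n$, it follows that $f^{-1}((r,\infty])$ is Borel for every $r$, i.e.\ $f$ is Borel. (One cannot simply put $f=\sup_k f_k$ globally, since $f_k$ was assigned an arbitrary value off $[-k,k]^n$; the decomposition of the domain into the boxes $[-j,j]^n$ is what makes the bookkeeping go through.) The function $g$ is handled the same way with suprema replaced by infima---now $g_k(\xbar)=\inf_{\ybar\in[-k,k]^m}h(\xbar,\ybar)$ is nonincreasing in $k$ and $g(\xbar)=\inf_{k\ge j}g_k(\xbar)$ on $[-j,j]^n$---or, alternatively, one notes that $-h$ is locally o-minimal and $g(\xbar)=-\sup_{\ybar\in\Rb^m}(-h(\xbar,\ybar))$. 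Finally, Borel functions are universally measurable (\cref{lem:semi-sigma}).

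The main obstacle is exactly the interaction of the two nonuniformities permitted by the definition of local o-minimality: a single o-minimal structure need not see all of $h$, and $h$ restricted to a compact box can fail to be bounded, so the supremum may genuinely be infinite. This is why it is essential to work with the $\olR$-valued notion of definability (carrying the value $+\infty$ through the argument) and to decompose both the range of the quantified variable $\ybar$ and the domain variable $\xbar$ into countable exhaustions by compact boxes, using the \emph{monotonicity} of the approximating suprema (resp.\ infima) to recombine the Borel pieces into a single Borel function. Everything else is routine: once $f_k$ is recognized as a definable $\olR$-valued function, its Borelness is immediate from o-minimality, and the passage from the $f_k$ to $f$ is a countable-supremum-of-Borel-functions argument.
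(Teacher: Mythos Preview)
Your proposal is correct and follows essentially the same approach as the paper: write $f(\xbar)=\sup_{k\in\Nb}\sup_{\ybar\in[-k,k]^m}h(\xbar,\ybar)$, use local o-minimality to see that each inner supremum is Borel, and conclude by taking a countable supremum of Borel functions (with the infimum case handled via $-h$). Your version is more careful than the paper's one-line argument about the bookkeeping in the $\xbar$-variable---the paper simply asserts that $\xbar\mapsto\sup_{\ybar\in[-k,k]^m}h(\xbar,\ybar)$ is Borel on all of $\Rb^n$ without noting that $\Rc_k$ only sees $h$ on $[-k,k]^{n+m}$---but the underlying idea is identical.
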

\begin{proof}
  We have that $f(\xbar) = \sup_{k \in \Nb} \sup_{\ybar \in [-k,k]^m}h(\xbar,\ybar)$. By local o-minimality, $\sup_{\ybar \in [-k,k]^m}h(\xbar,\ybar)$ is Borel for each $k \in \Nb$, whereby $f(\xbar)$ is Borel. The infimum case follows from the supremum case by considering $-\sup_{\ybar \in \Rb^m}-h(\xbar,\ybar)$.
\end{proof}

Note though that in the case of real-analytic functions, \cref{prop:local-o-min} is not an improvement over \cref{prop:three-quant-insufficient}.

\subsection{Measurability from large cardinals}
\label{sec:meas-large}

It has been an ongoing theme of set-theoretic research that there is an intimate relationship between measurability of definable subsets of $\Rb$ and large cardinals. We've already seen a bit of this in \cref{fact:Shelah-inaccess}. We've also seen (in \cref{fact:Krivine-proj-meas}, for instance) that it can \emph{consistently} be the case that all reasonably definable functions on the reals are measurable, but on a psychological level it would be more comfortable if, for instance, it just were the case that no `explicitly definable' subset of $\Rb^3$ can be a paradoxical decomposition of $S^2$. The impossibility of such things follows from certain beyond-\ZFC\ set-theoretic principles that are commonly considered, such as various large cardinal axioms and \emph{projective determinacy} (a restricted form of the more famous \emph{axiom of determinacy} (\AD) which, unlike \AD, is consistent with the axiom of choice). We don't have enough space in this paper to discuss projective determinacy other than to mention that, like some large cardinal axioms, it has found applications outside of set theory and descriptive set theory, such as in recent work by Avil\'es, Rosendal, Taylor, and Tradacete resolving certain problems in Banach space theory under the assumption of projective determinacy \cite{ARTT2024}.

\begin{defn}
  An uncountable cardinal $\kappa$ is \emph{measurable} if there is a $\kappa$-additive $\{0,1\}$-valued measure $\mu$ on the $\sigma$-algebra of all subsets of $\kappa$ such that $\mu(\kappa)=1$ (where \emph{$\kappa$-additivity} in this context means that for any family $(A_i)_{i \in I}$ of subsets of $\kappa$ with $|I| < \kappa$, $\mu\left( \bigcup_{i \in I}A_i \right) = 1$ if and only if there is an $i \in I$ such that $\mu(A_i) = 1$).
\end{defn}

Measurable cardinals are inaccessible, but this is a bit like saying that the core of the sun is warm. 
To give a little perspective, the number of distinct inaccessible cardinals less than a measurable cardinal $\kappa$ is $\kappa$. To give a little bit more perspective, the number of cardinals less than a measurable $\kappa$ with this property is also $\kappa$. Moreover, the number of cardinals $\lambda$ less than $\kappa$ satisfying that ``the number of cardinals $\gamma$ less than $\lambda$ satisfying that `the number of inaccessible cardinals less than $\gamma$ is $\gamma$' is $\lambda$'' is $\kappa$. Furthermore, none of the obvious (even transfinite) iterations of this idea come close to actually characterizing measurability. If $\kappa$ is measurable, then there is a $\kappa$-additive measure $\mu$ on $2^\kappa$ such that
\[
  \mu(\{\alpha < \kappa : \alpha~\text{is an inaccessible cardinal}\}) = 1, 
\]
so there is some sense in which you can `randomly' pick an ordinal less than $\kappa$ and almost surely find an inaccessible cardinal. 


Measurable cardinals were defined by Ulam in the investigation of the question of which sets can admit countably additive measures on their full power sets (a generalization of the measure problem of Lebesgue),\footnote{In particular, the existence of a measurable cardinal follows from the assumption that there is a non-trivial countably additive $\{0,1\}$-valued measure on the full power set of some infinite set.} but in some sense the fact that measurable cardinals have implications for measurability in $\Rb$ in particular is a historical coincidence.

\begin{fact}[{Solovay \cite[Thm.~14.3]{Kanamori2003}}]\label{fact:meas-meas}
  If there is a measurable cardinal, then every $\bSigma^1_2$ set of reals is universally measurable.
\end{fact}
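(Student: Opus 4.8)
The plan is to route through the existence of \emph{sharps}. The argument splits into three links: (1) a measurable cardinal implies that $a^\#$ exists for every real $a$; (2) the existence of $a^\#$ implies that $\Rb \cap L[a]$ is countable; and (3) if $\Rb \cap L[a]$ is countable for every real $a$, then every $\bSigma^1_2$ set is universally measurable. Link (3) is Solovay's analysis of $\bSigma^1_2$ regularity via random real forcing, and it is where the substance lies; links (1) and (2) are soft once one has the basic theory of $0^\#$. The details are carried out in \cite[\S 14]{Kanamori2003}.

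For link (1), let $\mu$ be a $\kappa$-complete nonprincipal ultrafilter witnessing the measurability of $\kappa$ and let $j : V \to M \cong \mathrm{Ult}(V,\mu)$ be the induced elementary embedding, with critical point $\kappa$. For any real $a$ we have $a \in V_{\omega+2} \subseteq V_\kappa$, so $j(a) = a$, and since $M$ is transitive and contains all the ordinals, $L[a]^M = L[a]^V$; hence $j$ restricts to a nontrivial (because $j(\kappa) > \kappa$) elementary embedding $L[a] \to L[a]$, which by Kunen's theorem is equivalent to the existence of $a^\#$. For link (2), recall that the existence of $a^\#$ makes every uncountable cardinal of $V$ a Silver indiscernible for $L[a]$, hence inaccessible there; in particular $\aleph_1^V$ is a limit cardinal of $L[a]$ and so lies strictly above the successor cardinal $\aleph_1^{L[a]}$. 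Since $L[a]$ satisfies the continuum hypothesis, inside $L[a]$ there is a surjection $\aleph_1^{L[a]} \to \Rb \cap L[a]$; this surjection also belongs to $V$, so $\Rb \cap L[a]$ has size at most $\aleph_1^{L[a]} < \aleph_1^V$ in $V$, i.e.\ it is countable.

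For link (3), fix a $\sigma$-finite Borel measure $\nu$ on $\Rb$. Treating the purely atomic part of $\nu$ separately---its support is countable, so there every set lies in the completion---we may assume $\nu$ is atomless, and since $\nu$ is then determined by countably much numerical data it is coded by a real $c_\nu$. Fix a $\bSigma^1_2$ set $A$ with real parameter $a$, and put $b = \langle a, c_\nu \rangle$, so $\Rb \cap L[b]$ is countable by link (2). By Shoenfield's tree representation, $A = p[T]$ with $T \in L[b]$. There are only countably many Borel codes in $L[b]$, hence only countably many $\nu$-null Borel sets coded there; let $N$ be their union, a $\nu$-null set, so that every $x \notin N$ is $\nu$-random over $L[b]$. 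The crux is that for such $x$ the membership $x \in p[T]$ is governed by the $\nu$-random algebra of $L[b]$: by Shoenfield absoluteness between $L[b][x]$ and $V$ one has $x \in p[T] \iff L[b][x] \models x \in p[T]$, and whether a condition of positive measure in the random algebra of $L[b]$ forces $\check x \in p[\check T]$ is a Borel property of $x$. Consequently $A$ agrees, off the $\nu$-null set $N$, with a Borel set, so $A$ lies in the completion of $\nu$; as $\nu$ was arbitrary, $A$ is universally measurable. I expect the main obstacle to be precisely this Borel-in-$x$ forcing computation together with the verification that the resulting Borel set differs from $A$ only on $N$: this is exactly the step where the combinatorics of random forcing enter (Cohen forcing in its place would instead yield the Baire property for $\bSigma^1_2$), and it is the technical heart of Solovay's method.
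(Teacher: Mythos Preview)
The paper does not prove this statement at all; it is recorded as a \emph{Fact} with a bare citation to \cite[Thm.~14.3]{Kanamori2003} and no accompanying proof. Your outline is correct and is essentially the argument given in the cited reference: measurable $\Rightarrow$ $a^\#$ exists for every real $a$ $\Rightarrow$ $\Rb\cap L[a]$ is countable $\Rightarrow$ (via Shoenfield trees and Solovay's random-real analysis) every $\bSigma^1_2$ set is universally measurable. There is nothing in the paper itself to compare against.
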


\cref{fact:meas-meas} is known to not be sharp (as in weaker large cardinal hypotheses, such as the existence of a Ramsey cardinal, suffice), but measurable cardinals show up in some surprising places in mathematics. The existence of a measurable cardinal is equivalent to the existence of a discrete topological space that is not realcompact (i.e., isomorphic to a closed subspace of $\Rb^I$ for any $I$) and to the existence of an exact functor $F : \Set \to \Set$ that is not naturally isomorphic to the identity (as discovered independently by Trnkov\'a \cite{Trnkova1971} and then later Blass \cite{blass1976exact}). %
It was shown by Prze\'zdziecki in \cite{AdamPrzezdziecki2006} that the \emph{non-}existence of a measurable cardinal is equivalent to the statement that every group is the fundamental group of a compact Hausdorff space. Directly concatenating this with \cref{fact:meas-meas} gives the somewhat absurd result that the existence of a group that is not the fundamental group of any compact Hausdorff space (non-vacuously) implies that the projection of the complement of the projection of any Borel subset of $\Rb^3$ is Lebesgue measurable.
In the context of this paper more specifically, the functions in Theorems~\ref{thm:polynomial-Vitali} and \ref{thm:Polynomial-Banach-Tarski} are measurable if there is a measurable cardinal. A corollary of this fact (which is much easier to prove directly and was originally shown by Scott in \cite{SCOTT1961}) is that there cannot be a measurable cardinal in $L$; it is `too thin' to accommodate such an object.\footnote{Note though that if there is a measurable cardinal $\kappa$, then the literal ordinal $\kappa$ will still exist in $L$ and will be a (fairly large inaccessible) cardinal there. It just won't be a measurable cardinal in the sense of having a countably additive $\{0,1\}$-valued measure on its power set.} There is however a modified construction, $L[U]$, which has many similar properties to $L$ and can accommodate a measurable cardinal. Silver showed in \cite{Silver1971Meas} that there is a $\lDelta^1_3$ well-ordering of $\Rb$ in $L[U]$,\footnote{See also \cite[Thm.~20.18]{Kanamori2003} for a modern proof of this in terms of \emph{iterable pre-mice}, which are an important tool in studying the fine structure of more sophisticated inner models like $L[U]$.} so we get analogs of Theorems~\ref{thm:polynomial-Vitali} and \ref{thm:Polynomial-Banach-Tarski} for the theory $\ZFC + \text{``}$there exists a measurable cardinal'' at the cost of adding one additional supremum or infimum at the beginning of the expression defining the relevant indicator functions.

There are stronger large cardinal assumptions which push the automatic measurability of simply defined functions up the projective hierarchy. The most precise version of these involve the concept of a \emph{Woodin cardinal},\footnote{Woodin cardinals are not usually defined this way, but Woodinness of $\kappa$ is equivalent to $H_\kappa$ satisfying the \emph{weak Vop\v{e}nka's principle} \cite[Def.~6.21]{Adamek1994}, which says that $\Ord^{\mathrm{op}}$ cannot be fully embedded into any locally presentable category \cite{Wilson2021}. In other words, $\kappa$ is a Woodin cardinal if and only if $\kappa^{\mathrm{op}}$ (as a posetal category) cannot be fully embedded into any locally $\kappa$-presentable category of size $\kappa$.} but the required assumption is much weaker than \emph{Vop\v{e}nka's principle}, which is more well known and has already found applications outside of logic. As discussed in \cite[Ch.~6]{Adamek1994}, Vop\v{e}nka's principle is equivalent to many nice statements in the theory of locally presentable categories. For instance, it is equivalent to the statement that a category $C$ is locally presentable if and only if it is co-complete and has a small full subcategory $D$ such that every object of $C$ is a colimit of a diagram in $D$.

\begin{fact}[{Shelah, Woodin \cite{Shelah1990}}] $ $\label{fact:Woodin-meas}
  \begin{enumerate}
  \item For any $n \in \Nb$, if there is a measurable cardinal that is larger than $n$ Woodin cardinals, then every $\bSigma^1_{n+2}$ set of reals is universally measurable.
  \item\label{second-meas} If there is a measurable cardinal that is larger than infinitely many Woodin cardinals, then every set of reals in $L(\Rb)$ is universally measurable. In particular, this follows from Vop\v{e}nka's principle. %
  \end{enumerate}
\end{fact}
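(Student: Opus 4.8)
The plan is to route everything through the theory of \emph{homogeneously Suslin} sets and the Martin--Steel tree transfer across a Woodin cardinal --- the same machinery that yields projective determinacy from Woodin cardinals. Recall that $A\subseteq\Rb$ is $\kappa$-homogeneously Suslin if $A=p[T]$ for a tree $T$ on $\omega\times\kappa$ carrying a coherent system of countably complete, $\kappa$-complete measures whose direct limit along a given real is wellfounded precisely when that real lies in $A$, and that $A$ is $\kappa$-weakly homogeneously Suslin if it is the projection onto the first coordinate of a $\kappa$-homogeneously Suslin subset of $\Rb\times\Rb$. The two facts I would take off the shelf are: (i) \emph{(Martin)} if $\kappa$ is measurable then every $\bPi^{1}_{1}$ set is $\kappa$-homogeneously Suslin, with the measures arising as pushforwards of a normal ultrafilter on $\kappa$ along the Shoenfield tree; and (ii) \emph{(Martin--Steel)} if $\delta$ is a Woodin cardinal and $A$ is weakly homogeneously Suslin with $\delta^{+}$-complete homogeneity measures, then $\Rb\setminus A$ is ${<}\delta$-homogeneously Suslin. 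Since a homogeneously Suslin set is trivially weakly homogeneously Suslin, and since the weakly homogeneously Suslin sets are closed under the real projection $\exists^{\Rb}$, an induction that alternates (ii) with $\exists^{\Rb}$ down the $n$ Woodin cardinals $\delta_{1}>\cdots>\delta_{n}$ sitting below the measurable $\kappa$ shows, starting from (i), that every $\bPi^{1}_{n+1}$ set is homogeneously Suslin and hence that every $\bSigma^{1}_{n+2}=\exists^{\Rb}\,\bPi^{1}_{n+1}$ set is weakly homogeneously Suslin.

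It then remains to turn weak homogeneity into universal measurability, and for this I would quote the Feng--Magidor--Woodin theorem that a weakly homogeneously Suslin set is \emph{universally Baire}: it admits tree representations $p[T]=A$ and $p[S]=\Rb\setminus A$ that still complement one another after any set forcing. Given an arbitrary $\sigma$-finite Borel measure $\mu$ on a Polish space, pass to the measure-algebra forcing $\mathbb{B}_{\mu}$ and run the standard Solovay-style random-real argument: by absoluteness of the tree representations, the set of conditions forcing the $\mu$-generic point into $p[\dot T]$ is coded by a Borel set whose symmetric difference with $A$ is $\mu$-null, so $A$ is $\mu$-measurable. As $\mu$ was arbitrary, $A$ is universally measurable in the sense of the definition above. (By the measure isomorphism theorem one could instead reduce to Lebesgue measure on $[0,1]$ plus a trivial treatment of atoms, but the argument is already uniform.) This establishes the first clause.

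For the second clause the governing input is Woodin's theorem that a measurable cardinal lying above infinitely many Woodin cardinals implies $\mathsf{AD}^{L(\Rb)}$, together with the stronger conclusion that in $V$ \emph{every} set of reals belonging to $L(\Rb)$ is weakly homogeneously Suslin, equivalently universally Baire; feeding this into the same random-real argument delivers universal measurability for every set of reals in $L(\Rb)$. To obtain the parenthetical claim I would argue that Vop\v{e}nka's principle implies the existence of a proper class of extendible, hence supercompact, cardinals, that every supercompact cardinal is measurable and has cofinally many Woodin cardinals below it, and therefore that any supercompact cardinal is a measurable cardinal lying above infinitely many Woodin cardinals, so that the hypothesis of the second clause is satisfied.

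The main obstacle --- the only genuinely deep ingredient, as opposed to bookkeeping --- is fact (ii), the Martin--Steel homogeneity transfer across a Woodin cardinal: its proof is essentially the core of ``A proof of projective determinacy'' and requires the system of extenders derived from a Woodin cardinal together with the construction and wellfoundedness analysis of the Martin--Solovay tree. The $L(\Rb)$ refinement used in the second clause is of comparable depth, resting on Woodin's derived model machinery. Everything downstream of these --- the inductive climb through the projective hierarchy and the passage from weak homogeneity through universal Baireness to universal measurability --- is by comparison routine, and is where I would expect to spend the least effort.
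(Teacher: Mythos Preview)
Your proposal is correct. The paper's own proof, however, is not really a proof at all: the statement is labeled a \emph{Fact} and the paper simply cites \cite[Thm.~32.9]{Kanamori2003} for parts (1) and (2) and \cite[Thm.~20.24, Lem.~20.25]{Jech2003} together with \cite{Shelah1990} for the deduction from Vop\v{e}nka's principle (via the existence of a supercompact cardinal). Your sketch via homogeneously Suslin sets, the Martin--Steel transfer across a Woodin cardinal, and universal Baireness is one of the standard modern routes to these results and is in the same spirit as what the Kanamori citation unpacks to; note though that the original Shelah--Woodin argument proceeded instead through Woodin's stationary tower forcing rather than Martin--Steel iteration trees. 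For the Vop\v{e}nka clause, the paper goes directly to a supercompact (via Jech) rather than through extendibles as you do, but either works. One small caution: for part (2), the assertion that every set of reals in $L(\Rb)$ is fully universally Baire may require more than just $\omega$ Woodins with a measurable above, depending on what degree of universal Baireness you mean; but since measure-algebra forcings have size at most $2^{\aleph_0}$, the level of tree-absoluteness you actually obtain is more than sufficient for universal measurability, so the conclusion is unaffected. Alternatively one can bypass universal Baireness entirely and deduce universal measurability directly from $\mathsf{AD}^{L(\Rb)}$ plus absoluteness of Borel codes between $L(\Rb)$ and $V$.
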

\begin{proof}
  By \cite[Thm.~20.24, Lem.~20.25]{Jech2003}, Vop\v{e}nka's principle implies the existence of a supercompact cardinal, which is known to imply the assumption of \ref{second-meas} (as discussed in \cite{Shelah1990}). The rest follows from \cite[Thm.~32.9]{Kanamori2003}.
\end{proof}

  \begin{figure}
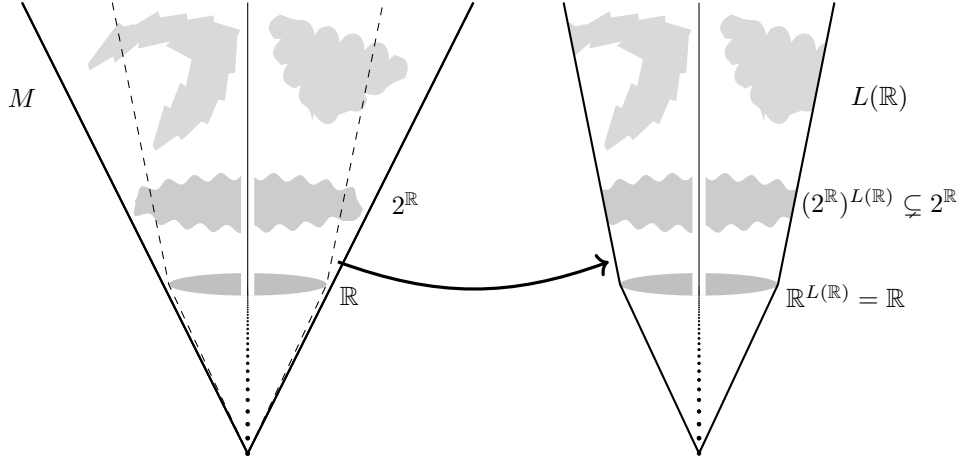

    \centering
    \LRFigure
    \caption{$L(\Rb)$ is the smallest inner model containing all of $\Rb$.}%
  \label{fig:LR}
\end{figure}
  
$L(\Rb)$ is the smallest inner model containing all of the real numbers (see Figure~\ref{fig:LR}). Unlike $L$ though, it doesn't always satisfy the axiom of choice (otherwise it would always have non-measurable sets). I would argue that $L(\Rb)$ encompasses (and probably over-approximates) most mathematicians' intuitive sense of what it means to `define something without the axiom of choice' or in other words to `actually write something down.' Any set of reals or real-valued function that one can prove the existence of without choice will live inside $L(\Rb)$, since it is a model of \ZF. So we have that if Vop\v{e}nka's principle holds, no analogs of Theorems~\ref{thm:polynomial-Vitali} and \ref{thm:Polynomial-Banach-Tarski} are possible. There can be no `explicit' description of a Vitali set or a Banach--Tarski decomposition of the sphere in the presence of sufficiently strong large cardinals.

\subsection{Table of (non-)measurability results}

\cref{tab:meas-tab} summarizes some of the various measurability results we've seen in this section and compares them to the non-measurability results from \cref{sec:construction} (and a couple we'll discuss in a moment). In this table, $\qqq$ represents an instance of either $\inf$ or $\sup$. The asterisks are Kleene stars, so in particular $\sup^\ast_X$ represents an arbitrarily long finite string of suprema over $X$ and $\qqq^\ast_X$ represents an arbitrarily long finite string of (possibly alternating) suprema and infima. The expression $[\sup_\Rb,\qqq_\Nb]^\ast$ represents an arbitrarily long finite string of $\sup_\Rb$'s, $\sup_\Nb$'s, and $\inf_\Nb$'s (again, possibly alternating).  Of course, all of these statements entail the analogous inverted statement, with suprema and infima (and lower and upper) switched, as all of the definability classes mentioned are closed under negation (except lower semi-continuity). Finally, $\mathrm{Con}(\ZFC)$ is the statement that $\ZFC$ is consistent.

Of course the table is not comprehensive. For instance, by Facts~\ref{fact:Bool} and \ref{fact:meas-meas}, the presence of a measurable cardinal extends the seventh entry of the table to $[\inf_{\Rb},\qqq_\Nb]^\ast\allowbreak[\sup_{\Rb},\qqq_{\Nb}]^\ast\allowbreak\inf_{\Rb,\Nb}^\ast\sup_{\Rb,\Nb}^\ast g$ for continuous $g$ (with analogous extensions for the eighth and ninth entries and in the presence of stronger large cardinal assumptions). There are also more independence results as seen in the third and fourth entries. %
For example, it is possible (although surprisingly tedious\footnote{\textbf{Exercise 1.}  Show that for any open set $U \subseteq \Rb^{n}$, there is an entire real-analytic function $g(\xbar,y)$ such that \(\mathbf{1}_U(\xbar) =  \sup_{y \in \Rb}g(\xbar,y) \) for all $\xbar \in \Rb^n$, where $\mathbf{1}_U(\xbar)$ is the indicator function of $U$.  For extra credit, argue informally that $g(\xbar,y)$ can be computable if $U$ is semicomputable.

 \textbf{Exercise 2.} Show that for any open set $U_0 \subseteq \Rb^k \times \Nb$, there is an open set $U \subseteq \Rb^{k+1}$ such that for any $\xbar \in \Rb^k$, $\langle \xbar,n \rangle \in U_0$ for all $n \in \Nb$ if and only if $\langle \xbar,y \rangle \in U$ for all $y \in \Rb$.

\textbf{Exercise 3.} Use Exercises~1 and 2 to show that for any $G_\delta$ set $A \subseteq \Rb^\ell$, there is a real-analytic function $g(\xbar,y,z)$ such that
  \(
    \mathbf{1}_A(\xbar) = \inf_{y \in \Rb}\sup_{z \in \Rb}g(\xbar,y,z)
  \)
  for all $\xbar \in \Rb^\ell$. Use this and earlier results in this paper to justify the third entry of \cref{tab:meas-tab}.}) to produce a computable total real-analytic function $g(x,y,z,w,t)$ such that it is independent of \ZFC\ whether $f(x) = \inf_{y \in \Rb}\sup_{z \in \Rb}\inf_{w \in \Rb}\sup_{t \in \Rb}g(x,y,z,w,t)$ is Lebesgue measurable, which shows that local o-minimality really doesn't allow us to improve \cref{cor:three-quant-insufficient-1} in general.

  \begin{table}[]
    \begin{adjustbox}{max width=\textwidth}
    \begin{tabular}{rll}
      Function definition & Definability of $g$ & Measurability \\ \hline \hline
      $\inf_{\Rb}\sup_{\Rb}\inf_{\Nb}\sup_{\Nb}^\ast g$                  & Polynomial over $\Zb$        & Independent (\ref{thm:polynomial-Vitali})      \\    %
      $\sup_{\Rb}\inf_{\Rb}\sup_{\Rb}\inf_{\Nb}\sup_{\Nb}^\ast g$                  & Polynomial over $\Rb$        & Implies $\mathrm{Con}(\ZFC)$ (\ref{thm:polynomial-inacc})      \\ %
      $\inf_{\Rb}\sup_{\Rb}\inf_{\Rb}\sup_{\Rb} g$                  & Real-analytic        &   Independent \\ %
      $\inf_{\Rb}\sup_{\Rb}\inf_{\Rb}^\ast\sup_{\Rb}^\ast\inf_{\Rb} g$                  & Trig.~polynomial    & Independent  \\ 
      $\inf^\ast_{\Rb}\sup^\ast_{\Rb}\inf_{\Rb}^\ast\sup_{\Rb}^\ast g$ & Trig.~polynomial & ????? (\ref{quest:trig}) \\ \hline

      $[\sup_{\Rb},\qqq_{\Nb}]^\ast g$                  & Borel        & Measurable (\ref{lem:semi-sigma})      \\ %
      $[\sup_{\Rb},\qqq_{\Nb}]^\ast\inf_{\Rb,\Nb}^\ast\sup_{\Rb,\Nb}^\ast g$                  & Continuous        & Measurable (\ref{lem:semi-sigma}, \ref{cor:three-quant-insufficient-1})      \\ %
      $[\sup_{\Rb},\qqq_{\Nb}]^\ast\qqq_{\Rb}^\ast g$                  &  O-minimal: Rational,         & Measurable (\ref{lem:semi-sigma}, \ref{cor:o-min-meas})      \\ %
                         &  Exponential, Step, etc.        &       \\  %
                          &  (e.g., $e^{x^2/|y+\mathbf{1}_{[3,4]}(\log z)|}$)        &       \\ %
     $[\sup_{\Rb},\qqq_{\Nb}]^\ast\inf_{\Rb}^\ast g$  & Locally o-minimal: & Measurable (\ref{lem:semi-sigma}, \ref{prop:local-o-min}) \\ %
        & Real-analytic, Floor, etc. &  \\  
                          & (e.g., $\lceil x^2\cos(e^{y\lfloor z \rfloor})\rceil$) & \\ \hline   %
$[\inf_{\Rb},\qqq_{\Nb}]^\ast[\sup_{\Rb},\qqq_{\Nb}]^\ast g$ & Borel &  Measurable cardinal (\ref{fact:meas-meas}) \\
      $\qqq_{\Rb,\Nb}^\ast g$ & $L(\Rb)$ & Vop\v{e}nka's principle (\ref{fact:Woodin-meas})
    \end{tabular}
  \end{adjustbox}
  \caption{Measurability of various classes of definable functions.}
    \label{tab:meas-tab}
  \end{table}

Aside from \cref{quest:degree}, the only question I wasn't able to resolve to my own satisfaction was the case of trigonometric polynomials. It is fairly straightforward to show that for any function $g(x_1,\dots,x_n,y,z_1,\dots,z_k)$, 
   \[
     \inf_{y \in \Zb}\sup_{\zbar \in \Zb^k} g(\xbar,y,\zbar) = \inf_{y \in \Rb}\sup_{\beta \in \Rb,\zbar \in \Rb^{k}}\inf_{\gamma \in \Rb} \left[  g(\xbar,y,\zbar) + \beta^2\sin^2(\pi y)   - \gamma^2\sum_{i = 1}^{k} \sin^2(\pi z_i) \right]
   \]
   for any $\xbar \in \Rb^n$. Using a standard argument involving Lagrange's four-square theorem (which allows us to convert quantification over $\Nb$ into quantification over $\Zb$ at the cost of nearly quadrupling the number of variables), this allows us to modify the proof of \cref{thm:polynomial-Vitali} to show that there is a function of the form  $f(x) = \inf_{y \in \Rb}\sup_{ z \in \Rb}\inf_{\nbar \in \Rb^4}\sup_{\beta \in \Rb,\kbar \in \Rb^{280}}\inf_{\gamma \in \Rb} g$ for a trigonometric polynomial $g$ such that whether $f(x)$ is measurable is independent of \ZFC. The fact that something like this should be possible follows from \cref{fact:sine-sin}, but it is unclear if the extra alternation of quantifiers is necessary.

   \begin{quest}\label{quest:trig}
     Does \ZFC\ prove that for every trigonometric polynomial $g$, any function of the form $\inf_\Rb^\ast\sup_\Rb^\ast \inf_\Rb^\ast \sup_\Rb^\ast g$ is Lebesgue measurable?
   \end{quest}

   A positive answer to this would be interesting since it isn't implied by the general statements \ref{lem:semi-sigma}, \ref{cor:three-quant-insufficient-1}, \ref{cor:o-min-meas}, and \ref{prop:local-o-min}.

%


\appendix

\section{An explicit paradoxical decomposition of the sphere}
\label{sec:explicit-BT}

In essentially the same manner as the proof of \cref{prop:Vitali-V-L}, we can build a low-complexity paradoxical decomposition of $S^2$ assuming $\VV = \LL$. The commonly presented proof (such as the proof in \cite{tomkowicz2016banach}, which we will roughly follow) explicitly gives two rotation matrices $\rho$ and $\phi$ which generate a free group $F \subseteq SO(3)$. This is then used to build a paradoxical decomposition of $S^2 \setminus D$, where $D$ is the set of points on the sphere fixed by some element of $F$. Then an uncountability argument is used to find a third rotation $\alpha$ satisfying that the sets $(\alpha^n\cdot D)_{n \in \Nb}$ are pairwise disjoint. While this kind of uncountability argument can be done in a computable way, we can also just be slightly more careful with our choice of rotation matrices, which fits this paper's  general theme of explicitness.

Fix the following two rotation matrices:
\[
  \rho = \frac{1}{5}\begin{pmatrix} 3 & 4 & 0 \\ -4 & 3 & 0 \\ 0 & 0 & 5\end{pmatrix}, \quad\quad
  \phi = \frac{1}{5}\begin{pmatrix} 5 & 0 & 0 \\ 0 & 3 & 4 \\ 0 & -4 & 3 \end{pmatrix}.
\]
These generate a free group $F \subseteq SO(3)$. This can be seen by analyzing the  $\mathrm{mod}\ 5$ behavior of the numerators of entries of products of $\rho$ and $\phi$ or by analyzing the dynamics of the matrices in the $5$-adic numbers, as is done in \cite{Speyer2007}. Let $D$ be the set of points in $S^2$ fixed by some element of $F$. Note that since every product of $\rho$ and $\phi$ (and their inverses) is a rational matrix, we have that every element of $D$ is the normalization of a vector with rational coordinates. Therefore for any $\xbar,\ybar \in D$, there is a quartic (i.e., degree $4$) field extension $K$ of $\Qb$ such that the coordinates of $\xbar$ and $\ybar$ are all elements of $K$.

Now let $\alpha$ be the rotation matrix corresponding to the unit quaternion \(\frac{1+i+j\sqrt[3]{2}+k\sqrt[5]{2}}{\sqrt{2+\sqrt[3]{4}+\sqrt[5]{4}}}\). In other words, $\alpha$ represents a rotation of $2 \arccos\left(\frac{1}{\sqrt{2+\sqrt[3]{4}+\sqrt[5]{4}}}  \right)$ radians about the axis $\langle 1,\sqrt[3]{2},\sqrt[5]{2} \rangle$. By a short field-theoretic argument,\footnote{If $K$ is a quartic field extension of $\Qb$, then $K(\sqrt[3]{2})$ cannot contain $\sqrt[5]{2}$ by the multiplicativity of degrees of field extensions.
} the set $\{1,\sqrt[3]{2},\sqrt[5]{2}\}$ is linearly independent over any quartic field extension $K$ of $\Qb$. In particular, this implies that for any distinct $\xbar,\ybar \in D$, we have that the inner products of $\xbar$ and $\langle 1,\sqrt[3]{2},\sqrt[5]{2} \rangle$ and of $\ybar$ and $\langle 1,\sqrt[3]{2},\sqrt[5]{2} \rangle$ are not equal. Since these inner products are preserved by $\alpha$, we have that $\xbar \neq \alpha^n \cdot \ybar$ for any $n$. Moreover, $2 \arccos\left(\frac{1}{\sqrt{2+\sqrt[3]{4}+\sqrt[5]{4}}}  \right)$ is not a rational multiple of $\pi$ by another short field-theoretic argument.\footnote{If $\arccos\left(\frac{1}{\sqrt{2+\sqrt[3]{4}+\sqrt[5]{4}}}  \right)$ is a rational multiple of $\pi$, then $K = \Qb(\sqrt{2+\sqrt[3]{4}+\sqrt[5]{4}})$ is a subfield of a cyclotomic extension of $\Qb$, but $K$ has $\Qb(\sqrt[3]{4}+\sqrt[5]{4})$ as a subfield and the minimal polynomial of $\sqrt[3]{4}+\sqrt[5]{4}$ is $x^{15} - 20 x^{12} - 12 x^{10} + 160 x^9 - 1440 x^7 - 640 x^6 + 48 x^5 - 8640 x^4 + 1280 x^3 - 1920 x^2 - 3840 x - 1088$, so $\Qb(\sqrt[3]{4}+\sqrt[5]{4})$ has the non-abelian Galois group $S_3 \times \mathrm{Aut}(D_5)$ \cite{lmfdb:15.1.7174453500000000000000.1}.
} Finally, $\langle 1,\sqrt[3]{2},\sqrt[5]{2} \rangle$ is not the fixed axis of any element of $F$, so we have that $\xbar \neq \alpha^n\cdot \xbar$ for any $n > 0$ and $\xbar \in D$. Therefore the sets $(\alpha^n\cdot D)_{n \in \Nb}$ are pairwise disjoint.

We have an effective procedure for listing the elements of $D$. Fix a computable enumeration\footnote{All we need of this enumeration of $F_2$ is that multiplication is computable and that we can compute the reduced word representation of $f(n)$ uniformly in $n$. To accomplish this it would be sufficient to list the elements of $F_2$ in lexicographic order. Alternatively, we could literally just write $n$ in base $4$ and interpret it as a word in the two generators of $F_2$ and their inverses to get $f(n)$ for $n > 0$.} $f : \Nb \to F_2$ of the free group on two generators satisfying that $f(0) = e$. Let $g : F_2 \to F$ be the isomorphism taking the two generators of $F_2$ to $\rho$ and $\phi$.

\begin{lem}\label{lem:D-Sigma-0-2}
  $D$ is a $\lSigma^0_2$ set.
\end{lem}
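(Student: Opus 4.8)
The plan is to realise $D$ as the complement of a $\lPi^0_2$ subset of $\Rb^3$, which by \cref{defn:eff-Borel} is exactly what it means to be $\lSigma^0_2$. Since a nonidentity rotation in $SO(3)$ fixes precisely its axis, $D$ is the set of points of $S^2$ fixed by \emph{some} nonidentity element of $F$, i.e.
\[
  D = \bigl\{x \in \Rb^3 : x \in S^2 \text{ and } (\exists n)\bigl(f(n)\neq e \text{ and } g(f(n))\cdot x = x\bigr)\bigr\},
\]
where $S^2 = \{x \in \Rb^3 : x_1^2 + x_2^2 + x_3^2 = 1\}$ and $e$ is the identity of $F_2$. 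Negating this and pushing the ``$x\in S^2$'' clause underneath the quantifier (a routine check: for $x\notin S^2$ the first disjunct below holds for every $n$, and for $x \in S^2$ the bracketed condition just says that $x$ is fixed by no nonidentity element), one gets
\[
  \Rb^3 \setminus D = \bigl\{x \in \Rb^3 : (\forall n)\bigl[\, x \notin S^2 \ \vee\ f(n) = e \ \vee\ \|g(f(n))\cdot x - x\|^2 > 0 \,\bigr]\bigr\}.
\]
So it suffices to show the bracketed condition defines a semicomputable set $U \subseteq \Rb^3 \times \Nb$; then $\Rb^3\setminus D = \bigcap_n \{x : \langle x,n\rangle \in U\}$ is $\lPi^0_2$ and we are done.

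To see $U$ is semicomputable, I would treat the three disjuncts separately. The clause ``$x \notin S^2$'' does not involve $n$ and is an open condition, hence $\lSigma^0_1$. The clause ``$f(n) = e$'' is decidable uniformly in $n$ (compute the reduced word of $f(n)$ and check whether it is empty), so $\{\langle x,n\rangle : f(n) = e\}$ is $\lDelta^0_1$. For the third disjunct the crucial point is that $g(f(n)) \in SO(3)$ is a rational matrix whose entries are computable uniformly in $n$: the generators $\rho$ and $\phi$ are rational, products of rational matrices are rational, and the reduced word of $f(n)$ can be read off $n$. Hence the function $h$ given by $h(x,n) = \|g(f(n))\cdot x - x\|^2$ is a polynomial in $x_1,x_2,x_3$ whose coefficients are computable uniformly in $n$, so $h$ is a computable ($\lSigma^0_1$) function $\Rb^3\times\Nb \to \Rb$, and $\{\langle x,n\rangle : h(x,n) > 0\}$ is the preimage of the semicomputable set $\{t \in \Rb : t > 0\}$ under $h$, hence semicomputable. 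As semicomputable sets are closed under finite unions, $U$ is semicomputable, which finishes the proof via \cref{defn:semicomp} and \cref{defn:eff-Borel}.

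The only load-bearing ingredient is the uniform computability of $n \mapsto g(f(n))$ — precisely what the enumeration $f$ of $F_2$ was chosen to supply (computable multiplication and computable reduced-word representatives) — together with the rationality of $\rho$ and $\phi$. I do not anticipate a genuine obstacle; the one place to be careful is the rewriting of $\Rb^3\setminus D$, making sure the ``$x\notin S^2$'' disjunct is correctly absorbed inside the $\forall n$ so that the whole matrix of the quantifier is a single semicomputable relation, with the rest being bookkeeping with the closure of the $\lSigma^0_1$ and $\lPi^0_2$ pointclasses under finite Boolean operations.
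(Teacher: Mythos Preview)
Your proof is correct. Both you and the paper show that $\Rb^3\setminus D$ is $\lPi^0_2$, but the implementations differ: you use the polynomial inequality $\|g(f(n))\cdot x - x\|^2 > 0$ directly (exploiting that the entries of $g(f(n))$ are rationals computable from $n$), whereas the paper explicitly computes the two fixed points $h_1(n),h_2(n)\in S^2$ of each nonidentity $g(f(n))$ and then exhibits the semicomputable set $\{\langle x,2n\rangle : x\neq h_1(n)\}\cup\{\langle x,2n+1\rangle : x\neq h_2(n)\}$, appealing to Tarski--Seidenberg for the decidability of membership of these algebraic points in rational boxes. Your route is a bit more economical, sidestepping the eigenvector computation and the appeal to real quantifier elimination; the paper's route, on the other hand, displays $D$ concretely as the range of a pair of computable sequences of points in $\Rb^3$, which fits the paper's overall theme of explicitness.
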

\begin{proof}
  The function $g \circ f$ is computable (in the sense that we can write a computer program that computes the elements of the entries of each matrix $g(f(n))$). Furthermore, there is a uniform procedure for computing the fixed axis of the non-trivial matrices in $F$, so we can find two computable function $h_1$ and $h_2$ that produce the coordinates of the two fixed points of $g(f(n))$ for $n > 0$. (For $n = 0$, we can just let $h_1(0) = h_1(1)$ and $h_2(0) = h_2(1)$.) %
  Membership of $h_1(n)$ and $h_2(n)$ in any rational open cube $(a_1,b_1)\times (a_2,b_2)\times (a_3,b_3)$ is decidable.\footnote{Although it's possible to show this directly, it's also true that this follows from the Tarski--Seidenberg theorem, since it gives an algorithm for deciding the truth of arbitrary first-order sentences (without parameters) in the structure $(\Rb,+,\cdot)$.} Therefore we can uniformly enumerate the set $\{k : h_1(n) \notin N(\Rb^3,k)\}$ for each $n$ (and likewise for the analogous set with $h_2$). Therefore we get that the set $U \subseteq \Rb^3 \times \Nb = \{\langle \xbar,2n \rangle : \xbar \neq h_1(n)\}\cup\{\langle \xbar,2n+1 \rangle : \xbar \neq h_2(n)\}$ is semicomputable. Finally, $D = \pi((\Rb^3 \times \Nb)\setminus U)$, so $D$ is $\lSigma^0_2$.
\end{proof}

Let $D^\ast = \{\alpha^n : n \geq 0\} \cdot D$.

\begin{cor}\label{cor:F-D}
  The set $D^\ast$ is $\lSigma^0_2$.
\end{cor}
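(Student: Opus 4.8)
The plan is to realize $D^\ast$ as a computable countable union of computably-homeomorphic copies of the $\lSigma^0_2$ set $D$ and then invoke the closure properties of the pointclass $\lSigma^0_2$ under computable countable unions and preimages by computable maps. The first step is to check that $\alpha$ is a computable matrix: via the standard quaternion-to-rotation formula, each entry of $\alpha$ is a fixed rational function with rational coefficients of the computable real numbers $\sqrt[3]{2}$, $\sqrt[5]{2}$, and $\sqrt{2+\sqrt[3]{4}+\sqrt[5]{4}}$, hence is itself a computable real number, and since $\alpha \in SO(3)$ the entries of $\alpha^{-1} = \alpha^{\mathsf{T}}$ are computable as well. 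Because multiplication of computable matrices is computable, the map $e : \Rb^3 \times \Nb \to \Rb^3$ given by $e(\xbar,k) = \alpha^{-k}\xbar$ is a $\lSigma^0_1$ function in the sense of \cref{defn:pointclass-fun}.

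Next I would note that $\xbar \in \alpha^k \cdot D$ exactly when $\alpha^{-k}\xbar \in D$, so $\alpha^k \cdot D = \{\xbar \in \Rb^3 : e(\xbar,k) \in D\}$ is the $k$-th section of $e^{-1}(D) \subseteq \Rb^3 \times \Nb$. By \cref{lem:D-Sigma-0-2}, $D$ is $\lSigma^0_2$, i.e.\ of the form $\bigcup_m C_m$ for a uniformly $\lPi^0_1$ family $(C_m)$; since preimages of semicomputable sets under $\lSigma^0_1$ functions are semicomputable (and hence preimages of $\lPi^0_1$ sets under such functions are $\lPi^0_1$), $e^{-1}(D) = \bigcup_m e^{-1}(C_m)$ is again a computable countable union of $\lPi^0_1$ sets, hence $\lSigma^0_2$. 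Substituting a fixed value $k$ is a computable operation, so each $\alpha^k \cdot D$ is $\lSigma^0_2$ uniformly in $k$, and therefore $D^\ast = \bigcup_{k \geq 0}\alpha^k \cdot D$ is a computable countable union of $\lSigma^0_2$ sets and so is $\lSigma^0_2$.

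There is no genuine obstacle here beyond bookkeeping; the one point to state with care is that the lightface Borel pointclass $\lSigma^0_2$ (unlike the projective pointclasses of \cref{fact:Bool}) is closed under computable countable unions and under preimages by $\lSigma^0_1$ functions, which is standard (see \cite{Moschovakis2009-je}). If one prefers to avoid appealing to these general facts, one can instead argue completely explicitly by re-running the construction in the proof of \cref{lem:D-Sigma-0-2} with the maps $h_1,h_2$ replaced by $\langle k,n\rangle \mapsto \alpha^k h_i(n)$: the coordinates of these points are algebraic over $\Qb$ with uniformly computable defining data, so membership of $\alpha^k h_i(n)$ in a rational open box of $\Rb^3$ remains decidable uniformly in $\langle k,n\rangle$, which yields a semicomputable $V \subseteq \Rb^3 \times \Nb$ with $D^\ast = \pi((\Rb^3 \times \Nb) \setminus V)$ and hence $D^\ast \in \lSigma^0_2$ directly.
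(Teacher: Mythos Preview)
Your argument is correct and is essentially the same as the paper's: define the computable map $(\xbar,n)\mapsto \alpha^{-n}\xbar$, pull $D$ back along it to get a $\lSigma^0_2$ subset of $\Rb^3\times\Nb$, and project out the $\Nb$ coordinate (your ``union of sections'' is exactly this projection). Your caution about the closure properties is well-placed, since in the paper \cref{fact:Bool} and \cref{fact:function-sub} are literally stated only for the projective pointclasses; the paper invokes them anyway for $\lSigma^0_2$, relying on the fact that the cited results in \cite{Moschovakis2009-je} are proved at a level of generality that covers $\lSigma^0_2$.
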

\begin{proof}
  We have that the function $h : \Rb^3 \times \Nb \to \Rb^3$ defined by $h(\xbar,n) = \alpha^{-n}\cdot \xbar$ is computable. By \cref{fact:function-sub}, 
  this implies that the preimage $h^{-1}(D) \subseteq \Rb^3 \times \Nb$ is $\lSigma^0_2$. Finally, $D^\ast =\pi(g^{-1}(D))$ (where $\pi$ is the projection from $\Rb^3 \times \Nb$ to $\Rb^3$), so by \cref{fact:Bool}, we have that $D^\ast$ is $\lSigma^0_2$.
\end{proof}

Since any $\lPi^0_1$ set is also $\lPi^0_2$, we also have the following corollary.

\begin{cor}\label{cor:S-D}
  $S^2 \setminus D$ and $S^2 \setminus D^\ast$ are $\lPi^0_2$ sets.
\end{cor}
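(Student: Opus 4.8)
The plan is to deduce the corollary from \cref{lem:D-Sigma-0-2} and \cref{cor:F-D} together with two elementary closure facts about the low levels of the effective Borel hierarchy. First I would record that $S^2 = \{\langle x,y,z\rangle \in \Rb^3 : x^2+y^2+z^2 = 1\}$ is a $\lPi^0_1$ set: the map $\langle x,y,z\rangle \mapsto x^2+y^2+z^2$ is computable (in the sense of computable analysis), and the set $\{t \in \Rb : t \neq 1\}$ is semicomputable because deciding ``$1 \notin N(\Rb,n)$'' is decidable from the explicit definition of $N(\Rb,n)$; hence the complement of $S^2$ in $\Rb^3$ is semicomputable, i.e.\ $S^2$ is $\lPi^0_1$, and therefore also $\lPi^0_2$. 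Second, by the definition of the $\lSigma^0_2$ pointclass, the complement of a $\lSigma^0_2$ set is $\lPi^0_2$; so \cref{lem:D-Sigma-0-2} gives that $\Rb^3 \setminus D$ is $\lPi^0_2$ and \cref{cor:F-D} gives that $\Rb^3 \setminus D^\ast$ is $\lPi^0_2$.

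To finish, I would write $S^2 \setminus D = S^2 \cap (\Rb^3 \setminus D)$ and $S^2 \setminus D^\ast = S^2 \cap (\Rb^3 \setminus D^\ast)$, exhibiting each as a finite intersection of $\lPi^0_2$ sets, and then invoke closure of the $\lPi^0_2$ pointclass under finite intersections. The latter reduces to the observation that semicomputable sets are closed under finite intersections: if $U_i = \bigcup_{m \in W_i} N(X,m)$ with $W_i$ computably enumerable, then $U_1 \cap U_2 = \bigcup_{m \in W_1,\, m' \in W_2} \bigl( N(X,m) \cap N(X,m') \bigr)$, and each $N(X,m)\cap N(X,m')$ can be re-expressed, uniformly and computably in $\langle m,m'\rangle$, as a union of basic neighborhoods, because inclusions and intersections among the $N(X,n)$ are computably decidable (as was arranged when these enumerations were fixed). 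Hence if $B_i = \bigcap_{n}\{x : \langle x,n\rangle \in U_i\}$ with $U_i$ semicomputable, then $B_1 \cap B_2 = \bigcap_n \{x : \langle x,n\rangle \in U_1 \cap U_2\}$ is again $\lPi^0_2$.

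There is essentially no obstacle here: the statement is a bookkeeping wrap-up of the appendix. The only point requiring the mild care described above is the closure of $\lPi^0_2$ (equivalently, of the semicomputable sets) under finite intersections; this is a standard fact in effective descriptive set theory and follows directly from the computable decidability of the basic combinatorial relations among the chosen neighborhood enumerations $N(X,n)$, so I would state it briefly rather than belabor it.
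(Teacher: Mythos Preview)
Your proposal is correct and follows the same approach as the paper: show $S^2$ is $\lPi^0_1$ (hence $\lPi^0_2$), take complements of $D$ and $D^\ast$ to get $\lPi^0_2$ sets, and intersect. The only difference is cosmetic: the paper simply cites \cref{fact:Bool} for closure under finite intersections (implicitly using that the same closure holds at the $\lPi^0_2$ level), whereas you spell this out directly for semicomputable sets; your version is a bit more self-contained but otherwise identical in substance.
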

\begin{proof}
  It is straightforward to show that $S^2$ is $\lPi^0_1$. Therefore it is $\lPi^0_2$ as well. By \cref{fact:Bool} and \cref{cor:F-D}, we get that $S^2 \setminus D$ and $S^2 \setminus D^\ast$ are $\lPi^0_2$.
\end{proof}

In a similar manner to the proofs of \cref{lem:Vitali-equiv-basic} and \cref{cor:F-D}, we get the following.

\begin{lem}\label{lem:BT-equiv}
  The equivalence relation on $S^2$ defined by $\xbar \in F \cdot \ybar$ is $\lSigma^0_2$.
\end{lem}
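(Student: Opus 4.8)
The plan is to write the equivalence relation $E \coloneq \{\langle \xbar,\ybar\rangle \in S^2\times S^2 : \xbar \in F\cdot\ybar\}$ as a projection over $\Nb$ of a $\lPi^0_1$ set, exactly as in the proof of \cref{lem:D-Sigma-0-2}. First I would recall that $g\circ f : \Nb \to F$ is a computable enumeration of $F$ and that, because every word in $\rho$, $\phi$ and their inverses is a rational matrix, the entries of $M_n \coloneq (g\circ f)(n)$ are rational numbers that can be computed (say as a reduced fraction together with an exponent bounding the denominator) uniformly in $n$; this is exactly what the choice of the enumeration $f$ in the paragraph before \cref{lem:D-Sigma-0-2} was set up to provide. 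Then I would observe that
\[
  E = \bigcup_{n \in \Nb}\bigl\{\langle \xbar,\ybar\rangle \in S^2\times S^2 : \xbar = M_n\ybar\bigr\},
\]
using that each $M_n \in SO(3)$, so $\xbar \in S^2$ together with $\xbar = M_n\ybar$ already forces $\ybar \in S^2$ (intersecting with $S^2\times S^2$ is harmless in any case).

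Next I would argue that for each $n$ the set $Z_n \coloneq \{\langle\xbar,\ybar\rangle \in \Rb^6 : \xbar = M_n\ybar\}\cap(S^2\times S^2)$ is $\lPi^0_1$ \emph{uniformly} in $n$: after clearing the common denominator of $M_n$, the set $Z_n$ is the common zero set of finitely many polynomials with integer coefficients (the three linear equations coming from $\xbar = M_n\ybar$, together with $x_1^2+x_2^2+x_3^2-1$ and $y_1^2+y_2^2+y_3^2-1$), and these integer coefficients are computable from $n$. Disjointness of a basic rational box $N(\Rb^6,k)$ from such a zero set is decidable --- for instance via the Tarski--Seidenberg theorem, as already invoked in the proof of \cref{lem:D-Sigma-0-2}, or by a direct estimate --- so the set $\{k : N(\Rb^6,k)\cap Z_n = \varnothing\}$ is computably enumerable uniformly in $n$, which witnesses that $\Rb^6\setminus Z_n$ is semicomputable uniformly in $n$. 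Consequently $U \coloneq \{\langle\xbar,\ybar,n\rangle \in \Rb^6\times\Nb : \langle\xbar,\ybar\rangle \notin Z_n\}$ is semicomputable, and since $E = \pi\bigl((\Rb^6\times\Nb)\setminus U\bigr)$ with $\pi : \Rb^6\times\Nb \to \Rb^6$ the projection, $E$ is $\lSigma^0_2$ by \cref{defn:eff-Borel} (equivalently, $\Rb^6\setminus E = \bigcap_n\{\langle\xbar,\ybar\rangle : \langle\xbar,\ybar,n\rangle \in U\}$ is $\lPi^0_2$).

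The proof is essentially bookkeeping, and the step I would expect a reader to want spelled out --- hence the closest thing to an obstacle --- is the pair of uniform computability claims: that the reduced word representing $f(n)$, and therefore the rational entries of $M_n$, can be computed uniformly in $n$, and that box-versus-algebraic-set disjointness is uniformly decidable. Both are handled identically to the corresponding steps in the proof of \cref{lem:D-Sigma-0-2} (and \cref{cor:F-D}), so in the write-up I would simply reduce to those. I do not anticipate any genuine difficulty beyond this.
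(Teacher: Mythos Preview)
Your proposal is correct and follows essentially the same approach as the paper: both express the relation as a projection over $\Nb$ of the $\lPi^0_1$ set $\{\langle\xbar,\ybar,n\rangle : \xbar = g(f(n))\cdot\ybar\}$ (intersected with $S^2\times S^2$). The only difference is packaging: the paper defines the computable map $h(\xbar,\ybar,n)=\langle\xbar,g(f(n))\cdot\ybar\rangle$ and invokes \cref{fact:function-sub} to get $h^{-1}(\Delta)$ as $\lPi^0_1$, whereas you unpack this by directly arguing uniform decidability of box-disjointness from the rational algebraic sets $Z_n$.
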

\begin{proof}
  Again using the $f$ and $g$ from the proof of \cref{lem:D-Sigma-0-2}, we have that the map $h : \Rb^3 \times \Rb^3 \times \Nb \to \Rb^3 \times \Rb^3$ defined by $h(\xbar,\ybar,n) = \langle \xbar,g(f(n))\cdot \ybar \rangle$ is computable. The set $\Delta = \{\langle \xbar,\ybar \rangle \in \Rb^3 \times \Rb^3 : \xbar = \ybar\}$ is $\lPi^0_1$, so by \cref{fact:function-sub}, 
  the preimage $h^{-1}(\Delta)$ is $\lPi^0_1$. The set we're after is then $(S^2\times S^2) \cap (\Delta \cup \pi(h^{-1}(\Delta)))$ (where $\pi$ is the projection from $\Rb^3\times \Nb$ to $\Rb^3$), which is $\lSigma^0_2$ by \cref{fact:Bool}.
\end{proof}

\begin{prop}\label{prop:BT-V-L}
  There is a semicomputable set $U \subseteq \Rb^3 \times \Rb^2 \times \Nb$ such that, assuming $\VV = \LL$, 
  \(
    \{\xbar : (\forall y \in \Rb)(\exists z \in \Rb)(\forall n \in \Nb)\langle \xbar,y,z,n \rangle \in U\}
  \)
  is a $\lPi^1_2$ transversal of the equivalence relation $x \in F \cdot y$ on $S^2 \setminus D$.
\end{prop}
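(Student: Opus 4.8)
The plan is to run the proof of \cref{prop:Vitali-V-L} essentially verbatim, substituting the Banach--Tarski ingredients assembled in this appendix for the Vitali ingredients used there. First I would record that the ambient objects are simple: by \cref{cor:S-D} the set $S^2\setminus D$ is $\lPi^0_2$, hence $\lDelta^1_1$, hence $\lDelta^1_2$; and the relation $E$ given by $\xbar \in F\cdot\ybar$ on $S^2\setminus D$, i.e.\ the restriction to $(S^2\setminus D)\times(S^2\setminus D)$ of the $\lSigma^0_2$ relation of \cref{lem:BT-equiv}, is the intersection of a $\lSigma^0_2$ set with the $\lPi^0_2$ (hence $\lDelta^1_1$) set $(S^2\setminus D)^2$, so by \cref{fact:Bool} it is $\lDelta^1_1 \subseteq \lDelta^1_2$. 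Since $F$ acts freely on $S^2\setminus D$, the $E$-classes are exactly the $F$-orbits, so a transversal of $E$ is precisely a set meeting each free $F$-orbit in one point. Viewing $S^2\setminus D$ as a $\lDelta^1_2$ subset of $\Rb^3$, \cref{lem:transversal} then yields, assuming $\VV=\LL$, a $\lPi^1_2$ transversal $A \subseteq S^2\setminus D$ of $E$.

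Next I would peel the definition of $\lPi^1_2$ apart into the required quantifier string, exactly as in \cref{prop:Vitali-V-L}. Since $A$ is $\lPi^1_2$, the complement $\Rb^3\setminus A$ is $\lSigma^1_2$, so by \cref{lem:replace-Baire} there is a $\lPi^1_1$ set $B' \subseteq \Rb^3\times\Rb$ with $\Rb^3\setminus A = \pi_{\Rb}(B')$, whence $A = \Rb^3\setminus\pi_{\Rb}(B')$. Then $(\Rb^3\times\Rb)\setminus B'$ is $\lSigma^1_1$, so by \cref{fact:effective-Suslin} there is a $\lPi^0_2$ set $C \subseteq \Rb^3\times\Rb\times\Rb$ with $(\Rb^3\times\Rb)\setminus B' = \pi_{\Rb}(C)$, i.e.\ $B' = (\Rb^3\times\Rb)\setminus\pi_{\Rb}(C)$. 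Finally, by \cref{defn:eff-Borel} there is a semicomputable set $U \subseteq \Rb^3\times\Rb\times\Rb\times\Nb = \Rb^3\times\Rb^2\times\Nb$ with $C = \{\langle\xbar,y,z\rangle : (\forall n\in\Nb)\,\langle\xbar,y,z,n\rangle\in U\}$; this fixed, program-definable $U$ is the set the proposition asks for, the hypothesis $\VV=\LL$ having entered only through the appeal to \cref{lem:transversal}. Tracing the quantifiers back through the three complement/projection steps gives, for $\xbar\in\Rb^3$,
\[
  \xbar\in A \iff (\forall y)\,\langle\xbar,y\rangle\notin B' \iff (\forall y)\,\langle\xbar,y\rangle\in\pi_{\Rb}(C) \iff (\forall y)(\exists z)(\forall n)\,\langle\xbar,y,z,n\rangle\in U,
\]
so under $\VV=\LL$ the set $\{\xbar : (\forall y\in\Rb)(\exists z\in\Rb)(\forall n\in\Nb)\,\langle\xbar,y,z,n\rangle\in U\}$ equals $A$, a $\lPi^1_2$ transversal of $E$ on $S^2\setminus D$, as required.

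The main obstacle is minimal: once the appendix lemmas (\cref{lem:D-Sigma-0-2}, \cref{cor:F-D}, \cref{cor:S-D}, \cref{lem:BT-equiv}) are in hand, the argument is purely definitional bookkeeping. The only points that warrant care are (i) confirming that restricting the $F$-orbit equivalence relation to $S^2\setminus D$ keeps it inside $\lDelta^1_2$, so that \cref{lem:transversal} genuinely applies, and (ii) checking that the alternation produced by the three successive applications of \cref{lem:replace-Baire}, \cref{fact:effective-Suslin}, and \cref{defn:eff-Borel} is exactly $\forall y\,(\text{over }\Rb)\;\exists z\,(\text{over }\Rb)\;\forall n\,(\text{over }\Nb)$ with no stray quantifier over Baire space — which is precisely what \cref{fact:effective-Suslin} and \cref{lem:replace-Baire} are invoked to eliminate.
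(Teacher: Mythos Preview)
Your proof is correct and follows essentially the same approach as the paper's own argument: cite \cref{cor:S-D} and \cref{lem:BT-equiv} to see that the equivalence relation on $S^2\setminus D$ has low effective complexity (the paper phrases it as $\lDelta^0_3$, you as $\lDelta^1_1$; either way it lands in $\lDelta^1_2$), then invoke \cref{lem:transversal}. You go further and spell out the unwinding of the resulting $\lPi^1_2$ set into the $\forall y\,\exists z\,\forall n$ quantifier string over a semicomputable $U$, which the paper leaves implicit by analogy with \cref{prop:Vitali-V-L}; your observation that $U$ is fixed unconditionally while $\VV=\LL$ is needed only for the transversal property is exactly right.
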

\begin{proof}
  By \cref{cor:S-D} and \cref{lem:BT-equiv}, we have that the restriction of this equivalence relation to $S^2 \setminus D$ is $\lDelta^0_3$ (and therefore $\lDelta^1_2$). The result then follows from \cref{lem:transversal}.
\end{proof}

Fix some such $\lPi^1_2$ transversal $T$ of the equivalence relation $x \in F \cdot y$ on $S^2 \setminus D$.

\begin{lem}
  There is a paradoxical decomposition $A_1$, $A_2$, $A_3$, $A_4$ of $F_2$ such that the sets $f^{-1}(A_i)$ are each computable (i.e., $\lDelta^0_1$ subsets of $\Nb$).
\end{lem}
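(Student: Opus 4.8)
The plan is to take the classical four-piece paradoxical decomposition of $F_2$ (as presented in, e.g., \cite{tomkowicz2016banach}) and simply verify that its pieces are cut out by decidable predicates on reduced words; since the enumeration $f$ lets us compute the reduced word of $f(n)$ uniformly in $n$, this immediately yields $f^{-1}(A_i) \in \lDelta^0_1$.

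Concretely, I would write $F_2 = \langle a, b \rangle$ and, for each symbol $\rho \in \{a, a^{-1}, b, b^{-1}\}$, let $W(\rho)$ be the set of nonempty reduced words whose first letter is $\rho$, so that $F_2 = \{e\} \sqcup W(a) \sqcup W(a^{-1}) \sqcup W(b) \sqcup W(b^{-1})$. Using the four sets $W(\rho)$ directly does not quite work, since the identities $F_2 = W(a) \sqcup a\,W(a^{-1})$ and $F_2 = W(b) \sqcup b\,W(b^{-1})$ both lay claim to $e$; the standard repair is to slide $e$ through the orbit $\{a^{-k} : k \geq 1\}$. So I would set
\begin{align*}
  A_1 &= W(a) \cup \{e\} \cup \{a^{-k} : k \geq 1\}, & A_2 &= W(a^{-1}) \setminus \{a^{-k} : k \geq 1\}, \\
  A_3 &= W(b), & A_4 &= W(b^{-1}).
\end{align*}
Then $A_1, A_2, A_3, A_4$ partition $F_2$, and, using $a\,W(a^{-1}) = F_2 \setminus W(a)$ together with $a\{a^{-k} : k \geq 1\} = \{e\} \cup \{a^{-k} : k \geq 1\}$, one checks that $A_1 \sqcup a A_2 = F_2$, while $A_3 \sqcup b A_4 = W(b) \sqcup b\,W(b^{-1}) = F_2$. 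Hence $A_1, A_2, A_3, A_4$, together with the translating elements $e, a, e, b$, form a paradoxical decomposition of $F_2$.

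For the computability claim, I would observe that each of the conditions ``$w = e$'', ``the first letter of $w$ is $\rho$'', and ``$w$ is a (possibly empty) power of $a^{-1}$'' is decidable from the reduced word $w$, so membership of a reduced word in each of $A_1, A_2, A_3, A_4$ is decidable; composing with the (computable) map $n \mapsto$ ``reduced word of $f(n)$'' then shows each $f^{-1}(A_i)$ is a computable, i.e.\ $\lDelta^0_1$, subset of $\Nb$. There is essentially no obstacle here: the only point needing care is that the textbook four-set decomposition double-counts the identity, and the usual fix (absorbing $e$ into the coset $\{a^{-k}\}$) keeps all four pieces defined by decidable word predicates, so it costs us nothing in complexity.
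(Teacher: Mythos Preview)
Your proposal is correct and follows essentially the same approach as the paper: both use the standard four-piece decomposition of $F_2$ by first letter of the reduced word, with the identity absorbed into the orbit of negative powers of one generator, and then observe that these membership conditions are decidable from the reduced word. The only difference is cosmetic---you put the absorption trick on the $a$-side (into $A_1,A_2$) whereas the paper puts it on the $\sigma$-side (into $A_3,A_4$).
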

\begin{proof}
  Let $\tau$ and $\sigma$ be the two generators of $F_2$. The four sets $(A_i)_{i \leq 4}$ given in \cite[Thm.~4.2]{tomkowicz2016banach} are defined as follows:
  \begin{itemize}
  \item $x \in A_1$ if and only if the first letter in the reduced word of $x$ is $\tau$.
  \item $x \in A_2$ if and only if the first letter in the reduced word of $x$ is $\tau^{-1}$.
  \item $x \in A_3$ if and only if the first letter in the reduced word of $x$ is $\sigma$ or $x$ is of the form $\sigma^{-n}$ for some $n \geq 0$.
  \item $x \in A_4$ if and only if the first letter in the reduced word of $x$ is $\sigma^{-1}$ and $x$ is not of the form $\sigma^{-n}$ for some $n$.
  \end{itemize}
  Each of these is clearly decidable from the word representation of $x$. This implies that each of the sets $f^{-1}(A_i)$ is decidable, since $f$ is a computable enumeration of $F_2$.
\end{proof}

Now by the standard argument, we get that the sets $(g(A_i)\cdot T)_{i \leq 4}$ are a paradoxical decomposition of $S^2 \setminus D$.

\begin{prop}\label{prop:BT-pieces-Pi-1-2}
  The sets $(g(A_i)\cdot T)_{i \leq 4}$ are each $\lPi^1_2$.
\end{prop}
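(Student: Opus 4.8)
The plan is to write each $g(A_i)\cdot T$ as a \emph{computable} countable union of rotated copies of the $\lPi^1_2$ transversal $T$, and then push $T$ through \cref{fact:function-sub} together with the $\Pi$-version of the closure properties in \cref{fact:Bool}. I would start from the computable enumeration $f : \Nb \to F_2$ and the isomorphism $g : F_2 \to F$ used in the proof of \cref{lem:D-Sigma-0-2}, recalling that $g \circ f$ is computable; since the elements of $F \subseteq SO(3)$ are orthogonal matrices with rational entries, the matrix $g(f(n))^{-1} = g(f(n))^{\mathsf T}$ is also computable from $n$, so, exactly as in the proof of \cref{cor:F-D}, the map $h : \Rb^3 \times \Nb \to \Rb^3$ given by $h(\ybar,n) = g(f(n))^{-1}\cdot \ybar$ is computable, hence $\lDelta^1_1$.

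The next step records that $\{\ybar \in \Rb^3 : h(\ybar,n) \in T\} = g(f(n))\cdot T$, because $g(f(n))$ is invertible: $\ybar \in g(f(n))\cdot T$ iff $g(f(n))^{-1}\cdot\ybar \in T$ iff $\langle \ybar, n\rangle \in h^{-1}(T)$. Since $f$ is onto $F_2$ and $g$ is a bijection, $g(A_i) = \{g(f(n)) : n \in f^{-1}(A_i)\}$, so $g(A_i)\cdot T = \bigcup_{n \in f^{-1}(A_i)} g(f(n))\cdot T$. Setting $B_i = h^{-1}(T) \cap \{\langle \ybar,n\rangle \in \Rb^3\times\Nb : n \in f^{-1}(A_i)\}$, this becomes $g(A_i)\cdot T = \bigcup_{n\in\Nb}\{\ybar \in \Rb^3 : \langle \ybar,n\rangle \in B_i\}$.

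Then it is just bookkeeping. By \cref{fact:function-sub}, $h^{-1}(T)$ is $\lPi^1_2$ since $T$ is $\lPi^1_2$ and $h$ is $\lDelta^1_1$. The set $\{\langle\ybar,n\rangle : n \in f^{-1}(A_i)\}$ is $\lDelta^0_1$ — because $f^{-1}(A_i)$ is a computable subset of $\Nb$ by the preceding lemma — and hence $\lPi^1_2$, so $B_i$ is $\lPi^1_2$ by the finite-intersection clause of \cref{fact:Bool}. Finally, the $\Pi$-analog of the ``computable countable union'' clause of \cref{fact:Bool} (the remark immediately following it) gives that $g(A_i)\cdot T = \bigcup_{n\in\Nb}\{\ybar : \langle\ybar,n\rangle \in B_i\}$ is $\lPi^1_2$.

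I do not expect a genuine obstacle here; the one point that actually needs care is that the union defining $g(A_i)\cdot T$ is indexed by a variable ranging over $\Nb$ rather than over $\Rb$. A careless ``countable union of $\lPi^1_2$ sets'' would not be legitimate in the lightface setting without the uniformity supplied by the single computable map $h$ — equivalently, projecting out a real variable would only yield a $\lSigma^1_3$ bound, whereas the natural-number quantifier appearing here is exactly the kind of ``computable'' quantifier that \cref{fact:Bool} lets us absorb for free.
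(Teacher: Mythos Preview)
Your proposal is correct and follows essentially the same route as the paper: define the computable map $h(\ybar,n)=g(f(n))^{-1}\cdot\ybar$, pull back $T$ via \cref{fact:function-sub}, intersect with the $\lDelta^0_1$ set coming from the computable $f^{-1}(A_i)$, and project out the $\Nb$-coordinate using the $\Pi$-analog of \cref{fact:Bool}. Your explicit remark that the projection is over $\Nb$ rather than $\Rb$ is exactly the point; the paper's proof is terser but identical in structure.
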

\begin{proof}
  The function $h(\xbar,n) = g(f(n))^{-1}\cdot \xbar$ is computable, so the set $h^{-1}(T) = \{\langle g(f(n))\cdot\xbar,n \rangle : \xbar \in T,~n \in \Nb\} \subseteq \Rb^3 \times \Nb$ is $\lPi^1_2$ by \cref{fact:function-sub}.
  Each of the sets $B_i = \{\langle \xbar, n \rangle : \xbar \in \Rb^3,~n \in f^{-1}(A)\}$ is $\lDelta^0_1$. Therefore, the sets $h^{-1}(T) \cap B_i$ are each $\lPi^1_2$. By \cref{fact:Bool}, this implies that the projections $\pi(h^{-1}(T) \cap B_i) \subseteq \Rb^3$ are $\lPi^1_2$.
\end{proof}

\begin{cor}\label{cor:paradox-Pi}
  There is a specific sequence of $16$ $\lPi^1_2$ sets which, under $\VV = \LL$, are a paradoxical decomposition of $S^2$.
\end{cor}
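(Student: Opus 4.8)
The plan is to carry out the standard device for upgrading a paradoxical decomposition of $S^2 \setminus D$ to one of $S^2$, while keeping careful track of both the number of pieces and their descriptive complexity. By \cref{prop:BT-pieces-Pi-1-2} the four sets $W_i \coloneq g(A_i) \cdot T$ are $\lPi^1_2$, and by the standard argument recalled just before that proposition they partition $S^2 \setminus D$ and satisfy $W_1 \sqcup \rho W_2 = S^2 \setminus D = W_3 \sqcup \phi W_4$; in other words, assuming $\VV = \LL$ (so that \cref{prop:BT-V-L} makes $T$ a genuine transversal), $(W_i)_{i \leq 4}$ is a paradoxical decomposition of $S^2 \setminus D$ with all four pieces $\lPi^1_2$. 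So all that remains is to absorb the countable set $D$.

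First I would make the equidecomposition $S^2 \setminus D \sim S^2$ completely explicit. Since the sets $(\alpha^n \cdot D)_{n \in \Nb}$ are pairwise disjoint, $\alpha \cdot D^\ast = D^\ast \setminus D$, so the bijection $\psi : S^2 \setminus D \to S^2$ that is the identity on $S^2 \setminus D^\ast$ and equal to $\alpha^{-1}$ on $D^\ast \setminus D$ is a two-piece piecewise-isometry with isometries drawn from $\{\mathrm{id}, \alpha^{-1}\}$. By \cref{lem:D-Sigma-0-2} and \cref{cor:F-D}, $D$ and $D^\ast$ are $\lSigma^0_2$ (hence $\lDelta^1_1$), and $\alpha^{\pm 1}$, $\rho$, and $\phi$ all act on $\Rb^3$ computably, so by \cref{fact:function-sub} the preimage of $D^\ast$ — or of $S^2 \setminus D^\ast$ — under any finite composition of these rotations is again $\lDelta^1_1$. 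Now put $B_1 \coloneq \psi(W_1 \sqcup W_2)$ and $B_2 \coloneq \psi(W_3 \sqcup W_4)$; these are disjoint and, since $\psi$ is a bijection with $\bigsqcup_{i \leq 4} W_i = S^2 \setminus D$, they partition $S^2$, and each is $\lPi^1_2$ by the remarks above together with \cref{fact:Bool}. The composite bijection $B_1 \to S^2$ obtained by applying $\psi^{-1}$, then the map that is $\mathrm{id}$ on $W_1$ and $\rho$ on $W_2$, then $\psi$, is a concatenation of three two-piece piecewise-isometries, so refining $B_1$ along all three cut loci (the one coming from $D^\ast$ in $\psi^{-1}$, the one separating $W_1$ from $W_2$, and the one coming from $D^\ast$ in $\psi$) writes $B_1$ as a disjoint union of at most $2^3 = 8$ sets, each carried onto a piece of a partition of $S^2$ by a single element of $SO(3)$ (a product of elements of $\{\mathrm{id}, \alpha^{\pm 1}, \rho\}$). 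Doing the same for $B_2$ with $\phi$ in place of $\rho$ yields $16$ sets in total that partition $S^2$ and constitute a paradoxical decomposition of it. Each of these $16$ sets is obtained from the $\lPi^1_2$ sets $W_i$ by finitely many intersections with the $\lDelta^1_1$ sets described above and by applying a computable rotation, so it is $\lPi^1_2$ by \cref{fact:Bool} and \cref{fact:function-sub}; crucially no step ever complements a $\lPi^1_2$ set (only $\lDelta^1_1$ sets get complemented).

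The only real work is the bookkeeping in the second step: one has to verify that the three refinements genuinely combine into a partition of $S^2$ on which the stated group elements act as claimed, and count that this uses at most $8 + 8 = 16$ pieces. This is the standard fixed-point-absorption argument — essentially the one in \cite{tomkowicz2016banach} made effective, in the same spirit in which \cref{prop:Vitali-V-L} makes the usual Vitali construction effective — and I expect no surprises, only a somewhat fiddly diagram chase. Once that is done, \cref{fact:Bool} and \cref{fact:function-sub} give the complexity bound for free, and \cref{prop:BT-V-L} guarantees that under $\VV = \LL$ the construction really does produce a paradoxical decomposition rather than a vacuous one.
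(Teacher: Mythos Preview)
Your proposal is correct and follows essentially the same route as the paper: both absorb the fixed-point set $D$ via the standard Hilbert-hotel shift along $\alpha$, compose this two-piece equidecomposition on each side of the four-piece paradoxical decomposition of $S^2\setminus D$, and observe that each of the resulting $2\times 4\times 2=16$ pieces is an intersection of one $\lPi^1_2$ set $g(A_i)\cdot T$ with $\lDelta^1_1$ (rotated $D^\ast$/complement) sets. The paper's proof is simply a terser statement of exactly the bookkeeping you spell out.
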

\begin{proof}
  Each of the pieces in the decomposition can be written as an intersect of a computable rotation of $S^2 \setminus D^\ast$ or $D^\ast$, $g(A_i)\cdot T$ for some $i \leq 4$, and another computable rotation of $S^2 \setminus D^\ast$ or $D^\ast$, giving $16$ combinations. By \cref{fact:Bool} and Corollaries~\ref{cor:F-D} and \ref{cor:S-D}, these pieces are all $\lPi^0_2$.
\end{proof}

This can almost certainly be improved to four pieces, the known optimal size of a paradoxical decomposition of $S^2$, by directly adapting the proof of \cite[Thm.~4.5]{tomkowicz2016banach}. There is also no fundamental extra subtlety in writing out a $\lPi^1_2$ decomposition of the full unit ball, rather than just the unit sphere.

%


\bibliographystyle{plain}
\bibliography{../ref}

\end{document}